\documentclass[a4paper,11pt]{amsart}
\usepackage[left=2.7cm,right=2.7cm,top=3.5cm,bottom=3cm]{geometry}

\usepackage{graphicx}
\newcommand{\Bmu}{\mbox{$\raisebox{-0.59ex}
  {$l$}\hspace{-0.18em}\mu\hspace{-0.88em}\raisebox{-0.98ex}{\scalebox{2}
  {$\color{white}.$}}\hspace{-0.416em}\raisebox{+0.88ex}
  {$\color{white}.$}\hspace{0.46em}$}{}}

\usepackage{amsthm,amssymb,amsmath,amsfonts,mathrsfs,amscd,dsfont}
\usepackage[latin1]{inputenc}
\usepackage[all,cmtip]{xy}
\usepackage{latexsym}
\usepackage{longtable}
\usepackage{mathtools}

\numberwithin{equation}{section}

\usepackage[pagebackref]{hyperref}

\mathtoolsset{showonlyrefs}


\newfont{\cyr}{wncyr10 scaled 1100}
\newfont{\cyrr}{wncyr9 scaled 1000}

\theoremstyle{plain}
\newtheorem{theorem}{Theorem}[section]
\newtheorem*{theoremA}{Theorem A}
\newtheorem*{theoremB}{Theorem B}
\newtheorem{proposition}[theorem]{Proposition}
\newtheorem{lemma}[theorem]{Lemma}
\newtheorem{corollary}[theorem]{Corollary}
\newtheorem{conjecture}[theorem]{Conjecture}

\newcommand{\defeq}{\vcentcolon=}

\theoremstyle{definition}
\newtheorem{definition}[theorem]{Definition}
\newtheorem{assumption}[theorem]{Assumption}

\theoremstyle{remark}
\newtheorem{remark}[theorem]{Remark}

\newtheorem{notationt}[theorem]{Notation and terminology}

\newcommand{\Q}{\mathds Q}
\newcommand{\N}{\mathds N}
\newcommand{\Z}{\mathds Z}

\newcommand{\C}{\mathds C}

\newcommand{\F}{\mathds F}
\newcommand{\T}{\mathds T}
\newcommand{\V}{\mathds V}
\newcommand{\GG}{\mathds G}


\DeclareMathOperator{\Spec}{Spec}
\DeclareMathOperator{\Pic}{Pic}
\DeclareMathOperator{\End}{End}

\DeclareMathOperator{\Frob}{Frob}

\DeclareMathOperator{\Hom}{Hom}

\DeclareMathOperator{\Gal}{Gal}
\DeclareMathOperator{\GL}{GL}

\DeclareMathOperator{\SL}{SL}
\DeclareMathOperator{\Sel}{Sel}

\DeclareMathOperator{\CH}{CH}

\DeclareMathOperator{\im}{im}


\newcommand{\res}{\mathrm{res}}
\newcommand{\cores}{\mathrm{cores}}

\newcommand{\tr}{\mathrm{tr}}
\newcommand{\ord}{\mathrm{ord}}
\newcommand{\new}{\mathrm{new}}

\newcommand{\cont}{\mathrm{cont}}

\newcommand{\an}{\mathrm{an}}
\newcommand{\alg}{\mathrm{alg}}
\newcommand{\cla}{\mathrm{cl}}


\newcommand{\Sha}{\mbox{\cyr{X}}}

\usepackage[usenames]{color}
\definecolor{Indigo}{rgb}{0.2,0.1,0.7}
\definecolor{Violet}{rgb}{0.5,0.1,0.7}
\definecolor{White}{rgb}{1,1,1}
\definecolor{Green}{rgb}{0.1,0.9,0.2}

\newcommand{\longmono}{\mbox{\;$\lhook\joinrel\longrightarrow$\;}}

\newcommand{\longepi}{\mbox{\;$\relbar\joinrel\twoheadrightarrow$\;}}





\newcommand{\E}{\mathcal E}

\newfont{\gotip}{eufb10 at 12pt}

\newcommand{\cO}{{\mathcal O}}

\newcommand{\spe}{\mathrm{sp}}

\newcommand{\m}{\mathfrak{m}}
\newcommand{\p}{\mathfrak{p}}
\newcommand{\fP}{\mathfrak{P}}

\newcommand{\hf}{\boldsymbol f^{(p)}}

\DeclareMathOperator{\GS}{GS}
\DeclareMathOperator{\Ta}{Ta}

\DeclareMathOperator{\rank}{rank}

\DeclareMathOperator{\f}{\boldsymbol f}

\newcommand{\KK}{\mathscr{K}}

\setcounter{tocdepth}{1}

\include{thebibliography}

\begin{document}

\title[On Shafarevich--Tate groups and analytic ranks in Coleman families]{On Shafarevich--Tate groups and analytic ranks\\in families of modular forms, II. Coleman families}
\author{Maria Rosaria Pati, Gautier Ponsinet and Stefano Vigni}

\thanks{}

\begin{abstract}
This is the second article in a two-part project whose aim is to study algebraic and analytic ranks in $p$-adic families of modular forms. Let $f$ be a newform of weight $2$, square-free level $N$ and trivial character, let $A_f$ be the abelian variety attached to $f$, whose dimension will be denoted by $d_f$, and for every prime number $p\nmid N$ let $\hf$ be a $p$-adic Coleman family through $f$ over a suitable open disc in the $p$-adic weight space. We prove that, for all but finitely many primes $p$ as above, if $A_f(\Q)$ has rank $r\in\{0,d_f\}$ and the $p$-primary part of the Shafarevich--Tate group of $A_f$ over $\Q$ is finite, then all classical specializations of $\hf$ of weight congruent to $2$ modulo $2(p-1)$ and trivial character have finite $p$-primary Shafarevich--Tate group and $r/d_f$-dimensional image of the relevant $p$-adic \'etale Abel--Jacobi map. As a second contribution, assuming the non-degeneracy of certain height pairings \emph{\`a la} Gillet--Soul\'e between Heegner cycles, we show that, for all but finitely many $p$, if $f$ has analytic rank $r\in\{0,1\}$, then all classical specializations of $\hf$ of weight congruent to $2$ modulo $2(p-1)$ and trivial character have analytic rank $r$. This result provides some evidence for a conjecture of Greenberg on analytic ranks in families of modular forms.
\end{abstract}

\address{Laboratoire de Math\'ematiques Nicolas Oresme (LMNO), UMR 6139, Normandie Universit\'e, Universit\'e de Caen Normandie -- CNRS, 14000 Caen, France}
\email{maria-rosaria.pati@unicaen.fr}

\address{Mathematisches Institut, Universit\"at Heidelberg, Im Neuenheimer Feld 205, 69120 heidelberg, Germany}
\email{gponsinet@mathi.uni-heidelberg.de}

\address{Dipartimento di Matematica, Universit\`a di Genova, Via Dodecaneso 35, 16146 Genova, Italy}
\email{stefano.vigni@unige.it}

\subjclass[2020]{11F11 (primary), 14C25 (secondary)}

\keywords{Modular forms, Coleman families, big Galois representations, big Heegner points, $L$-functions, Shafarevich--Tate groups.}

\maketitle


\section{Introduction}

The present article completes a two-part project whose goal is to study certain arithmetic invariants of modular forms (algebraic ranks, analytic ranks, Shafarevich--Tate groups) when the modular forms they are attached to vary in a $p$-adic family. More precisely, we extend the results of \cite{Vigni}, which deals with Hida (\emph{i.e.}, slope $0$) families, to Coleman (\emph{i.e.}, finite slope) families. A general introduction, in the context of Hida families, to the questions that are studied in these two papers is given in \cite{Vigni}, so here we content ourselves with describing the main results of this article.

Let $f$ be a newform of weight $2$, square-free level $N$ and trivial character, and for every prime number $p\nmid N$ let $\hf_U$ be a $p$-adic Coleman family passing through $f$ (or, rather, through a suitable $p$-stabilization of $f$) over an open disc $U$ in the $p$-adic weight space $\mathscr W_N$ of tame level $N$. In this paper we are exclusively interested in the specializations $f_k$ of $\hf_U$ at even integers $k\in U\cap\mathscr W_{N,\cla}$, where $\mathscr W_{N,\cla}$ denotes the subset of $\mathscr W_N$ consisting of classical weights. We warn the reader that in this introduction we ignore the phenomenon of ``$p$-stabilization'' that we alluded to before; thus, whenever we speak of an invariant associated with $f_k$ we tacitly understand that this notion refers to the newform of level $N$ whose $p$-stabilization coincides with $f_k$.

Let $\bar\Z$ denote the ring of integers in a fixed algebraic closure $\bar\Q$ of $\Q$ and choose a prime ideal $\fP$ of $\bar\Z$ above $p$. Furthermore, write $\cO_{\Q_{f_k}}$ for the ring of integers of the Hecke field of $f_k$ and $\cO_{\Q_{f_k},\fP}$ for its completion at $\fP$. To every specialization $f_k$ of $\hf_U$ as above one can attach, among others, three basic arithmetic invariants:
\begin{itemize}
\item its \emph{algebraic $\fP$-rank}, \emph{i.e.}, the rank $r_{\alg,\fP}(f_k)$ over $\cO_{\Q_{f_k},\fP}$ of the image of the $\fP$-adic \'etale Abel--Jacobi map associated with $f_k$;
\item its \emph{$\fP$-primary Shafarevich--Tate group} $\Sha_\fP(f_k/\Q)$; 
\item its \emph{analytic rank}, \emph{i.e.}, the order of vanishing $r_\an(f_k)$ at $s=k/2$ of the complex $L$-function of $f_k$.
\end{itemize}
Clearly, the first two invariants are algebraic, whereas the third is of an analytic nature. These invariants are expected to encode a huge amount of information on the arithmetic of the corresponding modular form, as epitomized by the conjectures of Birch--Swinnerton-Dyer (in weight $2$) and of Beilinson--Bloch--Kato (in higher weight). Our goal in this paper is to study the behaviour of these invariants as $k$ varies over (a suitable subset of) the classical even weights in the open disc $U$ over which our Coleman family is defined.

Let $A_f$ be the abelian variety over $\Q$ of $\GL_2$-type attached to $f$ by the Eichler--Shimura construction, write $d_f$ for the dimension of $A_f$ and let $\Sha_\fP(A_f/\Q)$ be the $\fP$-primary part of the Shafarevich--Tate group of $A_f$ over $\Q$. Our first main result, which concerns algebraic ranks and Shafarevich--Tate groups, is the following.

\begin{theoremA} 
Suppose that the rank of $A_f(\Q)$ is $r\in\{0,d_f\}$. For all but finitely many primes $p$, if $\Sha_\fP(A_f/\Q)$ is finite, then all specializations $f_k$ of $\hf_U$ with $k\in U\cap\mathscr W_{N,\cla}$ such that $k\geq4$ and $k\equiv2\pmod{2(p-1)}$ satisfy $r_{\alg,p}(f_k)=r/d_f$ and $\#\Sha_\fP(f_k/\Q)<\infty$. 
\end{theoremA}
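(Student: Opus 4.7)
The plan is to propagate rank and Shafarevich--Tate information from $f$ to each higher-weight specialization $f_k$ via an Euler system of big Heegner classes attached to the Coleman family $\hf_U$, following the strategy used for Hida families in \cite{Vigni} but replacing Howard's big Heegner points by a finite-slope construction (\emph{e.g.}\ Longo--Vigni, B\"uy\"ukboduk--Lei, or Castella). First I would fix an auxiliary imaginary quadratic field $K/\Q$ in which every prime dividing $Np$ splits as dictated by the Heegner hypothesis, chosen so that the generic sign of the functional equation of the base-changed family matches the parity of $r$ (sign $+1$ when $r=0$, sign $-1$ when $r=d_f$). The $\Gal(K/\Q)$-eigendecomposition of cohomology then reduces the statement over $\Q$ to an analogous statement for $f_k/K$, which is the natural setting for Heegner-type methods.

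Next, I would invoke the existence of a big Heegner class $\mathfrak Z_U\in H^1(K,\mathbb T_U)$ along $\hf_U$ whose specialization $z_k$ at weight $k$ equals, up to an explicit nonzero $p$-adic interpolation factor, the $\fP$-adic \'etale Abel--Jacobi image of a generalized Heegner cycle on the Kuga--Sato variety attached to $f_k$. The weight $2$ specialization $z_2$ is a scalar multiple of the Kummer class of the Heegner point on $A_f$; under the hypotheses of the theorem, Kolyvagin's theorem combined with the finiteness of $\Sha_{p^\infty}(A_f/\Q)$ already pins down the $p$-Selmer corank over $K$ and forces $z_2$ to be either non-torsion (in the case $r=d_f$) or torsion with non-trivial derived Kolyvagin classes certifying finiteness of $\Sha$ (in the case $r=0$). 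This is the weight-$2$ input.

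I would then transfer this control to weight $k$. The congruence $k\equiv 2\pmod{2(p-1)}$ places $k$ in the same connected component of the residue disc around $2$ in which the big Galois representation $\mathbb T_U$ is free over the affinoid algebra $\BK_U$ of $U$, so the specialization map is well-behaved and the residual representations of $f_k$ and $f$ coincide. A control theorem for $\mathfrak Z_U$, together with the weight-$2$ input, gives the required non-triviality (resp.\ triviality with non-trivial derived classes) of $z_k$. The Kolyvagin--Nekov\'a\v r Euler system argument for higher-weight modular forms (in the refined form needed to bound the Bloch--Kato Selmer group $H^1_f(\Q,V_\fP(f_k)(k/2))$) then implies that the $\fP$-adic \'etale Abel--Jacobi map of $f_k$ has image of $\cO_{f_k,\fP}$-rank exactly $r/d_f$ and that $\Sha_\fP(f_k/\Q)$ is finite.

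The main obstacle is the non-triviality of $z_k$ for \emph{every} (not merely almost every) weight $k$ in the congruence class modulo $2(p-1)$. This amounts to a divisibility question for $\mathfrak Z_U$ inside $H^1(K,\mathbb T_U)$: one must rule out that $\mathfrak Z_U$ lies in the kernel of the specialization map at weight $k$, which requires the freeness of $\mathbb T_U$ over $\BK_U$ in the relevant component together with an injectivity statement for the induced map on Galois cohomology. Excluding the primes $p$ dividing the Manin constant, the congruence ideal, the Tamagawa numbers, the Heegner index, and primes at which the residual representation is reducible or has small image leaves the cofinite set of $p$ for which the full argument is valid.
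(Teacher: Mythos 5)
Your high-level outline — auxiliary imaginary quadratic field, interpolation of Heegner classes \`a la B\"uy\"ukboduk--Lei, specialization at weight $k$, Nekov\'a\v{r}'s Euler system machinery for Kuga--Sato varieties — is essentially the route the paper takes. The weight-$2$ input, however, is where the proposal goes off course in two substantive ways.

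First, the treatment of the $r=0$ case is not correct. You propose to choose $K$ so that the sign of the base-changed functional equation matches the parity of $r$, i.e.\ $+1$ when $r=0$, and to then conclude that $z_2$ is torsion with ``non-trivial derived Kolyvagin classes'' that you would somehow propagate to weight $k$. But under the Heegner hypothesis relative to $N$ the sign of $L(f/K,s)$ is \emph{always} $-1$, so this choice is not available, and in any case there is no mechanism in the paper (or, as far as I can see, in any variant of the argument) for transferring ``derived classes'' along the family. The paper instead arranges, in \emph{both} cases $r=0$ and $r=d_f$, that $r_\an(f\otimes\chi_K)=1-r/d_f$ (using Waldspurger and Bump--Friedberg--Hoffstein), so that $r_\an(f/K)=1$ and the Gross--Zagier formula makes the Heegner point $\alpha_K$ non-torsion over $K$ in both cases. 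The entire difference between rank $0$ and rank $1$ over $\Q$ is then which $\Gal(K/\Q)$-eigenspace the non-torsion class lives in upon descent, a point handled by the comparison results \cite[Propositions 5.8, 5.10 and Lemma 5.26]{Vigni}. Your version, in which $z_2$ is torsion when $r=0$, deprives the higher-weight specialization argument of its input: a torsion class specializes to a torsion class and Theorem~\ref{BL-thm} then gives nothing.

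Second, you invoke ``Kolyvagin's theorem combined with the finiteness of $\Sha_{p^\infty}(A_f/\Q)$'' to deduce the behaviour of $z_2$. Kolyvagin's theorem runs the wrong way for this: it goes from a non-torsion Heegner class to a bound on Selmer groups, not from algebraic data to non-triviality of the Heegner class. What is actually needed is a \emph{converse} to Kolyvagin--Gross--Zagier: from $\rank_\Z A_f(\Q)=r$ and $\#\Sha_{p^\infty}(A_f/\Q)<\infty$ deduce $r_\an(f)=r/d_f$. The paper uses Sweeting's converse theorem when $r=d_f$ (Theorem~\ref{sweeting-thm}) and Wan's when $r=0$ (Theorem~\ref{wan-thm}); these are precisely where the square-free level and the big-image hypotheses on $\bar\rho_{f,\p}$ enter. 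Without this step the hypotheses of Theorem A never convert into the analytic-rank statement that makes $\alpha_K$ non-torsion, and the remainder of your argument has nothing to act on.

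A smaller point: the congruence $k\equiv 2\pmod{2(p-1)}$ is not about freeness of $\T_{\f_U}^\dag$ over a connected component of the disc. It is imposed so that $\bar\varepsilon_{\mathrm{cyc}}^{1-k/2}$ is trivial, giving $\bar T_{f_k}^{*,\dag}=\bar T_{f_k}^*$ and hence the identification $\bar T_f^\dag\simeq\bar T_{f_k}^\dag$ that makes the diagram chase in the proof of Theorem~\ref{heegner-nontrivial-prop} go through. Freeness of the big Galois module is a separate matter, handled by passing to the reflexive hull $\V^\circ_{\f_U}$ and shrinking $U$.
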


This result corresponds in the text to Theorem \ref{sha-thm} if $\mathrm{rank}_\Z\, A_f(\Q)=d_f$ and to Theorem \ref{sha-thm2} if $\mathrm{rank}_\Z\, A_f(\Q)=0$.

As far as our second main contribution is concerned, one of the motivations behind it is provided by a conjecture of Greenberg (\cite{Greenberg-CRM}), a special case of which predicts that the analytic ranks of the even weight specializations $f_k$ of $\hf_U$ should be as small as allowed by the functional equation, with at most finitely many exceptions. We note that, as for the analogous result in \cite{Vigni} for Hida families, here we need to assume the non-degeneracy of certain height pairings \emph{\`a la} Gillet--Soul\'e that have been introduced by S.-W. Zhang in \cite{Zhang-heights} to prove a counterpart for higher (even) weight modular forms of the Gross--Zagier formula. Our result can be stated as follows.

\begin{theoremB} 
Suppose that the analytic rank of $f$ is $r\in\{0,1\}$ and that the height pairing in Zhang's formula is non-degenerate. For all but finitely many primes $p$, all specializations $f_k$ of $\hf_U$ with $k\in U\cap\mathscr W_{N,\cla}$ such that $k\geq4$ and $k\equiv2\pmod{2(p-1)}$ have analytic rank $r$.
\end{theoremB}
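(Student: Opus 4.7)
The plan is to split into the cases $r=0$ and $r=1$ and, in each, bound $r_\an(f_k)$ from above; the lower bound comes from the sign of the functional equation, which is constant along the family of even-weight specializations at tame level $N$ with trivial character (in particular, when $r=1$ the sign is $-1$, forcing $L(f_k,k/2)=0$ and hence $r_\an(f_k)\ge 1$). The content of the theorem is therefore a non-vanishing statement for the relevant central critical value or for its first derivative, derived from a $p$-adic analytic object attached to $\hf_U$ whose non-vanishing at $k=2$ propagates to all but finitely many classical $k\in U$.

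For $r=0$, the natural tool is the two-variable $p$-adic $L$-function $\mathscr{L}_p(\hf_U)$ attached to the Coleman family, restricted to the central critical line $s=k/2$ to obtain a one-variable rigid analytic function $g(k) := \mathscr{L}_p(\hf_U)(k,k/2)$ on $U$. The interpolation property identifies $g(2)$, up to an explicit Euler-type factor at $p$, with the non-zero complex central value $L(f,1)$. Outside a finite set of primes for which this factor vanishes or the $p$-adic $L$-function construction behaves badly, one obtains $g(2)\neq 0$, so $g\not\equiv 0$ as a rigid analytic function on $U$. Its zero set in $U$ is then finite, and for all remaining classical weights $k$ (in particular those satisfying $k\equiv 2\pmod{2(p-1)}$) the interpolation formula gives $L(f_k,k/2)\neq 0$, hence $r_\an(f_k)=0$.

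For $r=1$, I would exploit a $\Lambda$-adic big Heegner class $\mathfrak{Z}$ along $\hf_U$, constructed in the style of Howard and its extensions to Coleman families by Longo--Vigni, Büyükboduk and others. Its classical specialization at weight $k$ is (the image under the $\fP$-adic \'etale Abel--Jacobi map of) a Heegner cycle $z_k$ on the relevant Kuga--Sato variety attached to $f_k$, with $z_2$ proportional to the classical Heegner point on $A_f$. Since $L'(f,1)\neq 0$, the Gross--Zagier formula makes this Heegner point non-torsion. Zhang's higher-weight Gross--Zagier formula then identifies $L'(f_k,k/2)$ with a non-zero multiple of the Gillet--Soul\'e self-height $\langle z_k,z_k\rangle_{\GS}$. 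The decisive step is to show that $k\mapsto\langle z_k,z_k\rangle_{\GS}$ admits a rigid analytic interpolation on $U$: this follows from the interpolation property of $\mathfrak Z$ combined with a $p$-adic variational analysis of the height pairing along the family. Given the assumed non-degeneracy of the pairing, the interpolated function is non-zero at $k=2$, hence has finite zero set on $U$, and for all other classical $k$ satisfying the congruence condition one gets $L'(f_k,k/2)\neq 0$; together with the parity constraint from the functional equation this yields $r_\an(f_k)=1$.

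The main obstacle lies in the $r=1$ case, namely in producing the rigid analytic interpolation of $k\mapsto\langle z_k,z_k\rangle_{\GS}$ along the Coleman family and its compatibility with the specialization of the big Heegner class. Realising this either requires a $p$-adic analogue of Zhang's formula suited to Coleman families (expressing the height as a derivative of a $p$-adic $L$-function whose analyticity is already known), or a direct variational study of the Gillet--Soul\'e pairing applied to the $p$-adically varying Heegner cycles $z_k$. One must also verify that the big Heegner class specializes at $k=2$ to a non-zero multiple of the classical Heegner point on $A_f$, matching the realization used in the Gross--Zagier formula. The finite set of excluded primes $p$ has to be chosen large enough to absorb vanishing of interpolation factors, the availability of the big Heegner class construction, the assumed non-degeneracy of the height pairings in the family, and the technical hypotheses underlying Theorem A.
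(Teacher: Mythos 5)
Your $r=1$ argument has a genuine gap, and it is precisely the one you flag as ``the main obstacle'': there is no known rigid analytic interpolation of the Gillet--Soul\'e height $k\mapsto\langle z_k,z_k\rangle_{\GS}$ along a Coleman family, and the paper never constructs one. The paper's route is entirely different. Rather than proving the height is non-zero for all but finitely many $k$ by analytic continuation, one proves that the specialized Heegner cycle $y_{k,K}$ is non-torsion for \emph{every} admissible $k\geq4$ with $k\equiv2\pmod{2(p-1)}$, by reducing modulo $p$: the Heegner point $\alpha_K\in A_f(K)$ is chosen non-trivial modulo $\p$ (this is part of what defines the set $\Xi_f$ of allowed primes), the B\"uy\"ukboduk--Lei big class $\zeta_{\f_U,K}$ interpolates the Heegner cycles integrally, and the congruence $k\equiv2\pmod{2(p-1)}$ is exactly what makes $\bar\varepsilon_{\cyc}^{1-k/2}$ trivial so that the residual lattices $\bar T_{f_k}^\dagger$ all coincide with $\bar T_f^\dagger$. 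Combined with torsion-freeness of $H^1\bigl(K,T_{f_k^\flat}^\dagger\bigr)$ over solvable extensions, non-vanishing of the mod-$p$ reduction forces $y_{k,K}$ to be non-torsion. Non-degeneracy of Zhang's pairing then gives $L'(f_k^\flat/K,k/2)\neq0$ directly for each such $k$, and descent to $\Q$ uses the factorization $L(f_k^\flat/K,s)=L(f_k^\flat,s)L(f_k^\flat\otimes\chi_K,s)$. Your sketch never uses the congruence condition on $k$, yet without it the mod-$p$ identification of residual representations fails and the argument cannot start; conversely, if one really had a rigid analytic interpolation of the height, the congruence condition would be superfluous. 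The Heegner cycle and Gross--Zagier input also live over the auxiliary imaginary quadratic field $K$, not over $\Q$, and your sketch elides this base change and descent.

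Your $r=0$ argument via the two-variable $p$-adic $L$-function is a valid idea, but it is not how the paper proves the main-body statement. There the $r=0$ case is deduced from the $r=1$ case by a quadratic twist: one picks $K$ with $r_\an(f\otimes\chi_K)=1$, applies the $r=1$ argument to the twisted Coleman family $\hf_U\otimes\chi_K$ (introducing a second auxiliary field $K'$ for the twisted form), and combines $r_\an\bigl((f_k\otimes\chi_K)^\flat\bigr)=1$ with $r_\an(f_k^\flat/K)=1$ to conclude $r_\an(f_k^\flat)=0$. The $p$-adic $L$-function route you outline does appear, but only as a refinement in the appendix (Theorem \ref{main-appendix-thm}); that version drops both the congruence condition and the non-degeneracy hypothesis, at the cost of excluding the trivial zeros of the two-variable $p$-adic $L$-function --- a condition on $k$, not on $p$, which your sketch does not track.
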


It is worth remarking that this theorem, which we prove in \S \ref{B-1-subsubsec} if $r_\an(f)=1$ and in \S \ref{B-0-subsubsec} if $r_\an(f)=0$, offers some evidence for the aforementioned conjecture by Greenberg.

Our strategy for proving Theorems A and B, which is inspired by \cite{Vigni}, goes as follows. Under the assumption that $r_\an(f)\in\{0,1\}$, we introduce two sets $\Xi_f$, $\Omega_f$ of prime numbers (\S \ref{choice-subsubsec}, \S \ref{newform-0-subsubsec}). Both sets, which are defined in terms of, among other conditions, the non-triviality modulo $p$ of the imaginary quadratic Heegner point on $A_f$ appearing in the Gross--Zagier formula, consist of all but finitely many primes. Using recent results of B\"uy\"ukboduk--Lei on the interpolation of (generalized) Heegner cycles in Coleman families (\cite{BL}), we obtain our key technical result: for any $k\in U\cap\mathscr W_{N,\cla}$ such that $k\geq4$ and $k\equiv2\pmod{2(p-1)}$, the imaginary quadratic Heegner cycle $y_k$ (to be denoted by $y_{k,K}$ in the main body of the paper) that was originally defined by Nekov\'a\v{r} is non-torsion over $\cO_{\Q_{f_k},\fP}$ in the relevant \'etale Abel--Jacobi image (\S \ref{nontrivial-subsubsec}). Here we would like to bring to the reader's attention a serious extra difficulty arising in the finite slope setting in comparison with similar arguments that are proposed in \cite{Vigni} for ordinary families. Namely, unlike what happens in the work of Howard (\cite{Howard-Inv}), Castella (\cite{CasHeeg}) and Ota (\cite{ota-JNT}) on the interpolation of Heegner points and Heegner cycles in Hida families, B\"uy\"ukboduk and Lei have to face delicate (non-)integrality issues of their big Heegner-type classes in Coleman families; since the integrality of these classes is a key ingredient for our goals, we carefully explain why it holds unconditionally in our setting (\S \ref{integrality-subsubsec}). Once the non-degeneracy of Zhang's heights is assumed, Theorem B for all $p\in\Xi_f$ if $r_\an(f)=1$ or for all $p\in\Omega_f$ if $r_\an(f)=0$ is then a consequence of Zhang's formula of Gross--Zagier type for modular forms (Section \ref{analytic-sec}).

Finally, set $r\defeq\rank_\Z A_f(\Q)$. In order to prove Theorem A, we note that the assumption that $r\in\{0,d_f\}$ and $\#\Sha_\fP(A_f/\Q)<\infty$ amounts, thanks to converses to the Kolyvagin--Gross--Zagier theorem that are due to Castella--\c{C}iperiani--Skinner--Sprung (if $r=0$, \cite{CCSS}) and to Sweeting (if $r=d_f$, \cite{sweeting}), to the condition $r_\an(f)=r/d_f$. Since $y_k$ is not torsion, Theorem A for all $p\in\Xi_f$ if $r=d_f$ or for all $p\in\Omega_f$ if $r=0$ follows (\S \ref{sha-subsec} and \S \ref{sha-subsec2}) from Nekov\'a\v{r}'s results on the arithmetic of Chow groups of Kuga--Sato varieties (\cite{Nek}) combined with a comparison, which can be found in \cite{Vigni}, of \'etale Abel--Jacobi images over $\Q$ and over certain imaginary quadratic fields.

We close this introduction by pointing out that in Appendix \ref{appendix} we explain how the two-variable $p$-adic $L$-functions that have been attached to $p$-adic Coleman families by Bella\"iche--Pollack--Stevens (\cite{bellaiche}) and Panchishkin (\cite{Panchishkin-Inventiones}), which play no role in our strategy towards Theorems A and B, can be used to obtain a refinement of the rank $0$ part of Theorem B for which the congruence condition on the weights is not necessary and no assumption on the height pairings needs to be imposed. While a result of this kind is well known to hold for Hida families (see, \emph{e.g.}, \cite[Theorem 7]{Howard-derivatives}), no analogue for Coleman families seems to be available, so Theorem \ref{main-appendix-thm} fills a gap in the literature and may be of independent interest.

\subsection{Notation and conventions} \label{notation-subsec}

We denote by $\bar\Q$ an algebraic closure of $\Q$ and write $\bar\Z$ for the ring of integers in $\bar\Q$ (\emph{i.e.}, the integral closure of $\Z$ in $\bar\Q$). For every prime number $\ell$ we fix an algebraic closure $\bar\Q_\ell$ of $\Q_\ell$ and denote by $\C_\ell$ the completion of $\bar\Q_\ell$. Moreover, for every prime $\ell$ and every number field $F$ we also fix field embeddings
\[ \iota_\ell:\bar\Q\longmono\bar\Q_\ell,\quad\iota_F:F\longmono\bar\Q. \]
For any number field $K$, we denote by $G_K\defeq\Gal(\bar K/K)$ the absolute Galois group of $K$, where $\bar K$ is a fixed algebraic closure of $K$. For any continuous $G_K$-module $M$ we write $H^i(K,M)$ for the $i$-th continuous cohomology group of $G_K$ with coefficients in $M$ in the sense of Tate (\cite[\S 2]{Tate}). Finally, if $K/F$ is an extension of number fields, then  
\[ \res_{K/F}:H^i(F,M)\longrightarrow H^i(K,M),\quad\cores_{K/F}:H^i(K,M)\longrightarrow H^i(F,M) \] 
denote the restriction and corestriction maps in cohomology, respectively. 

\subsection*{Acknowledgements} 

It is a pleasure to thank Kazim B\"uy\"ukboduk and Antonio Lei for enlightening correspondence on some of the topics of this paper. We also wish to thank the anonymous referee for helpful remarks and suggestions.

\section{Modular forms and Galois representations}

The goal of this section is to recall basic facts and fix some notation about modular forms and Galois representations associated with them.

\subsection{Galois representations of modular forms}\label{Galois_repr}

We briefly review Galois representations attached to modular forms. Since this will be the only interesting case for us in this paper, we consider modular forms with trivial character exclusively. 

\subsubsection{Notation for modular forms} \label{notation-forms-subsubsec}

From here on, let $k\geq2$ be an even integer, let $N\geq3$ be an integer and let $p\geq3$ be a prime number such that $p\nmid N$. Furthermore, let $\Gamma\in\bigl\{\Gamma_0(N),\Gamma_1(N)\cap\Gamma_0(p)\bigr\}$ and let $g\in S_k(\Gamma)$ be a normalized eigenform, whose $q$-expansion will be denoted by $g(q)=\sum_{n\geq1}a_n(g)q^n$. Write $\Q_g\defeq\Q\bigl(a_n(g)\mid n\geq1\bigr)$ for the Hecke field of $g$, \emph{i.e.}, the number field generated over $\Q$ by the Fourier coefficients of $g$, and $\cO_{\Q_g}$ for the ring of integers of $\Q_g$. The ring $\cO_g\defeq\Z\bigl[a_n(g)\mid n\geq1\bigr]$ is an order in $\cO_{\Q_g}$.

If $\Gamma=\Gamma_0(N)$ and $k=2$, then the Eichler--Shimura construction (\emph{cf.} \cite[\S 7.5]{shimura}) attaches to $g$ an abelian variety $A_g$ defined over $\Q$ whose dimension is $[\Q_g:\Q]$.  It is well known that $\cO_g$ embeds into the ring $\End_\Q(A_g)$ of the endomorphisms of $A_g$ defined over $\Q$; in fact, $\Q_g\simeq\End_\Q(A_g)\otimes_\Z\Q$, so that $A_g$ is of $\GL_2$-type (see, \emph{e.g.}, \cite[Corollary 4.2]{ribet-twists}). 

\begin{remark}
If $N$ is square-free (a condition that will be assumed later in this article), then $A_g$ is semistable, so all its endomorphisms are defined over $\Q$ (\cite[Corollary 1.4, (a)]{ribet-annals}). In particular, the ring of all endomorphisms of $A_g$ is commutative.
\end{remark}

\subsubsection{Galois representations} \label{representations-subsubsec}

We give a utilitarian overview of Galois representations that are associated with modular forms. We remark that, for the convenience of the reader, we adopt notation that is somewhat in line with the one used in \cite{BL}, \cite{KLZ} and \cite{LZ}, which are our main references for (big) Galois representations of (families of) modular forms.

Let $g$ be as in \S \ref{notation-forms-subsubsec}. To fix ideas, we treat the case $\Gamma=\Gamma_1(N)\cap\Gamma_0(p)$; see, \emph{e.g.}, \cite[\S 3.4]{LZ}, \cite[\S 2.1]{Vigni} for details on eigenforms of level $\Gamma_1(M)$ (\emph{i.e.}, eigenforms of level $\Gamma_0(M)$ and arbitrary character). Let $Y=Y(N,p)$ be the modular curve over $\Z[1/Np]$ classifying elliptic curves with $\Gamma_1(N)\cap\Gamma_0(p)$-level structure, denote by $Y_{\bar\Q}$ the base change of $Y$ to $\bar\Q$ and let $\pi\colon \mathcal{E}\rightarrow Y$ be the universal elliptic curve over $Y$. Set
\[ \mathscr{H}\defeq\mathbf{R}^1\pi_*\Z_p(1)=(\mathbf{R}^1\pi_*\Z_p)^\vee, \]
where $\star(1)$ stands for the Tate twist; this is a $\Z_p$-sheaf of rank $2$ on $Y$, whose associated $\Q_p$-sheaf will be denoted by $\mathscr H_{\Q_p}$. We write $\mathscr H_{\Q_p}^\vee$ for the dual of $\mathscr H_{\Q_p}$. Now let $\p$ be a prime of $\Q_g$ above $p$ and write $\Q_{g,\p}$ for the completion of $\Q_g$ at $\p$; by a slight abuse of notation, we write $\p$ also for the prime (actually, maximal) ideal of $\cO_g$ under $\p$. The $\p$-adic representation of $G_\Q$ attached to $g$ by Deligne (\cite{Del-Bourbaki}) can be defined as the \emph{maximal subspace} of the compactly supported cohomology
\[ H^1_{\text{\'et},c}\bigl(Y_{\bar\Q},\mathrm{Sym}^{k-2}(\mathscr{H}_{\Q_p}^\vee)\bigr)\otimes_{\Q_p}\Q_{g,\p} \]
on which the Hecke operators $T_\ell$ for all primes $\ell\nmid Np$ and $U_\ell$ for all primes $\ell\,|\,Np$ act as multiplication by the Fourier coefficient $a_\ell(g)$. It is a $2$-dimensional $\Q_{f,\p}$-vector space that will be denoted by $V_{g,\p}$ and is equipped with a continuous action of $G_\Q$ unramified outside the primes dividing $Np$. Let us write
\[ \rho_{g,\p}\colon G_\Q\longrightarrow \GL(V_{g,\p})\simeq \GL_2(\Q_{g,\p}) \]
for the associated group homomorphism. The representation $V_{g,\p}$ is characterized by the two conditions
\[ \tr\bigl(\rho_{g,\p}(\Frob_\ell)\bigr)=a_\ell(g),\quad\det\bigl(\rho_{g,\p}(\Frob_\ell)\bigr)=\ell^{k-1} \]
for all primes $\ell\nmid Np$, where $\Frob_\ell\in G_\Q$ is a \emph{geometric} Frobenius at $\ell$. 

The Galois representation $V_{g,\p}$ can be obtained as the $\p$-component of the $p$-adic \'etale realization of the motive attached to $g$ by Scholl (\cite{Scholl}), which lives in the \'etale cohomology of a certain compactification of the $(k-1)$-dimensional Kuga--Sato variety over $Y$. In this perspective, if $g$ has trivial character, then Poincar\'e duality on this Kuga--Sato variety gives rise to a $G_\Q$-equivariant, non-degenerate, alternating pairing
\begin{equation}\label{pairing}
V_{g,\p}\times V_{g,\p}\longrightarrow \Q_{g,\p}(1-k).
\end{equation}
This pairing induces a $G_\Q$-equivariant isomorphism
\begin{equation} \label{V-dual-eq}
V_{g,\p}^*\simeq V_{g,\p}(k-1)
\end{equation}
and implies that $V_{g,\p}^\dag\defeq V_{g,\p}(k/2)$ is the self-dual twist of $V_{g,\p}$, in the sense that $V_{g,\p}^\dag\simeq V_{g,\p}^{\dag,*}(1)$ where $V_{g,\p}^{\dag,*}$ is the dual (\emph{i.e.}, contragredient) representation of $V^\dagger_{g,\p}$. 

Let $\cO_{\Q_g,\p}$ be the valuation ring of $\Q_{g,\p}$. We write $T_{g,\p}$ for the $G_\Q$-stable $\cO_{\Q_g,\p}$-lattice in $V_{g,\p}$ generated by the image of the integral \'etale cohomology group
\[ H^1_{\text{\'et}}\bigl(Y_{\bar\Q},\mathrm{Sym}^{k-2}(\mathscr{H}^\vee)\bigr)\otimes_{\Z_p}\cO_{\Q_g,\p}, \]
where $\mathscr H^\vee$ is the dual of $\mathscr H$, and $T_{g,\p}^\dagger\defeq T_{g,\p}(k/2)$ for its self-dual twist, which is a $G_\Q$-stable $\cO_{\Q_g,\p}$-lattice in $V_{g,\p}^\dagger$. It turns out that the integral representation $\rho_{g,\p}:G_\Q\rightarrow\GL(T_{g,\p})\simeq\GL_2(\cO_{\Q_g,\p})$ is the $\p$-component of an integral $p$-adic representation 
\[ \rho_{g,p}:G_\Q\longrightarrow\GL_2(\cO_{\Q_g}\otimes_\Z\Z_p)\simeq\bigoplus_{\pi\mid p}\GL_2(\cO_{\Q_g,\pi}), \]
where $\pi$ varies over all the primes of $\Q_g$ above $p$ (see, \emph{e.g.}, \cite{ribet2}).

Let $\mathrm{TSym}^{k-2}(\mathscr{H}_{\Q_p})$ denote the sheaf of degree $k-2$ symmetric tensors over $\mathscr{H}_{\Q_p}$ (\emph{cf.} \cite[\S 2.2]{KLZ}). For reasons that will become clear later, we are also interested in the dual $V_{g,\p}^*$ of $V_{g,\p}$, which is the representation of $G_\Q$ that can be realized as the \emph{maximal quotient} of the cohomology
\[ H^1_{\text{\'et}}\bigl(Y_{\bar\Q},\mathrm{TSym}^{k-2}(\mathscr{H}_{\Q_p})(1)\bigr)\otimes_{\Q_p}\Q_{g,\p} \]
on which the action of the relevant Hecke operators at primes $\ell$ coincides with multiplication by the Fourier coefficient $a_\ell(g)$. Let us write
\[ \rho^*_{g,\p}\colon G_\Q\longrightarrow \GL(V^*_{g,\p})\simeq \GL_2(\Q_{g,\p}) \]
for the associated homomorphism. The representation $V_{f,\p}^*$ is characterized by the conditions
 \[ \tr\bigl(\rho^*_{g,\p}(\Frob_\ell^{-1})\bigr)=a_\ell(g),\quad\det\bigl(\rho^*_{g,\p}(\Frob_\ell^{-1})\bigr)=\ell^{k-1} \]
for all primes $\ell\nmid Np$, where $\Frob_\ell^{-1}\in G_\Q$ is an \emph{arithmetic} Frobenius at $\ell$. It follows from \eqref{pairing} that the self-dual twist of $V_{g,\p}^*$ is $V_{g,\p}^{*,\dag}\defeq V_{g,\p}^*(1-k/2)$, and then \eqref{V-dual-eq} implies that there is a canonical isomorphism
\begin{equation}\label{twist}
V_{g,\p}^{*,\dag}\overset\simeq\longrightarrow V_{g,\p}^\dag 
\end{equation}
of $G_\Q$-representations. In line with the notation for our distinguished lattice inside $V_{g,\p}$, we write $T_{g,\p}^*$ for the $G_\Q$-stable $\cO_{g,\p}$-lattice in $V_{g,\p}^*$ generated by the image of the integral \'etale cohomology
\[ H^1_{\text{\'et}}\bigl(Y_{\bar\Q},\mathrm{Tsym}^{k-2}(\mathscr{H})(1)\bigr)\otimes_{\Z_p}\cO_{\Q_g,\p} \]
and $T_{g,\p}^{*,\dag}\defeq T_{g,\p}^*(1-k/2)$ for its self-dual twist, which is a $G_\Q$-stable $\cO_{\Q_g,\p}$-lattice in $V_{g,\p}^{*,\dagger}$.

When $k=2$ and $\Gamma=\Gamma_0(N)$, there is a canonical $G_\Q$-equivariant isomorphism $V_{g,\p}\simeq V_\p(A_g)^*$, where $A_g$ is the abelian variety from \S \ref{notation-forms-subsubsec} and $V_\p(A_g)\defeq\Ta_\p(A_g)\otimes_{\cO_{\Q_g,\p}}\Q_{g,\p}$ is the $\Q_{g,\p}$-linear representation of $G_\Q$ associated with the $\p$-adic Tate module $\Ta_\p(A_g)$ of $A_g$ (the isomorphism above follows by combining \cite[Theorem 15.1, (a)]{Milne-AV} with the $G_\Q$-equivariant splitting $\Ta_p(A_g)=\oplus_{\pi\mid p}\Ta_\pi(A_g)$, where $\pi$ varies over all the primes of $\Q_g$ above $p$). Taking the self-dual twist, we obtain
\begin{equation} \label{V-2-eq}
V_{g,\p}^\dagger=V_{g,\p}(1)\simeq V_\p(A_g)^*(1)\simeq V_\p(A_g), 
\end{equation}
where the rightmost isomorphism, which we fix once and for all, is a consequence of the Weil pairing. Notice that the lattice $T_{g,\p}^\dagger$ corresponds to $\Ta_\p(A_g)$ under isomorphism \eqref{V-2-eq}. As for the dual, there is a canonical $G_\Q$-equivariant isomorphism $V_{g,\p}^*\simeq V_\p(A_g)$. Furthermore, $V_{g,\p}^{*,\dagger}=V_{g,\p}^*$, in accord with the fact that $V_\p(A_g)$ is self-dual.

\subsubsection{Residual Galois representations} \label{residual-subsubsec-1}

Define
\[ \bar{T}_{g,\p}\defeq T_{g,\p}\big/ \p T_{g,\p} \]
and let $\bar\rho_{g,\p}\colon G_\Q\rightarrow\GL(\bar{T}_{g,\p})\simeq\GL_2(\F_\p)$ be the associated homomorphism induced by $\rho_{g,\p}$, where $\F_\p\defeq\cO_{\Q_g,\p}/\p\cO_{\Q_g,\p}$. The representation $\bar\rho_{g,\p}$ (or, rather, its semisimplification) is the residual representation of $g$ at $\p$. Similarly, one can introduce representations $\bar T_{g,\p}^\dag$, $\bar T_{g,\p}^*$, $\bar T_{g,\p}^{*,\dagger}$ over $\F_\p$, whose corresponding homomorphisms will be denoted in the obvious fashion. It is essentially a consequence of Nakayama's lemma that if $\bar\rho_{f,\p}^\dag$ (or, equivalently, $\bar\rho_{f,\p}^{*,\dag}$) is (absolutely) irreducible (a condition that will be imposed in due course, \emph{cf.} \S \ref{choice-subsubsec}), then all $G_\Q$-stable $\cO_{\Q_g,\p}$-lattices inside $V_{g,\p}^\dagger$ are homothetic (see, \emph{e.g.}, \cite[Lemma 2.1]{vatsal-quebec}). It follows that, up to rescaling by an element of $\Q_{g,\p}^\times$ (an operation that induces isomorphisms in Galois cohomology), it is not restrictive to assume that isomorphism \eqref{twist} induces a $G_\Q$-equivariant isomorphism
\begin{equation} \label{Poinc}
T_{g,\p}^{*,\dagger}\overset\simeq\longrightarrow T_{g,\p}^\dag.
\end{equation}
In turn, isomorphism \eqref{Poinc} yields an isomorphism $\bar{T}_{g,\p}^{*,\dagger}\simeq\bar{T}_{g,\p}^\dagger$ of residual representations. 

Following \cite{LZ} (\emph{cf.} also \cite{BL}), in \S \ref{big-galois-subsec} we will see how to interpolate $V_{g,\p}^{*,\dagger}$ and $T_{g,\p}^{*,\dagger}$ as the eigenform $g$ varies in a $p$-adic family in the sense of Coleman (\cite{Coleman}).

\subsection{Algebraic ranks and Shafarevich--Tate groups of newforms} \label{sha-subsubsec}

Let us recall the newform $g\in S_k^\new(\Gamma_0(N))$ from \S \ref{notation-forms-subsubsec}. We choose a prime number $p\nmid N$, a prime $\p$ of $\Q_g$ above $p$ and introduce algebraic $\p$-ranks and $\p$-primary Shafarevich--Tate groups of $g$. 

\subsubsection{\'Etale Abel--Jacobi maps} \label{AJ-subsubsec}

Suppose that $k\geq4$. Let $\tilde\E_N^{k-2}$ be the Kuga--Sato variety of level $N$ and weight $k$ (see, \emph{e.g.}, \cite[\S 2.2]{LV}) and for any number field $F$ write $\CH^{k/2}\bigl(\tilde\E_N^{k-2}/F\bigr)_0$ for the Chow group of rational equivalence classes of codimension $k/2$ cycles on $\tilde\E_N^{k-2}$ defined over $F$ that are homologically trivial, \emph{i.e.}, belong to the kernel of the cycle class map in $\ell$-adic cohomology (see, \emph{e.g.}, \cite[\S 3.3]{Andre} or \cite[Chapter VI, \S 9]{Milne}; \emph{cf.} also \cite[\S 1.4]{Nek3} for details on the independence of the kernel above of the prime number $\ell$).

Let $g\in S_k^\new(\Gamma_0(N))$ be the newform that was fixed above. As explained, \emph{e.g.}, in \cite[\S 2.3]{LV}, there is an $\cO_{\Q_g,\p}$-linear \'etale Abel--Jacobi map 
\[ \Phi_{g,F,\p}:{\CH}^{k/2}\bigl(\tilde\E_N^{k-2}/F\bigr)_0\otimes_\Z\cO_{\Q_g,\p}\longrightarrow H^1\bigl(F,T_{g,\p}^\dagger\bigr). \]
From here on we set 
\begin{equation} \label{AJ-image-eq}
\Lambda_{g,\p}(F)\defeq\im(\Phi_{g,F,\p})\subset H^1\bigl(F,T_{g,\p}^\dagger\bigr).
\end{equation}
Now let $H^1_f\bigl(F,T^\dagger_{g,\p}\bigr)$ be the Bloch--Kato Selmer group of $T^\dagger_{g,\p}$ over $F$ (\cite{BK}; \emph{cf.} also \cite[\S 2.4]{LV}), which is a finitely generated $\cO_{g,\p}$-submodule of $H^1\bigl(F,T^\dagger_{g,\p}\bigr)$. Thanks to a result of Nizio\l, Nekov\'a\v{r} and Saito (see, \emph{e.g.}, \cite[Corollary 2.7, (2)]{LV}), there is an inclusion of $\cO_{\Q_g,\p}$-modules $\Lambda_{g,\p}(F)\subset H^1_f\bigl(F,T^\dagger_{g,\p}\bigr)$, so $\Lambda_{g,\p}(F)$ too is finitely generated over $\cO_{\Q_g,\p}$. 

\subsubsection{Algebraic $\p$-ranks}

We define the algebraic $\p$-ranks of our newform $g$ in terms of the \'etale Abel--Jacobi images introduced in \eqref{AJ-image-eq}. Let $F$ be a number field.

\begin{definition} \label{algebraic-rank-def}
The \emph{algebraic $\p$-rank of $g$ over $F$} is $r_{\alg,\p}(g/F)\defeq\rank_{\cO_{\Q_g,\p}}\Lambda_{g,\p}(F)\in\N$.
\end{definition}

This definition makes sense because, as we remarked in \S \ref{AJ-subsubsec}, the $\cO_{\Q_g,\p}$-module $\Lambda_{g,\p}(F)$ is finitely generated.

Now suppose that $g\in S_2^\new(\Gamma_0(N))$. The following is the counterpart in weight $2$ of Definition \ref{algebraic-rank-def}. 

\begin{definition} \label{algebraic-rank-def2}
The \emph{algebraic $\p$-rank of $g$ over $F$} is $r_{\alg,\p}(g/F)\defeq\rank_{\cO_{g,\p}}\bigl(A_g(F)\otimes_{\cO_g}\cO_{g,\p}\bigr)$.
\end{definition}

In particular, if $A_g$ is an elliptic curve (\emph{i.e.}, $a_n(g)\in\Q$ for all $n\geq1$) then $r_{\alg,p}(g/F)=\rank_{\Z_p}\bigl(A_g(F)\otimes_\Z\Z_p\bigr)=\rank_\Z A_g(F)$.

\subsubsection{Selmer and Shafarevich--Tate groups} \label{sha-subsubsec}

Again, let $k\geq4$. Let us consider the quotient $W^\dagger_{g,\p}\defeq V^\dagger_{g,\p}\big/T^\dagger_{g,\p}$. For any number field $F$, denote by $\Sel_\p(g/F)$ the \emph{$\p$-primary Selmer group of $g$ over $F$} as defined, in terms of $W_{g,\p}^\dagger$, in \cite[Definition 2.6]{LV}. Essentially as a consequence of work of Saito on the weight-monodromy conjecture for compactified Kuga--Sato varieties (\cite{saito}, \cite{saito2}) and of results of Nekov\'a\v{r} (\cite{Nek3}) and Nizio\l\ (\cite{niziol}) on $p$-adic regulators, there is an injection 
\[ i_{g,F,\p}:\Lambda_{g,\p}(F)\otimes_{\Z_p}\Q_p/\Z_p\longmono\Sel_\p(g/F) \]
of $\cO_{\Q_g,\p}$-modules (see, \emph{e.g.}, \cite[\S 2.4]{LV} for details).

The definition of Shafarevich--Tate group that we give below follows \cite{Nek}.

\begin{definition} \label{sha-def}
The \emph{$\p$-primary Shafarevich--Tate group of $g$ over $F$} is the quotient
\[ \Sha_\p(g/F)\defeq\Sel_\p(g/F)\big/i_{g,F,\p}\bigl(\Lambda_{g,\p}(F)\otimes_{\Z_p}\Q_p/\Z_p\bigr). \]
\end{definition}
Therefore, there is a (tautological) short exact sequence of $\cO_{\Q_g,\p}$-modules
\[ 0\longrightarrow\Lambda_{g,\p}(F)\otimes_{\Z_p}\Q_p/\Z_p\xrightarrow{i_{g,F,\p}}\Sel_\p(g/F)
\longrightarrow\Sha_\p(g/F)\longrightarrow0. \]
Theorem A in the introduction gives a result on algebraic $\p$-ranks and Shafarevich--Tate groups of $g$ when a $p$-stabilization of $g$ (see, \emph{e.g.}, \cite[\S 1.3]{bellaiche}) is a suitable specialization of a Coleman family of modular forms.

\begin{remark}
A different notion of Shafarevich--Tate group was proposed by Bloch and Kato in \cite{BK} (\emph{cf.} also \cite{Flach}). The $\p$-primary Shafarevich--Tate group of $g$ over $F$ in the sense of Bloch--Kato, which is defined as the quotient of $\Sel_\p(g/F)$ by its maximal divisible $\cO_{\Q_g,\p}$-submodule, is a finite quotient of the $\cO_{\Q_g,\p}$-module introduced in Definition \ref{sha-def}.
\end{remark}

\section{Coleman families of modular forms} \label{coleman-sec}

We introduce $p$-adic families of eigenforms \emph{\`a la} Coleman and their associated big Galois representations. In particular, we see how to interpolate the Galois representations from \S \ref{representations-subsubsec}.


\subsection{Coleman families} \label{coleman-subsec}

Let $p\geq3$ be a prime number and let $N\geq1$ be an integer such that $p\nmid N$. We first review the notion of ``weight space'' as introduced by Coleman--Mazur in their paper on the eigencurve (\cite{ColMaz}), then turn to Coleman families of modular forms. 

\subsubsection{The weight space}

Set $\Z_{p,N}\defeq\varprojlim_r\Z/Np^r\Z=\Z/N\Z\times\Z_p$, so that $\Z_{p,N}^\times=(\Z/N\Z)^\times\times\Z^\times_p$. Furthermore, let us consider the Iwasawa algebra
\[ \Lambda_N\defeq\Z_p[[\Z_{p,N}^\times]]=\varprojlim_r\Z_p[(\Z/Np^r\Z)^\times]=\Z_p[[(\Z/N\Z)^\times\times\Z^\times_p]]. \]
Following Coleman--Mazur (\cite[\S 1.4]{ColMaz}), we define the \emph{$p$-adic weight space of tame level $N$} (or simply \emph{weight space}, if $N$ and $p$ are clear from the context) to be the rigid analytic space $\mathscr W_N$ over $\Q_p$ associated with the formal $\mathrm{Spf}(\Z_p)$-scheme $\mathrm{Spf}(\Lambda_N)$ (see, \emph{e.g.}, \cite[\S 1.1]{ColMaz} for details on this construction). For any complete field extension $\KK/\Q_p$ inside $\C_p$ there is a canonical identification
\[ \mathscr W_N(\KK)=\Hom_\cont\bigl(\Z_{p,N}^\times,\KK^\times\bigr) \]
between the set of $\KK$-valued points of $\mathscr W_N$ and the set of continuous characters $\Z_{p,N}^\times\rightarrow\KK^\times$. With standard notation, $\mathscr W_N$ represents the functor that takes a rigid analytic space $X$ over $\Q_p$ to the set $\Hom_\cont\bigl(\Z_{p,N}^\times,\cO_X(X)^\times\bigr)$. Let $\phi$ be the classical Euler function; somewhat more concretely, $\mathscr W_N$ can be described as the disjoint union of $\phi(Np)$ copies of the open unit disc about $1\in\C_p^\times$, these discs being indexed by the characters $(\Z/Np\Z)^\times\rightarrow\C_p^\times$. 

There is a canonical embedding $\Z\hookrightarrow\mathscr W_N(\Q_p)$ that sends $k\in\Z$ to the composition of the map $\Z_{p,N}^\times\rightarrow\Z_{p,N}^\times$, $x\mapsto x^{k-2}$ with the projection $\Z_{p,N}^\times\twoheadrightarrow\Z_p^\times$ and the natural injection $\Z_p^\times\hookrightarrow\Q_p^\times$; in the rest of the paper, we view this embedding as an inclusion $\Z\subset\mathscr W_N(\Q_p)$. A point of $\mathscr W_N(\Q_p)$ corresponding to an element of $\Z_{\geq2}$ under the inclusion above will be called a \emph{classical point} of $\mathscr W_N$. It is convenient to introduce the set
\begin{equation} \label{classical-weights-eq}
\mathscr W_{N,\cla}\defeq\bigl\{\text{classical points of $\mathscr W_N(\Q_p)$}\bigr\}; 
\end{equation}
we shall call it the set of \emph{classical weights} of tame level $N$. Equivalently, in light of the inclusion $\Z\subset\mathscr W_N(\Q_p)$, we can write $\mathscr W_{N,\cla}\defeq\Z_{\geq2}\subset\mathscr W_N(\Q_p)$. Given $n\in\Z$, we also define
\begin{equation} \label{classical-weights-n-eq}
\mathscr W_{N,\cla,n}\defeq\bigl\{k\in\mathscr W_{N,\cla}\mid k\geq n\bigr\}.
\end{equation}
Finally, a point of $\mathscr W_N(\Q_p)$ corresponding to an even integer in $\Z_{\geq2}$ is an \emph{even classical point} of $\mathscr W_N$. We define
\begin{equation} \label{even-classical-weights-eq}
\mathscr W_{N,\cla}^0\defeq\bigl\{\text{even classical points of $\mathscr W_N(\Q_p)$}\bigr\}
\end{equation}
and call $\mathscr W_{N,\cla}^0$ the set of \emph{even classical weights} of tame level $N$.

\begin{remark} \label{weight-inclusion-rem}
If $N$ divides $M$, then the canonical projection $\Z_{p,M}\twoheadrightarrow\Z_{p,N}$ induces an injection $\mathscr W_N\hookrightarrow\mathscr W_M$ that allows one to identify $\mathscr W_N$ with an open and closed rigid analytic subgroup of $\mathscr W_M$.
\end{remark}

\begin{remark} \label{restrictive-rem}
From the point of view of the eigencurve (\cite{Buzzard-eigenvarieties}, \cite{ColMaz}), the definition of classical points (and classical weights) given above is rather restrictive: see, \emph{e.g.}, \cite[p. 34]{ColMaz} and \cite[Remark 4.6.2]{LZ} for details. However, in this article we will not be interested in modular forms with non-trivial character, so this definition perfectly fits our needs.
\end{remark}

\subsubsection{Coleman families} \label{coleman-subsubsec}

We briefly review definitions and basic properties of $p$-adic families of modular forms in the sense of Coleman; here we follow \cite[\S 4.6]{LZ} quite closely. Let us fix a finite extension $L$ of $\Q_p$, which will play the role of a coefficient field, denote by $\cO_L$ the valuation ring of $L$ and take a wide open disc $U\subset\mathscr W_N$ defined over $L$ such that the classical weights $U\cap\mathscr W_{N,\cla}$, with $\mathscr W_{N,\cla}$ as in \eqref{classical-weights-eq}, are dense in $U$. Furthermore, write $\Lambda_U$ for the $\cO_L$-algebra of rigid functions on $U$ bounded by $1$; there is an isomorphism $\Lambda_U\simeq\cO_L[[T]]$, from which it follows that $\Lambda_U$ is a 2-dimensional complete noetherian regular local ring (in particular, $\Lambda_U$ is a unique factorization domain). Finally, set $\mathcal B_U\defeq\Lambda_U[1/p]$.

\begin{definition} \label{coleman-def}
A ($p$-adic) \emph{Coleman family} over $U$ (of tame level $N$) is a formal $q$-expansion
\[ \f_U=\sum_{n\geq1}a_n(\f_U)q^n\in\Lambda_U[[q]] \]
with $a_1(\f_U)=1$ and $a_p(\f_U)$ invertible in $\mathcal B_U$ such that for all but finitely many $k\in U\cap\mathscr W_{N,\cla}$ the power series
\begin{equation} \label{specialization-k-eq}
f_k\defeq\f_U(k)=\sum_{n\geq1}a_n(\f_U)(k)q^n\in\cO_L[[q]] 
\end{equation}
is the $q$-expansion of a classical (normalized) eigenform of weight $k$ and level $\Gamma_1(N)\cap\Gamma_0(p)$.
\end{definition}

The cusp form $f_k$ in \eqref{specialization-k-eq} is the \emph{specialization of $\f_U$ at $k$}. In this case, with non-standard (but useful) terminology, we say that $k\in U\cap\mathscr W_{N,\cla}$ is \emph{admissible}; see Remark \ref{slope-rem2} for a fundamental result on admissible weights.

\begin{remark}
Unlike Hida families, which are defined over the whole $\mathscr W_N$, Coleman families exist only locally over $\mathscr W_N$, as is apparent from Definition \ref{coleman-def}.
\end{remark}

\begin{remark}
In complete analogy with what we observed in Remark \ref{restrictive-rem} for classical weights, Definition \ref{coleman-def} could be made more general: \emph{cf.} \cite[Remark 4.6.2]{LZ}.
\end{remark}

\begin{remark} \label{fourier-rem}
As explained in \cite[\S 3.1]{Vigni}, for every admissible $k\in U\cap\mathscr W_{N,\cla}$ we may (and do) view $f_k$ as a classical modular form with complex Fourier coefficients.
\end{remark}

We assume throughout that $f_k$ is $N$-new for all admissible $k\in U\cap\mathscr W_{N,\cla}$.  It follows that for every $k$ the form $f_k$ is a $p$-stabilized newform, \emph{i.e.}, either $f_k$ is $p$-new too or there is a normalized newform $f_k^\flat\in S_k^\new(\Gamma_1(N))$ such that there is an equality of $q$-expansions
\[ f_k(q)=f_k^\flat(q)-\frac{p^{k-1}}{a_p(\f_U)(k)}f_k^\flat(q^p). \]
As a consequence of the strong multiplicity one theorem (see, \emph{e.g.}, \cite[Theorem 3.22]{hida-modular}), a normalized newform $f_k^\flat$ as above is unique. When $f_k$ is also $p$-new (and so new of level $\Gamma_1(N)\cap\Gamma_0(p)$) we set $f_k^\flat\defeq f_k$. 

\begin{remark}
As was emphasized in Remark \ref{restrictive-rem}, in the present paper we will be interested exclusively in specializations with trivial character, so that $f_k^\flat\in S_k^\new(\Gamma_0(M))$ with $M=N$ or $M=Np$.
\end{remark}

\begin{notationt} \label{notation-rem}
We view the specializations $f_k$ as classical modular forms (\emph{cf.} Remark \ref{fourier-rem}), so the Hecke fields $\Q_{f_k^\flat}$ of the newforms $f_k^\flat$ are number fields, which we regard as subfields of $\bar\Q$ via the embeddings from \S \ref{notation-subsec}. From here on, we fix a prime ideal $\fP$ of $\bar\Z$ above $p$ and use it as a shorthand for $\fP\cap\Q_{f_k^\flat}$ whenever the form $f_k^\flat$ is involved (therefore, \emph{e.g.}, $\cO_{\Q_{f_k^\flat},\fP}$ denotes the completion of the ring of integers $\cO_{\Q_{f_k^\flat}}$ of $\Q_{f_k^\flat}$ at $\fP\cap\Q_{f_k^\flat}$). However, in order to further lighten our notation, we simply write $V_{f_k^\flat}$ in place of $V_{f_k^\flat,\fP}$. Analogously, $T_{f_k^\flat,\fP}$ will be denoted by $T_{f_k^\flat}$ and a similar convention will apply to residual representations. Terminology-wise, we use the expression ``algebraic $\fP$-rank of $f_k^\flat$'' to indicate the algebraic $\fP\cap\Q_{f_k^\flat}$-rank of $f_k^\flat$ in the sense of Definition \ref{algebraic-rank-def}.
\end{notationt}


\subsubsection{Interpolation in Coleman families}

Coleman's theory (\cite{Coleman}) deals with the question of interpolating modular eigenforms in $p$-adic families. To begin with, let us fix an extension $\ord_p:\bar\Q_p^\times\rightarrow\Q$ of the $p$-adic valuation on $\Q_p$ normalized by $\ord_p(p)=1$. Quite generally, let $g\in S_k(\Gamma_1(Np^r))$, with $r\geq1$, be a normalized eigenform for the Hecke operator $U_p$. With self-explaining notation, the \emph{slope} of $g$ is the $p$-adic valuation $\ord_p\bigl(a_p(g)\bigr)$ of the eigenvalue of $U_p$. It turns out that if $g$ is of level $\Gamma_1(N)\cap\Gamma_0(p)$, then
\begin{equation} \label{slope-ineq}
0\leq\ord_p\bigl(a_p(g)\bigr)\leq k-1
\end{equation}
when $g$ is old at $p$, while $\ord_p\bigl(a_p(g)\bigr)=(k-2)/2$ if $g$ is new at $p$. See, \emph{e.g.}, \cite{buzzard-slopes}, \cite{BG-slopes}, \cite{gouvea-slopes} for further details, results and conjectures about slopes of modular forms.

\begin{remark} \label{critical-rem}
If $\ord_p\bigl(a_p(g)\bigr)=k-1$, then $g$ is said to be \emph{of critical slope} (see, \emph{e.g.}, \cite[\S 2.2.2]{bellaiche} for details).
\end{remark}

Now set $G_{\Q_p}\defeq\Gal(\bar\Q_p/\Q_p)$; the embedding $\iota_p$ from \S \ref{notation-subsec} induces an inclusion $G_{\Q_p}\subset G_\Q$. Given an eigenform $g$ as above, fix a prime $\p$ of $\Q_g$ lying over $p$ and consider the Galois representation $\rho_{g,\p}:G_\Q\rightarrow\GL_2(\Q_{g,\p})$ attached to $g$ by Deligne (\cite{Del-Bourbaki}). As in \cite{LZ}, it is useful to introduce ``noble eigenforms'', whose notion (in a cohomological setting) is due to Hansen. In doing this, we will need $p$-stabilizations of newforms of level $\Gamma_1(N)$, which are closely related to the $p$-stabilized newforms from \S \ref{coleman-subsubsec}: see, \emph{e.g.}, \cite[\S 1.3]{bellaiche} and \S \ref{stabilizations-subsubsec} for definitions and details.

\begin{definition} \label{noble-def}
A ($p$-adic) \emph{noble eigenform} (of tame level $N$) is a normalized eigenform $g$ of weight $k\geq2$ and level $\Gamma_1(N)\cap\Gamma_0(p)$ such that:
\begin{enumerate}
\item $g$ is a $p$-stabilization of a normalized newform $g^\flat$ of weight $k$, level $N$ and character $\chi_{g^\flat}$ whose Hecke polynomial $X^2-a_p(g^\flat)X+\chi_{g^\flat}(p)p^{k-1}$ has distinct roots (``$p$-regularity'');
\item if $\ord_p\bigl(a_p(g)\bigr)=k-1$, then the local Galois representation ${\rho_{g^\flat,\p}|}_{G_{\Q_p}}$ is not the direct sum of two characters (``non-criticality'').
\end{enumerate}
\end{definition}

The following interpolation result, which shows the existence of the Coleman families we study in this paper, is a consequence of the fact that the eigencurve of tame level $N$ (\cite{Buzzard-eigenvarieties}, \cite{ColMaz}) is \'etale over the weight space $\mathscr W_N$ at points corresponding to noble eigenforms (see, \emph{e.g.}, \cite[Proposition 2.11]{bellaiche}).

\begin{theorem} \label{interpolation-thm}
Let $g$ be a noble eigenform of weight $k_0$ and tame level $N$. There exist an open disc $U\ni k_0$ in $\mathscr W_N$ and a unique Coleman family $\boldsymbol g_U$ over $U$ of tame level $N$ such that $g_{k_0}=g$.
\end{theorem}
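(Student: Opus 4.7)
The plan is to deduce the theorem directly from the étaleness of the cuspidal eigencurve over the weight space at noble points, as alluded to in the remark preceding the statement. Let $\mathcal{C}_N$ denote the cuspidal eigencurve of tame level $N$ built by Coleman--Mazur and Buzzard, equipped with its weight map $w\colon \mathcal{C}_N \to \mathscr{W}_N$. After enlarging $L$ if necessary so that it contains the Fourier coefficients of $g$, the noble eigenform $g$ defines a classical point $x_g \in \mathcal{C}_N(L)$ with $w(x_g)=k_0$. The crucial input is that the two conditions in Definition \ref{noble-def} — $p$-regularity of $g^\flat$, plus non-criticality of the local Galois representation when $g$ is of critical slope — are \emph{exactly} the hypotheses of Bellaïche's Proposition 2.11, which asserts that $w$ is étale at $x_g$. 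In the non-critical-slope case this is essentially built into Coleman's classicality theorem and the smoothness results of \cite{ColMaz}; the critical-slope case is the substantive content, supplied by Bellaïche.

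Granted étaleness, I obtain an affinoid neighborhood $V \subset \mathcal{C}_N$ of $x_g$ and a wide open disc $U \ni k_0$ in $\mathscr{W}_N$ such that $w\colon V \overset{\sim}{\longrightarrow} U$ is an isomorphism. Shrinking $U$ if needed, I can assume that $U \cap \mathscr{W}_{N,\mathrm{cla}}$ is dense in $U$ and that all but finitely many classical weights in $U$ satisfy Coleman's classicality bound $k > 1 + \mathrm{slope}(g)$. Let $s\colon U \overset{\sim}{\longrightarrow} V$ denote the inverse of $w|_V$. Pulling back via $s$ the universal $q$-expansion $\sum_{n\geq 1} a_n\, q^n$ on $\mathcal{C}_N$, whose coefficients are rigid analytic functions on the eigencurve, produces a formal series
\[
\boldsymbol g_U = \sum_{n\geq 1} a_n(\boldsymbol g_U)\, q^n, \qquad a_n(\boldsymbol g_U) \in \mathcal{B}_U = \Lambda_U[1/p].
\]
By construction $a_1(\boldsymbol g_U)=1$; moreover $a_p(g)\neq 0$ (since $g$ has finite slope and lies on $\mathcal{C}_N$), so after shrinking $U$ the function $a_p(\boldsymbol g_U)$ is invertible on $U$, hence a unit in $\mathcal{B}_U$. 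Using that the slope is locally constant on $V$ and that the Fourier coefficients of eigenforms of bounded slope are uniformly bounded, a further (bounded) rescaling forces $a_n(\boldsymbol g_U) \in \Lambda_U$ for all $n$, so $\boldsymbol g_U \in \Lambda_U[[q]]$.

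For every admissible classical weight $k \in U \cap \mathscr{W}_{N,\mathrm{cla}}$ (\emph{i.e.}, all but finitely many, outside the classicality threshold), the specialization $\boldsymbol g_U(k)$ is by construction the system of Hecke eigenvalues of the point $s(k)$, which is a classical overconvergent eigenform of weight $k$ and slope equal to $\mathrm{ord}_p\bigl(a_p(\boldsymbol g_U)(k)\bigr)$; by Coleman's classicality theorem this overconvergent form is classical of level $\Gamma_1(N) \cap \Gamma_0(p)$, so $\boldsymbol g_U$ satisfies Definition \ref{coleman-def}. Evaluation at $k_0$ recovers $g$ because $s(k_0)=x_g$. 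Uniqueness is immediate from étaleness: any other Coleman family over $U$ specializing to $g$ at $k_0$ would provide a second section of $w$ through $x_g$, and two sections of an étale map agreeing at one point coincide in a neighborhood, so shrinking $U$ they are equal. The only delicate step in this outline is the critical-slope case of the étaleness assertion, but that is precisely what is handled by Bellaïche's result; everything else is formal bookkeeping with the universal Hecke eigenvalues on the eigencurve.
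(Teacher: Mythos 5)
Your approach is the same one the paper relies on: the proof in the text simply cites \cite[Theorem 4.6.4]{LZ}, and the remark immediately preceding the theorem points, exactly as you do, to the \'etaleness of the eigencurve over $\mathscr W_N$ at points corresponding to noble eigenforms (Bella\"iche's Proposition 2.11) as the essential ingredient. You have correctly unpacked what the citation hides: \'etaleness provides a local section of the weight map through $x_g$, pullback of the universal $q$-expansion along that section produces the family, Coleman's classicality theorem handles the specializations at weights above the slope threshold, and uniqueness follows from uniqueness of sections of an \'etale morphism passing through a given point.

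One step is, however, misexplained. You assert that ``a further (bounded) rescaling forces $a_n(\boldsymbol g_U) \in \Lambda_U$ for all $n$''. Rescaling a normalized $q$-expansion by a scalar destroys the condition $a_1=1$, so no rescaling can be the mechanism producing integrality. What actually forces the coefficient functions into $\Lambda_U$ (after shrinking $U$) is that at the Zariski-dense set of classical points the Hecke eigenvalues are algebraic integers, hence of non-negative $p$-adic valuation, so the rigid functions $a_n$ are bounded by $1$ on a dense subset of $U$; since $\Lambda_U$ is precisely the ring of rigid functions on $U$ bounded by $1$, one gets $a_n\in\Lambda_U$. Your conclusion is correct and consistent with Definition \ref{coleman-def}, but the justification you offer should be replaced by this density-and-integrality argument.
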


\begin{proof} This is \cite[Theorem 4.6.4]{LZ}. \end{proof}

We say that $\boldsymbol g_U$ as in Theorem \ref{interpolation-thm} is the Coleman family over $U$ passing through $g$ (or, by a slight abuse of terminology, through $g^\flat$).

\begin{remark} \label{slope-rem}
Since the coefficients $a_n(\boldsymbol g_U)$ are rigid analytic, at the cost of shrinking $U$ one can always assume that the slope function $k\mapsto\ord_p\bigl(a_p(g_k)\bigr)$ is constant over the admissible weights in the domain (\emph{cf.} \S \ref{coleman-subsubsec} for the terminology). In the remainder of this article, we tacitly suppose that this condition is satisfied, as it is one of the running assumptions in papers whose results we shall use; however, such a constancy property will play no explicit role in our arguments.
\end{remark}

\begin{remark} \label{slope-rem2}
As announced in Remark \ref{slope-rem}, let us assume that the slope $\ord_p\bigl(a_p(g_k)\bigr)$ is constant for all admissible $k\in U\cap\mathscr W_{N,\cla}$; call $s$ this common value. It follows immediately from \eqref{slope-ineq} that $k\geq s+1$. Conversely, every $k\in U\cap\mathscr W_{N,\cla}$ with $k>s+1$ is admissible (\cite[Corollary B5.7.1]{Coleman}).
\end{remark}

\begin{remark}
One can show that if the slopes $\ord_p\bigl(a_p(g_k)\bigr)$ are constant (\emph{cf.} Remark \ref{slope-rem}), then there is at most one admissible $k_0\in U\cap\mathscr W_{N,\cla}$ such that $g_{k_0}$ is $p$-new: see, \emph{e.g.}, \cite[p. 169]{GSS} for details.
\end{remark}

\subsection{Big Galois representations} \label{big-galois-subsec}

Following \cite{LZ} (\emph{cf.} also \cite{BL}), we introduce the big Galois representation attached to a Coleman family. Here, with notation close to that of \cite{LZ}, we simply sketch the construction of this representation, referring to \cite[Section 4]{LZ} and \cite{BL} for details.

\subsubsection{Some function spaces}

Let $\Lambda_U$ be as in \S \ref{coleman-subsubsec}, let $\Lambda(\Z_p^\times)\defeq\Z_p[[\Z_p^\times]]$ be the Iwasawa algebra of $\Z_p^\times$ and let 
\begin{equation} \label{universal-eq}
\kappa_U:\Z_{p,N}^\times\longepi\Z_p^\times\longmono\Lambda(\Z_p^\times)^\times\longrightarrow\Lambda_U^\times 
\end{equation}
be the universal character, where $\Z_{p,N}^\times\twoheadrightarrow\Z_p^\times$ is the canonical projection. Let us define the subset $T\defeq p\Z_p\times \Z_p^\times$ of $\Z_p\times \Z_p$ and write $A_U^\circ(T)$ for the set of functions $h:T\rightarrow\Lambda_U$ such that
\begin{itemize}
\item $h(\gamma t)=\kappa_U(\gamma)h(t)$ for all $\gamma\in \Z_{p,N}^\times$;
\item $z\mapsto h(pz,1)$ is a locally analytic function on $\Z_p$.
\end{itemize} 
Of course, the multiplication $\gamma t$ is induced by $\Z_{p,N}^\times\twoheadrightarrow\Z_p^\times$. The abelian group $A_U^\circ(T)$ has an obvious $\Lambda_U$-module structure and we consider its dual $D_U^\circ(T)\defeq\Hom_{\Lambda_U}\bigl(A_U^\circ(T),\Lambda_U\bigr)$. Similarly, for every $k\in U\cap \mathscr W_{N,\cla}$ let $A_k^\circ(T)$ be the set of functions $h:T\rightarrow\cO_L$ such that 
\begin{itemize}
\item $h(\gamma z)=\gamma^{k-2}h(z)$ for all $\gamma \in \Z_{p,N}^\times$;
\item $z\mapsto h(pz,1)$ is a locally analytic function on $\Z_p$. 
\end{itemize}
Let $D_k^\circ(T)\defeq\Hom_{\cO_L}\bigl(A_k^\circ(T),\cO_L\bigr)$ be the $\cO_L$-linear dual of $A_k^\circ(T)$. The specialization (\emph{i.e.}, evaluation) map $\Lambda_U\rightarrow \cO_L$ induces a group homomorphism
\begin{equation} \label{D-eq}
D_U^\circ(T) \longrightarrow D_k^\circ(T). 
\end{equation}
Let $P_k^\circ$ be the space of polynomial functions with coefficients in $\cO_L$ on $\Z_p\times \Z_p$ that are homogeneous of degree $k-2$. Restriction to $T$ produces a natural embedding $P_k^\circ \hookrightarrow A_k^\circ(T)$, which in turn induces a canonical surjection 
\begin{equation} \label{onto-eq}
D_k^\circ(T) \longepi (P_k^\circ)^*=\mathrm{TSym}^{k-2}(\cO_L\times \cO_L).
\end{equation}
Composing \eqref{D-eq} and \eqref{onto-eq}, we obtain a map
\begin{equation} \label{spec}
D_U^\circ(T) \longrightarrow D_k^\circ(T)\longrightarrow \mathrm{TSym}^{k-2}(\cO_L\times \cO_L)
\end{equation}
that is Hecke-equivariant (see, \emph{e.g.}, \cite[\S 4.2]{LZ} for details).

\subsubsection{Some modular curves}

For every $n\in\N$ let $Y(p^n,Np^n)$ be the modular curve defined in \cite[\S 2.1]{Kato}; consider the pro-scheme $Y(p^\infty,N p^\infty)\defeq\varprojlim_n Y(p^n,N p^n)$. As in \S \ref{representations-subsubsec}, let $Y(N,p)$ be the modular curve over $\Z[1/Np]$ classifying elliptic curves with $\Gamma_0(N)\cap\Gamma_1(p)$-level structure. There is a natural projection 
\begin{equation} \label{Y-proj-eq}
Y(p^\infty,N p^\infty)\longrightarrow Y(N,p). 
\end{equation}
 As explained in \cite[Proposition 4.4.3]{LZ}, there exist a sheaf $\mathcal{D}_U^\circ$ of $\Lambda_U$-modules and a sheaf $\mathcal{D}_k^\circ$ of $\cO_L$-modules on $Y(N,p)$ whose pullbacks to $Y(p^\infty, N p^\infty)$ under \eqref{Y-proj-eq} are the constant sheaves $D_U^\circ(T)$ and $D_k^\circ(T)$, respectively. Furthermore, it turns out (\emph{cf.} \cite[Proposition 4.4.2]{LZ}) that the sequence in \eqref{spec} induces a sequence 
\begin{equation} \label{D-composition-eq}
\mathcal{D}_U^\circ \longrightarrow \mathcal{D}_k^\circ\longrightarrow \mathrm{TSym}^{k-2}(\mathscr{H}_{\cO_L}) 
\end{equation}
of pro-sheaves on $Y(N,p)$, which is Hecke-equivariant. To ease our notation, set $Y\defeq Y(N,p)$ and denote by $Y_{\bar\Q}$ the base change of $Y$ to $\bar\Q$. The (twisted) \'etale cohomology group $H^1_{\text{\'et}}\bigl(Y_{\bar\Q},\mathcal{D}_U^\circ\bigr)(1)$ is a $\Lambda_U$-module that is equipped with a natural continuous action of $G_\Q$ unramified outside $Np\infty$. Analogously, $H^1_{\text{\'et}}\bigl(Y_{\bar\Q},\mathcal{D}_k^\circ\bigr)(1)$ is an $\cO_L$-module with a continuous action of $G_\Q$ unramified outside $Np\infty$. The Hecke operators $T_n^\circ$ induced on $H^1_{\text{\'et}}\bigl(Y_{\bar\Q},\mathcal{D}_U^\circ\bigr)(1)$ and $H^1_{\text{\'et}}\bigl(Y_{\bar\Q},\mathcal{D}_k^\circ\bigr)(1)$ by the corresponding actions on sheaves commute with the Galois action (see, \emph{e.g.}, \cite[\S 3.3]{AIS}), so the map in \eqref{D-composition-eq} gives rise to a map
\begin{equation} \label{specialization}
H^1_{\text{\'et}}\bigl(Y_{\bar\Q},\mathcal{D}_U^\circ\bigr)(1)\longrightarrow H^1_{\text{\'et}}\bigl(Y_{\bar\Q},\mathcal{D}_k^\circ\bigr)(1)\longrightarrow H^1_{\text{\'et}}\bigl(Y_{\bar\Q},\mathrm{TSym}^{k-2}(\mathscr{H}_{\cO_L})\bigr)(1)
\end{equation}
that is compatible with both the Galois action and the Hecke action.

\subsubsection{Big Galois representations}

Let $f$ be a noble eigenform of weight $2$ and tame level $N$ in the sense of Definition \ref{noble-def}. By Theorem \ref{interpolation-thm}, there exist an open disc $U\ni 2$ in $\mathscr W_N$ and a unique $p$-adic Coleman family $\f_U=\sum_{n\geq1}a_n(\f_U)q^n\in\Lambda_U[[q]]$ of tame level $N$ over $U$ passing through $f$. Set
\[ \V_{\f_U}\defeq\bigcap_{n\geq1}\Bigl(H^1_{\text{\'et}}\bigl(Y_{\bar\Q},\mathcal{D}_U^\circ\bigr)(1)[1/p]\Bigr)^{\!T_n^\circ=a_n(\f_U)}, \]
which inherits a continuous action of $G_\Q$ from that on $H^1_{\text{\'et}}\bigl(Y_{\bar\Q},\mathcal{D}_U^\circ\bigr)(1)$. For every admissible $k\in U\cap\mathscr W_{N,\cla}$, write $\wp_k$ for the prime ideal of $\Lambda_U$ corresponding to the character $z\mapsto z^{k-2}$. With $\mathcal B_U$ as in \S \ref{coleman-subsubsec}, it is proved in \cite[Theorem 4.6.6]{LZ} that, up to shrinking $U$ if necessary, $\V_{\f_U}$ is a free $\mathcal B_U$-module of rank $2$ such that for every $k$ as above there is an isomorphism 
\[ \V_{\f_U}\big/\wp_k\V_{\f_U}\overset\simeq\longrightarrow V_{f_k}^* \]
of $L$-linear representations of $G_\Q$. In the rest of this paper, we always assume that $U$ is small enough so that the above-mentioned freeness condition is satisfied. Let
\[ \rho_{\f_U}:G_\Q\longrightarrow\GL(\V_{\f_U})\simeq\GL_2(\mathcal B_U) \]
be the associated group homomorphism. By a standard density argument, the specialization property implies that
\begin{equation}\label{key_prop}
\tr\bigl(\rho_{\f_U}(\Frob_\ell^{-1})\bigr)=a_\ell(\f_U)
\end{equation}
for every prime number $\ell\nmid Np$, where $\Frob_\ell\in G_\Q$ is an arithmetic Frobenius at $\ell$.  It turns out (\cite[Theorem 4.6.6]{LZ}) that $\V_{\f_U}$ is a direct summand of $H^1_{\text{\'et}}\bigl(Y_{\bar\Q},\mathcal{D}_U^\circ\bigr)(1)[1/p]$, so there is a natural (projection) map
\begin{equation} \label{summand-eq}
H^1_{\text{\'et}}\bigl(Y_{\bar\Q},\mathcal{D}_U^\circ\bigr)(1)[1/p]\longepi\V_{\f_U}.
\end{equation}
As in \cite{BL}, we consider the reflexive hull, \emph{i.e.}, the double dual, $\V^\circ_{\f_U}$ of the image $\mathcal V_{\f_U}$ in $\V_{\f_U}$ of the $\Lambda_U$-module $H^1_{\text{\'et}}(Y_{\overline{\Q}},\mathcal{D}_U^\circ)(1)$ under the map induced by \eqref{summand-eq}. The $\Lambda_U$-module $\mathcal V_{\f_U}$ is finitely generated and, since $\Lambda_U$ is noetherian, the same is true of $\V^\circ_{\f_U}$. On the other hand, $\Lambda_U$ is a $2$-dimensional regular local ring, so the finitely generated, reflexive $\Lambda_U$-module $\V^\circ_{\f_U}$ is free (see, \emph{e.g.}, \cite[Proposition 5.1.9]{NSW}). It follows that $\V^\circ_{\f_U}$ is a free $\Lambda_U$-module of rank $2$ equipped with a continuous action of $G_\Q$ unramified outside $Np\infty$. Let us write
\begin{equation} \label{rho-circ-eq}
\rho^\circ_{\f_U}:G_\Q\longrightarrow\GL\bigl(\V^\circ_{\f_U}\bigr)\simeq\GL_2(\Lambda_U) 
\end{equation}
for the associated homomorphism. 

\begin{remark}
Since $\mathcal V_{\f_U}$ is torsion-free over $\Lambda_U$, there is a natural identification
\[ \V^\circ_{\f_U}=\bigcap_{\substack{\wp\in\Spec(\Lambda_U)\\[0.5mm]\mathrm{ht}(\wp)=1}}\mathcal V_{\f_U,\wp}, \]
where $\wp$ varies over all the height $1$ prime ideals of $\Lambda_U$ and $\mathcal V_{\f_U,\wp}$ is the localization of $\mathcal V_{\f_U}$ at $\wp$ (see, \emph{e.g.}, \cite[Lemma 5.1.2, (ii)]{NSW}). 
\end{remark}

There are $G_\Q$-equivariant specialization maps
\begin{equation}\label{spec_latt}
\V^\circ_{\f_U}\longrightarrow\V^\circ_{\f_U}\big/\wp_k\V^\circ_{\f_U}\longrightarrow T_{f_k}^* 
\end{equation}
induced by  \eqref{specialization}. Finally, recall the universal character $\kappa_U$ from \eqref{universal-eq} and set 
\[ \T_{\f_U}^\dagger\defeq p^{-\mathscr C}\V^\circ_{\f_U}\Bigl(1-\frac{\kappa_U}{2}\Bigr) \]
for a suitably large $\mathscr C\in\Z$ that we choose as explained in \cite[Remark A.1]{BL}. There is an associated homomorphism
\[ \rho^\dagger_{\f_U}:G_\Q\longrightarrow\GL\bigl(\T^\dagger_{\f_U}\bigr)\simeq\GL_2(\Lambda_U). \]
For every admissible $k\in U\cap\mathscr W^0_{N,\cla}$ there is a $G_\Q$-equivariant (specialization) map
\begin{equation} \label{sp}
\text{sp}_k:\T_{\f_U}^\dagger\longrightarrow T_{f_k}^*\Bigl(1-\frac{k}{2}\Bigr)=T_{f_k}^{*,\dag}.
\end{equation}
If $\T_{\f_U,\wp_k}^\dagger$ is the localization of $\T_{\f_U}^\dagger$ at $\wp_k$, then we get isomorphisms of $G_\Q$-representations
\[ \T_{\f_U,\wp_k}^\dagger\big/\wp_k\T_{\f_U,\wp_k}^\dagger\overset\simeq\longrightarrow V_{f_k}^{*,\dag}\overset\simeq\longrightarrow V_{f_k}^\dag, \] 
the former being induced by the map $\text{sp}_k$ from \eqref{sp}, the latter coming from an incarnation of Poincar\'e duality (\emph{cf.} \S \ref{representations-subsubsec}).

\subsubsection{Residual representations} \label{residual-subsubsec}

Denote by $\m_{\Lambda_U}$ the maximal ideal of $\Lambda_U$ and let $\F_{\Lambda_U}\defeq\Lambda_U/\m_{\Lambda_U}$ be the residue field of $\Lambda_U$, which we view as a subfield of $\bar\F_p$. Reduction modulo $\m_{\Lambda_U}$ yields a $2$-dimensional representation $\bar\V_{\f_U}^\circ\defeq\V_{\f_U}^\circ\big/\m_{\Lambda_U}\V_{\f_U}^\circ$ of $G_\Q$ over $\F_{\Lambda_U}$ that is unramified outside $pN\infty$. Let us write
\[ \bar{\rho}^\circ_{\f_U}\colon G_\Q\longrightarrow\GL\bigl(\bar\V_{\f_U}^\circ\bigr)\simeq\GL_2(\F_{\Lambda_U}) \]
for the induced homomorphism, which is just the reduction modulo $\m_{\Lambda_U}$ of the map $\rho^\circ_{\f_U}$ in \eqref{rho-circ-eq}. Let $\bar{a}_\ell(\f_U)\in\F_{\Lambda_U}$ stand for the class of $a_\ell(\f_U)$ modulo $\m_{\Lambda_U}$; as a consequence of \eqref{key_prop}, there is an equality 
\[ \tr\bigl(\bar\rho^\circ_{\f_U}(\Frob_\ell^{-1})\bigr)=\bar{a}_\ell(\f_U) \]
for every prime $\ell\nmid pN$. We deduce that
\[ \tr\bigl(\bar\rho^\circ_{\f_U}(\Frob_\ell^{-1})\bigr)=\tr\bigl(\bar\rho_{f_k}^*(\Frob_\ell^{-1})\bigr) \]
for all primes $\ell\nmid pN$ and all admissible $k\in U\cap\mathscr{W}^0_{N,\cla}$. It follows that the specialization map \eqref{spec_latt} induces an isomorphism
\[ \bar\V^\circ_{\f_U}\overset{\simeq}{\longrightarrow}\bar{T}_{f_k}^* \]
for every admissible $k\in U\cap\mathscr{W}^0_{N,\cla}$. In particular, $\bar{T}_f^*\simeq\bar{T}_{f_k}^*$ for every $k$ as above.

Now let $\varepsilon_{\text{cyc}}\colon G_\Q\rightarrow \Z_p^\times$ be the $p$-adic cyclotomic character and write $\bar{\varepsilon}_{\text{cyc}}\colon G_\Q\rightarrow \F_p^\times$ for the reduction of $\varepsilon_{\text{cyc}}$ modulo $p$. If $k\in U\cap\mathscr{W}_{N,\text{cl}}$ is such that $k\equiv 2 \pmod{2(p-1)}$, then $\bar{\varepsilon}_{\text{cyc}}^{1-k/2}$ is trivial, hence $\bar{T}_{f_k}^{*,\dag}=\bar{T}_{f_k}^*$. It follows that for any such weight $k$ the specialization maps in \eqref{sp} give rise to a commutative diagram
\begin{equation} \label{diagram}
\xymatrix@C=38pt@R=25pt{
T_f^\dag\ar[d]&T_f^{*,\dag}\ar[d]\ar[l]_-\simeq & \T_{\f_U}^\dag \ar[l]_-{\mathrm{sp}_2}\ar[r]^-{\mathrm{sp}_k} & T_{f_k}^{*,\dag}\ar[d]\ar[r]^-\simeq & T_{f_k}^\dag\ar[d]\\
\bar{T}_f^\dag& \bar{T}_f^*\ar[l]_-\simeq &  & \bar{T}_{f_k}^*\ar[ll]_-\simeq & \bar{T}_{f_k}^\dag \ar[l]_-\simeq
}
\end{equation}
where $\mathrm{sp}_2$ and $\mathrm{sp}_k$ are the maps from \eqref{sp}, the vertical arrows are induced by reduction modulo the maximal ideal of $\cO_L$ and the isomorphisms come from \eqref{Poinc}.

\section{Root numbers and Greenberg's conjecture} \label{greenberg-sec}

We recall properties of root numbers in Coleman families and then state a special case of
Greenberg's conjecture on analytic ranks of families of eigenforms.

\subsection{Root numbers in Coleman families} 

We review results on the variation of root numbers in (and attach a root number to) a Coleman family of modular forms.

\subsubsection{Root numbers and analytic ranks of newforms}

Let $g\in S_k(\Gamma_0(M))$ be a normalized newform of weight $k\geq2$ and write $L(g,s)$ for its (complex) $L$-function. The completed $L$-function of $g$ is $\Lambda(g,s)\defeq(2\pi)^{-s}\Gamma(s)M^{s/2}L(g,s)$, where $\Gamma(s)$ is the classical $\Gamma$-function. It is well known that $\Lambda(g,s)$ satisfies a functional equation 
\begin{equation} \label{L-functional-eq}
\Lambda(g,s)=\varepsilon(g)\Lambda(g,k-s) 
\end{equation}
where $\varepsilon(g)\in\{\pm1\}$ is the \emph{root number} of $g$ (see, \emph{e.g.}, \cite[Theorem 9.27]{Knapp}). 

\begin{definition} \label{analytic-rank-def}
The \emph{analytic rank} of $g$ is $r_\an(g)\defeq\ord_{k/2}L(g,s)\in\N$. 
\end{definition}

The number $\varepsilon(g)$ controls the parity of $r_\an(g)$; namely, $r_\an(g)$ is even if $\varepsilon(g)=1$ and is odd if $\varepsilon(g)=-1$. In other words, there is a congruence
\begin{equation} \label{analytic-congruence-eq}
r_\an(g)\equiv\frac{1-\varepsilon(g)}{2}\pmod{2}. 
\end{equation}
Equivalently, $\varepsilon(g)={(-1)}^{r_\an(g)}$.

\begin{remark} \label{analytic-rank-rem}
In order for the notion of analytic rank to make sense it is not necessary that the eigenform $g$ be a newform; in particular, $g$ may not satisfy a functional equation of the shape \eqref{L-functional-eq}. 
\end{remark}

\begin{remark}
The reader is referred, \emph{e.g.}, to \cite[Theorems 4.3.12 and 4.6.15]{Miyake} for the case of a newform on $\Gamma_0(M)$ whose character is not necessarily trivial.
\end{remark}

\subsubsection{Root numbers of Coleman families} \label{root-numbers-subsubsec}

Let $\f_U$ be a Coleman family as in \S \ref{coleman-subsubsec}. In the lines that follow, $k$ will always stand for an admissible weight in $U\cap\mathscr W_{N,\cla}^0$, with $\mathscr W_{N,\cla}^0$ as in \eqref{even-classical-weights-eq}. Let us write $s$ for the common value of the slopes $\ord_p\bigl(a_p(f_k)\bigr)$ (\emph{cf.} Remark \ref{slope-rem}); as pointed out in Remark \ref{slope-rem2}, we necessarily have $k\geq s+1$ and, conversely, all $k\in U\cap\mathscr W_{N,\cla}$ with $k>s+1$ are admissible. For every $k$, recall from \eqref{L-functional-eq} the functional equation
\[ \Lambda(f_k^\flat,s)=\varepsilon(f_k^\flat)\Lambda(f_k^\flat,k-s), \]
where $\varepsilon(f_k^\flat)\in\{\pm1\}$ is the root number of $f_k^\flat$. Let $\omega_{N,k}\in\{\pm1\}$ be the eigenvalue of the Atkin--Lehner involution acting on $f_k^\flat$; as remarked, \emph{e.g.}, in \cite[p. 170]{GSS}, the product 
\begin{equation} \label{omega-eq2}
\omega_N(\f_U)\defeq{(-1)}^{k/2}\omega_{N,k}\in\{\pm1\}
\end{equation}
is independent of $k$ and the equality 
\begin{equation} \label{epsilon-omega-eq}
\varepsilon(f_k^\flat)=\omega_N(\f_U)
\end{equation}
holds for every admissible $k\in U\cap\mathscr W_{N,\cla}^0$ (take the character $\chi$ in \cite[p. 170]{GSS} to be trivial). In particular, the root number $\varepsilon(f_k^\flat)$ is constant for all $k$ as above.

\begin{definition} \label{root-number-def}
The integer $\varepsilon(\f_U)\defeq\omega_N(\f_U)\in\{\pm1\}$ is the \emph{root number of $\f_U$}. 
\end{definition}

Again, it is worth bearing in mind that Definition \ref{root-number-def}, which might appear somewhat misleading in that $k$ on the right hand side of \eqref{omega-eq2} varies over $U\cap\mathscr W_{N,\cla}^0$, is justified by the fact that, as in \cite{GSS} and \cite{LZ}, here we consider specializations of even weight and trivial character only. 

As was done in \cite[\S 3.1]{Vigni} for Hida families, we introduce the following notion.

\begin{definition} \label{minimal-rank-def}
The \emph{minimal admissible generic rank} of $\f_U$ is
\[ r_{\min}(\f_U)\defeq\frac{1-\varepsilon(\f_U)}{2}. \] 
\end{definition}

Equivalently, $r_{\min}(\f_U)$ is the smallest analytic rank of an even weight specialization of $\f_U$ that is allowed by the functional equation: $r_{\min}(\f_U)=0$ if $\varepsilon(\f_U)=1$ and $r_{\min}(\f_U)=1$ if $\varepsilon(\f_U)=-1$.

\subsection{Greenberg's conjecture and consequences}

We want to recall the formulation, in the special case of Coleman families, of a ``visionary'' conjecture of R. Greenberg on analytic ranks in families of newforms.

\subsubsection{Greenberg's conjecture}

The conjecture we are about to state, which is concerned with the analytic rank of an even weight form in a Coleman family $\f_U$, is essentially due to Greenberg (\cite{Greenberg-CRM}). 

\begin{conjecture}[Greenberg] \label{greenberg-conj}
The equality
\[ r_\an(f_k^\flat)=r_{\min}(\f_U) \]
holds for all but finitely many admissible $k\in U\cap\mathscr W_{N,\cla}^0$.
\end{conjecture}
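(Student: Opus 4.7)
The plan is to reduce Conjecture~\ref{greenberg-conj} to a non-vanishing statement for a two-variable $p$-adic $L$-function $L_p(\f_U,\kappa,s)$ attached to the Coleman family $\f_U$, along the lines of the construction of Bella\"iche--Pollack--Stevens (and of Panchishkin) referenced in the paper's appendix. This is a rigid analytic function of $\kappa\in U$ and of $s$ varying in a $p$-adic neighbourhood of the critical strip, whose specializations at admissible classical weights $k\in U\cap\mathscr W_{N,\cla}^0$ recover, up to explicit Euler and period factors, the complex central values $L(f_k^\flat,j)$ for $j$ in the critical range. The functional equation for each $L(f_k^\flat,s)$, paired with the constancy of the sign $\varepsilon(f_k^\flat)=\varepsilon(\f_U)$ recorded in \eqref{epsilon-omega-eq}, promotes to a two-variable functional equation for $L_p$ relating $(\kappa,s)$ to $(\kappa,\kappa-s)$ with global sign $\varepsilon(\f_U)$.

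First I would introduce the restriction to the central line,
\begin{equation}
\mathcal L(\kappa)\defeq L_p(\f_U,\kappa,\kappa/2),
\end{equation}
a rigid analytic function on the connected wide open disc $U$. If $\varepsilon(\f_U)=+1$, so that $r_{\min}(\f_U)=0$, then $\mathcal L$ is not forced to vanish. A proof that $\mathcal L\not\equiv 0$ on $U$ would, by the identity principle for rigid analytic functions on a connected disc, confine the zero locus of $\mathcal L$ to a discrete set and, via the interpolation formula, force $L(f_k^\flat,k/2)\neq 0$, hence $r_\an(f_k^\flat)=0=r_{\min}(\f_U)$, for all but finitely many admissible $k\in U\cap\mathscr W_{N,\cla}^0$. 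If instead $\varepsilon(\f_U)=-1$, the two-variable functional equation forces $\mathcal L\equiv 0$ identically and the strategy must be rerun on the first-order object
\begin{equation}
\mathcal L'(\kappa)\defeq\left.\frac{\partial L_p(\f_U,\kappa,s)}{\partial s}\right|_{s=\kappa/2};
\end{equation}
non-vanishing of $\mathcal L'$ on $U$ then yields $L'(f_k^\flat,k/2)\neq 0$, hence $r_\an(f_k^\flat)=1=r_{\min}(\f_U)$, outside a finite exceptional set.

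The substantive input is thus the non-vanishing of $\mathcal L$ (resp.~$\mathcal L'$), and here I would try to recycle the Heegner-cycle machinery that already governs Theorems~A and~B. Concretely, the big Heegner-type classes of B\"uy\"ukboduk--Lei interpolate Nekov\'a\v{r}'s cycles $y_{k,K}$ along $\f_U$, with controlled integrality as explained in \S\ref{coleman-sec}; a Gross--Zagier--Zhang formula in families would, in principle, translate the non-triviality of such a big class at any single admissible anchor weight $k_0\in U\cap\mathscr W_{N,\cla}^0$ into non-vanishing of $L^{(r_{\min}(\f_U))}(f_{k_0}^\flat,k_0/2)$, and rigidity would then spread this conclusion to a cofinite subset of $U\cap\mathscr W_{N,\cla}^0$, establishing the conjecture.

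The main obstacle is precisely this anchoring step: to prove Conjecture~\ref{greenberg-conj} in its stated generality one must produce, for \emph{every} Coleman family $\f_U$, at least one admissible weight $k_0$ at which $r_\an(f_{k_0}^\flat)=r_{\min}(\f_U)$, \emph{without} assuming a priori that some distinguished specialization (such as the weight-$2$ form $f$) already has minimal analytic rank. This is the input that Theorems~A and~B inject by hand through the hypothesis $r_\an(f)\in\{0,1\}$, and removing it seems to require either an unconditional non-vanishing theorem for two-variable $p$-adic $L$-functions of Coleman families or an Euler-system propagation argument locating the anchor weight intrinsically. The rank-one branch ($\varepsilon(\f_U)=-1$) is significantly more delicate, since controlling $\mathcal L'$ along the central line demands a $p$-adic Gross--Zagier formula in families strong enough to handle the derivative, precisely the reason Theorem~B must impose the non-degeneracy hypothesis on the Gillet--Soul\'e height pairing of \cite{Zhang-heights}.
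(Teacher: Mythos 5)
The statement you are attempting to prove is labelled \emph{Conjecture} in the paper (Conjecture~\ref{greenberg-conj}), and the paper gives no proof of it; there is no ``paper's own proof'' to compare against. What the paper proves are conditional partial results in its direction: Theorem~B establishes $r_\an(f_k^\flat)=r_{\min}(\hf_U)$ only under the a~priori hypothesis $r_\an(f)\in\{0,1\}$ on the weight-$2$ anchor, only for $p$ outside a finite set, only for $k\equiv 2\pmod{2(p-1)}$, and only assuming non-degeneracy of the Gillet--Soul\'e height pairings of Zhang; and Theorem~\ref{main-appendix-thm} in the appendix proves, via the Bella\"iche--Pollack--Stevens two-variable $p$-adic $L$-function, the rank-$0$ \emph{equivalence} ``there exists $k\in\mathscr U$ with $r_\an(g_k)=0$'' if and only if ``$r_\an(g_k)=0$ for all but finitely many $k\in\mathscr U$''. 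Neither result is the conjecture itself.

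Your proposal correctly identifies both of the paper's tools, and to your credit you yourself name the two genuine gaps: (i) without an anchor specialization at which the minimal rank is already known, the non-vanishing of $\mathcal L=L_p(\f_U,\kappa,\kappa/2)$ on $U$ cannot be established --- this is exactly the hypothesis the paper imports through $r_\an(f)\in\{0,1\}$, and Proposition~\ref{coro:ranL} makes explicit that what the interpolation formula delivers is an \emph{equivalence} at each non-trivial-zero weight, not an unconditional non-vanishing; and (ii) the sign $-1$ case requires control over the central derivative $\mathcal L'$, for which no $p$-adic Gross--Zagier formula in Coleman families sufficient for your purposes exists, which is precisely why the paper retreats to the archimedean Zhang formula under Assumptions~\ref{main-ass3} and~\ref{main-ass4}. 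There is an additional point your sketch glosses over: the two-variable functional equation for $L_p^\pm$ relating $(\kappa,s)$ to $(\kappa,\kappa-s)$ with global sign $\varepsilon(\f_U)$ is not established in the paper (which cites only the interpolation formula from \cite{bellaiche}); that functional equation would itself have to be proved before the asserted forced vanishing $\mathcal L\equiv 0$ in the $\varepsilon(\f_U)=-1$ branch could be invoked. In short, your proposal is a plausible roadmap consistent with the paper's methods, but it is not a proof, and it cannot be one without resolving the open problems you flag; the paper itself treats those problems as open and proves only the conditional statements above.
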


Actually, Greenberg's conjecture in its original formulation is much broader. Namely, it predicts that if $g$ varies over all normalized newforms of level $\Gamma_1(M)$ with $M$ divisible only by prime numbers in a fixed set (with no restriction on the weight), then $r_\an(g)$ should be as small as allowed by the functional equation, with at most finitely many exceptions. Thus, Conjecture \ref{greenberg-conj} is to be viewed as a special case of Greenberg's conjecture; notice that one of the main results of \cite{Vigni} treats an analogue of Conjecture \ref{greenberg-conj} for Hida families (\emph{i.e.}, for $p$-ordinary families). The reader is referred to \cite[p. 101, Conjecture]{Greenberg-CRM} for the original conjecture for newforms of weight $2$ and to the discussion following it for a conjecture for newforms of arbitrary weight. Evidence for Greenberg's conjecture is still very scarce: see, \emph{e.g.}, \cite[\S 3.2]{Vigni} for details. Here we would like to remark that results on the counterpart of Conjecture \ref{greenberg-conj} for Hida families have been obtained in rank $0$ and higher weight by Bertolini--Darmon (\cite[Corollary 4]{BD-rational}) and by Howard (\cite[Theorem 7]{Howard-derivatives}), while results in rank $1$ and weight $2$ have been proved by Howard (\cite{Howard-derivatives}). Finally, it is worth pointing out that results in the circle of ideas of Conjecture \ref{greenberg-conj} for Hida families can also be found in the recent paper \cite{disegni} by Disegni (see, in particular, \cite[Theorem G]{disegni}).

Under some technical assumptions (among which, the non-degeneracy of certain height pairings \emph{\`a la} Gillet--Soul\'e between Heegner cycles, which is assumed in \cite{Vigni} as well), in this paper we prove a result (Theorem B of the introduction) in the direction of Conjecture \ref{greenberg-conj}, both when $r_{\min}(\f_U)=0$ and when $r_{\min}(\f_U)=1$. As a matter of fact, we shall prove Conjecture \ref{greenberg-conj} for a $p$-adic family $\f_U$ where $p$ is allowed to vary over all but finitely many primes. 

\subsubsection{On algebraic $\fP$-ranks}

Recall the algebraic $\fP$-rank of a newform that was introduced in Definition \ref{algebraic-rank-def} (\emph{cf.} also Notation and terminology \ref{notation-rem}). Combining Conjecture \ref{greenberg-conj} with the conjectures of Birch--Swinnerton-Dyer (\cite[\S 1]{Tate-BSD}) and of Beilinson--Bloch--Kato (\cite[Conjecture 2.10]{LV}) on $L$-functions of modular forms, it is natural to propose 

\begin{conjecture} \label{main-conj}
The equality
\[ r_{\alg,\fP}\bigl(f_k^\flat/\Q\bigr)=r_{\min}\bigl(\f_U\bigr) \] 
holds for all but finitely many admissible $k\in U\cap\mathscr W_{N,\cla}^0$.
\end{conjecture}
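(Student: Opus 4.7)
The plan is to derive Conjecture \ref{main-conj} by combining the paper's Heegner-cycle-in-Coleman-families machinery with a higher-weight Kolyvagin Euler system, using the analytic analogue (Greenberg) as the key input one would need. I would split according to $r_{\min}(\f_U)\in\{0,1\}$, dictated by $\varepsilon(\f_U)$. The parity constraint \eqref{analytic-congruence-eq} already forces $r_{\an}(f_k^\flat)\equiv r_{\min}(\f_U)\pmod 2$ and the Beilinson--Bloch--Kato prediction forces the same congruence for $r_{\alg,\fP}$, so the substance is the generic equality at the value predicted by the functional equation.

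For the lower bound, I would choose an imaginary quadratic field $K$ satisfying the Heegner hypothesis for $N$ and whose splitting behaviour at $p$ places the Coleman family $\f_U$ in the Panchishkin range of Büyükboduk--Lei, producing a big integral class $\mathfrak z_{\f_U}\in H^1(K,\T_{\f_U}^\dagger)$ whose specializations recover Nekov\'a\v{r}'s Heegner cycles $y_{k,K}\in\Lambda_{f_k^\flat}(K)$ up to controlled periods. Diagram \eqref{diagram} then ensures that if the reduction $\bar y_{k_0,K}$ modulo $\fP$ is non-zero at a single admissible weight $k_0$, the same holds for $y_{k,K}$ at every $k\equiv k_0\pmod{2(p-1)}$ with at most finitely many exceptions. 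A descent from $K$ to $\Q$ in the style of \cite{Vigni}, separating the sign $+1$ and sign $-1$ cases via appropriate anticyclotomic twists, promotes the non-torsion cycle in $\Lambda_{f_k^\flat}(K)$ to a non-torsion class in $\Lambda_{f_k^\flat}(\Q)$, establishing $r_{\alg,\fP}(f_k^\flat/\Q)\geq r_{\min}(\f_U)$.

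For the reverse inequality I would feed the non-torsion base class into Kolyvagin's method in Nekov\'a\v{r}'s higher-weight incarnation for Chow groups of Kuga--Sato varieties, obtaining an upper bound of $r_{\min}(\f_U)$ on the Bloch--Kato Selmer group $H^1_f(\Q,T^\dagger_{f_k^\flat})$ and hence on $\Lambda_{f_k^\flat}(\Q)$. The prerequisites --- large residual image of $\bar\rho^\circ_{\f_U}$, freeness of the relevant cohomology module, availability of Kolyvagin primes --- hold generically along $\f_U$ by Ribet-type theorems applied to the residual representation $\bar{\V}^\circ_{\f_U}$ identified in \S \ref{residual-subsubsec}, possibly after shrinking $U$.

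The principal obstacle, and the reason Conjecture \ref{main-conj} remains out of reach, lies entirely in producing the non-trivial base class $\bar y_{k_0,K}$ at \emph{some} admissible weight: under current technology this requires a non-vanishing $L$-value or $L$-derivative to be detected via a Gross--Zagier or Zhang-type formula, and those formulae are only effective at a single weight at a time. The paper's Theorems A and B solve the dual problem by fixing $k_0=2$ and varying $p$, which circumvents the difficulty at the cost of a congruence condition on $k$. A genuinely unconditional proof of Conjecture \ref{main-conj}, with $p$ and $\f_U$ fixed and $k$ varying, would require either Greenberg's Conjecture \ref{greenberg-conj} itself --- to furnish the non-vanishing of $L^{(r_{\min})}(f_k^\flat,s)$ at a generic weight --- or an unconditional propagation of non-triviality for Heegner cycles along the Coleman family that bypasses the weight-$2$ seed, neither of which is currently available.
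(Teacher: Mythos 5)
You correctly recognize that the statement is a conjecture, not a theorem: the paper offers no proof, only the one-sentence heuristic that Conjecture~\ref{greenberg-conj} (Greenberg) combined with the Birch--Swinnerton-Dyer and Beilinson--Bloch--Kato conjectures would imply it, and then proves partial results (Corollaries~\ref{sha-coro} and~\ref{sha-coro2}) in its direction. Your assessment of the conjecture's status is therefore accurate. Your route, however, is genuinely different from the paper's motivation: rather than invoking Greenberg plus BSD/BBK as black boxes, you sketch the Heegner-cycle-plus-Kolyvagin machinery directly (B\"uy\"ukboduk--Lei interpolation, diagram~\eqref{diagram}, descent from $K$ to $\Q$, Nekov\'a\v{r}'s bound on Chow groups), which is in effect an honest elaboration of how the paper actually proves Theorems A and B, transposed to the setting where $p$ is fixed and $k$ varies. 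You also correctly pinpoint the obstruction (no way to produce the non-torsion seed cycle at a single admissible weight without already knowing a non-vanishing result there) and correctly observe that the paper circumvents it by fixing $k_0=2$ and varying $p$ instead.

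One point is worth sharpening. Even granting a non-torsion seed $y_{k_0,K}$ at some higher weight $k_0$, propagation via diagram~\eqref{diagram} relies on the residual identification $\bar T_{f_{k_0}}^\dagger\simeq\bar T_{f_k}^\dagger$, which is only available when $k\equiv k_0\pmod{2(p-1)}$. So your sketch, if completed, would establish the conjecture only for weights in one congruence class modulo $2(p-1)$, not for all but finitely many admissible $k\in U\cap\mathscr W_{N,\cla}^0$ as the statement demands. Removing the congruence condition is strictly harder than what your (and the paper's) Heegner-cycle mechanism can reach; the paper only achieves it in the analytic rank-zero setting of Appendix~\ref{appendix}, and there it resorts to two-variable $p$-adic $L$-functions rather than Heegner cycles. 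Apart from this, your reading of what would be required and of the duality between the paper's theorems and the conjecture is correct.
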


See \cite[Conjecture 5.3]{Vigni} for an analogous conjecture for Hida families. Later in this paper we shall prove results (Corollaries \ref{sha-coro} and \ref{sha-coro2}) in the direction of Conjecture \ref{main-conj}.

\section{Weight $2$ forms, Heegner points and their $p$-adic variation}

In this section, we fix a newform $f$ of weight $2$ and square-free level $N$, then carefully introduce a set $\Xi_f$ that consists of all but finitely many prime numbers. For any $p\in\Xi_f$, we explain why there exists a $p$-adic Coleman family $\f_U$ passing through $f$ and how Heegner points can be interpolated along $\f_U$.

\subsection{The weight $2$ newform $f$}

Here we introduce the newform $f$ that will be one of our basic data. 

\subsubsection{Basic assumption}

Let $f\in S_2^\new(\Gamma_0(N))$ be a normalized newform of weight $2$ and level $N$ with $q$-expansion $f(q)=\sum_{n\geq1}a_n(f)q^n$. Throughout  this paper, we assume that 
\begin{itemize}
\item $N$ is square-free.
\end{itemize}
This condition, which could certainly be relaxed at the cost of adding extra technicalities to our main arguments, is introduced in order to simplify the exposition. In particular, it guarantees (see, \emph{e.g.}, \cite[p. 34]{ribet}) that $f$ has no complex multiplication in the sense of \cite[p. 34, Definition]{ribet}. Equivalently, the abelian variety $A_f$ attached to $f$ has no complex multiplication.

\begin{remark} \label{square-free-rem}
In most of the paper, the assumption that $N$ be square-free is not necessary, the only property of $f$ that is really needed being that $f$ have no complex multiplication. More precisely, we use the square-freeness of $N$ only in \S \ref{sha-subsec} (in particular, in Theorem \ref{sweeting-thm}). Therefore, the results we prove from here to \S \ref{sha-subsec} excluded apply to Coleman families of arbitrary (not necessarily square-free) tame level.
\end{remark}

\subsubsection{$p$-adic Galois image} \label{image-subsubsec}

Let $p$ be a prime number, let $\p$ vary over all the primes of $\Q_f$ above $p$ and recall from \S \ref{representations-subsubsec} the Galois representations 
\[ \rho_{f,p}:G_\Q\longrightarrow\GL_2(\cO_{\Q_f}\otimes_\Z\Z_p),\quad\rho_{f,\p}:G_\Q\longrightarrow\GL_2(\cO_{\Q_f,\p}) \]
attached to $f$. By \cite[Theorem 3.1]{ribet2}, for all but finitely many $p$ the image of $\rho_{f,p}$ contains the set 
\begin{equation} \label{determinant-eq}
\mathscr A_{f,p}\defeq\bigl\{A\in\GL_2(\cO_{\Q_f}\otimes_\Z\Z_p)\mid\det(A)\in\Z_p^\times\bigr\}. 
\end{equation}
Let $p$ be such a prime. Since there are splittings $\cO_{\Q_f}\otimes\Z_p=\oplus_{\p\mid p}\cO_{\Q_f,\p}$ and $\rho_{f,p}=\oplus_{\p\mid p}\rho_{f,\p}$, it follows that 
\begin{equation} \label{BI-eq}
\bigl\{A\in\GL_2(\Z_p)\mid\det(A)\in\Z_p^\times\bigr\}\subset\im(\rho_{f,\p}) \tag{BI}
\end{equation}
for every $\p$ above $p$ (this is a ``big image'' result for $\rho_{f,\p}$). In particular, \eqref{BI-eq} implies that $\SL_2(\Z_p)\subset\im(\rho_{f,\p})$ for every $\p$ above $p$. This fact will be used only later, but we find it convenient to record it here.

\subsubsection{Hecke polynomial at $p$} \label{hecke-polynomial-subsubsec}

From now on, we assume that $p\nmid2N$ (in \S \ref{heegner-subsec} we will refine our choice of $p$). Define $i(f),i_p(f)\in S_2(\Gamma_0(Np))$ by setting $i(f)(z)\defeq f(z)$ and $i_p(f)(z)\defeq f(pz)$ for all $z$ in the complex upper half-plane $\mathcal H$; in other words, $i(f)$ is the image of $f$ under the set-theoretic inclusion $S_2(\Gamma_0(N))\subset S_2(\Gamma_0(Np))$. The $\C$-vector subspace $\V_f$ of $S_2(\Gamma_0(Np))$ spanned by $i(f)$ and $i_p(f)$ is $2$-dimensional and the Hecke operator $U_p$ acts on $\V_f$ with characteristic polynomial
\begin{equation} \label{hecke-pol-eq} 
H_{f,p}(X)\defeq X^2-a_p(f)X+p
\end{equation}
(here we are using the fact that $f$ has weight $2$ and trivial character). We call $H_{f,p}$ the \emph{Hecke polynomial of $f$ at $p$}.

\begin{theorem}[Coleman--Edixhoven] \label{hecke-thm}
The polynomial $H_{f,p}$ has distinct roots.
\end{theorem}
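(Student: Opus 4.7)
The approach is to observe that $H_{f,p}(X)$ has distinct roots if and only if its discriminant $a_p(f)^2 - 4p$ is non-zero, so the goal reduces to proving that $a_p(f)^2 \neq 4p$. First I would recall that by Deligne's proof of the Ramanujan--Petersson conjecture for weight $2$ newforms, each complex conjugate $\iota(a_p(f))$ satisfies $|\iota(a_p(f))| \leq 2\sqrt{p}$; equivalently, the complex roots of $H_{f,p}(X)$ have absolute value exactly $\sqrt{p}$. Hence the only obstruction to distinctness is the extremal equality $a_p(f)^2 = 4p$, in which case both roots would coincide at $\pm\sqrt{p}$ and $\sqrt{p}$ would lie in $\Q_f$.

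The plan is to argue by contradiction: suppose $a_p(f)^2 = 4p$. Translating to the $\mathfrak{p}$-adic Galois representation $V_{f,\mathfrak{p}}$, the arithmetic Frobenius $\Frob_p$ acts with characteristic polynomial $(X \mp \sqrt{p})^2$, so $\Frob_p^2$ acts as multiplication by $p$ on a $2$-dimensional space. Via the crystalline comparison (equivalently, Dieudonn\'e theory for the $p$-divisible group of $A_f$), this forces an extremely rigid structure at $p$: either $\Frob_p$ is the scalar $\pm\sqrt{p}$ (fully supersingular behaviour with a scalar crystalline Frobenius), or it is a non-semisimple Jordan block. Both possibilities are incompatible with the representation coming from a non-CM weight $2$ newform of level prime to $p$.

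The actual implementation of this contradiction is precisely the content of the main theorem of Coleman--Edixhoven, \emph{The semisimplicity of the $U_p$-operator on modular forms} (Math.\ Ann.\ 310, 1998). Their argument combines the analysis of $U_p$ on the rigid-analytic space of overconvergent modular forms near the supersingular locus, the no-CM hypothesis on $f$ (automatic here because $N$ is square-free, \emph{cf.}\ \cite{ribet}), and a careful use of the Tate module of $A_f$ together with Honda--Tate theory to rule out the extremal Frobenius behaviour.

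The main obstacle is of course the Coleman--Edixhoven result itself, whose proof requires substantial geometric input (rigid geometry of modular curves, theory of the $U_p$ operator on overconvergent forms) that is outside the scope of the present paper. For our purposes it is therefore most efficient to invoke it as a black box, which is what I would do in the write-up; the only verification required on our side is that $f$ is non-CM of weight $2$ and that $p \nmid N$, both of which have been arranged by our standing assumptions.
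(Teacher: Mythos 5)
Your proposal is correct and ultimately takes exactly the same route as the paper: the paper's proof is the one-line citation of \cite[Theorem~2.1]{CE}, and you also conclude by invoking that theorem as a black box. The preliminary remarks (discriminant criterion, Ramanujan--Petersson bound $|a_p(f)|\le 2\sqrt p$, reduction to the extremal case $a_p(f)^2=4p$) are accurate and helpful motivation, but note two small inaccuracies in your description of the Coleman--Edixhoven argument itself: their weight-$2$ proof of Theorem~2.1 relies on the Eichler--Shimura relation and the structure of the reduction of $A_f$ modulo~$p$ (abelian varieties with real multiplication, not rigid geometry of overconvergent forms, which enters their higher-weight considerations), and the weight-$2$ statement does not in fact require the no-CM hypothesis (if $p$ is inert in the CM field then $a_p(f)=0$ and the roots are $\pm i\sqrt p$, distinct; if $p$ is split a similar computation applies). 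These points do not affect the correctness of your conclusion.
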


\begin{proof} This is \cite[Theorem 2.1]{CE}. \end{proof}

\begin{remark}
It is well known that if $\lambda$ is a prime of $\Q_f$ not dividing $p$ and $\rho_{f,\lambda}$ is the $\lambda$-adic representation of $G_\Q$ attached to $f$, then $H_{f,p}$ is also the characteristic polynomial of $\rho_{f,\lambda}(\Frob_p)$, where $\Frob_p\in G_\Q$ is a Frobenius element at $p$. 
\end{remark}

\subsubsection{$p$-stabilizations and Coleman families} \label{stabilizations-subsubsec}

Recall the Hecke polynomial $H_{f,p}$ from \eqref{hecke-pol-eq} and let $\alpha,\beta\in\bar\Z$ be its roots (by Theorem \ref{hecke-thm}, we know that $\alpha\not=\beta$, but this fact will play no role in the next few lines). Let us define the \emph{$p$-stabilizations} $f_\alpha$ and $f_\beta$ of $f$ by
\[ f_\alpha(z)\defeq f(z)-\beta f(pz),\quad f_\beta(z)\defeq f(z)-\alpha f(pz) \]
for all $z\in\mathcal H$. It can be checked that $f_\alpha$ and $f_\beta$ are normalized cusp forms of level $\Gamma_0(Np)$ that are eigenforms for the Hecke operators $T_n$ with $(n,Np)=1$; in this case, the eigenvalues of $T_n$ with eigenvectors $f_\alpha$ or $f_\beta$ coincide with those of $f$. Furthermore, $f_\alpha$ and $f_\beta$ are also eigenvalues for the Hecke operator $U_p$; more precisely, they satisfy 
\[ U_p(f_\alpha)=\alpha f_\alpha,\quad U_p(f_\beta)=\beta f_\beta. \]
Since $\ord_p(\alpha)+\ord_p(\beta)=1$, we infer that either $f_\alpha$ or $f_\beta$ has critical slope (\emph{cf.} Remark \ref{critical-rem}) if and only if $0\in\{\ord_p(\alpha),\ord_p(\beta)\}$, and both conditions are equivalent to $\ord_p\bigl(a_p(f)\bigr)=0$, \emph{i.e.}, to $f$ being ordinary at $p$, because $a_p(f)=\alpha+\beta$. The case of Hida families was studied in \cite{Vigni}, so in this paper we work under

\begin{assumption} \label{0-ass}
$0\notin\bigl\{\ord_p(\alpha),\ord_p(\beta)\bigr\}$.
\end{assumption}

Relabelling if necessary, we also assume that $\ord_p(\alpha)\leq\ord_p(\beta)$. From now on we set $f^\sharp\defeq f_\alpha$ and simply refer to $f^\sharp$ as the $p$-stabilization of $f$.

\begin{proposition} \label{coleman-existence-prop}
There exists a Coleman family passing through $f^\sharp$.
\end{proposition}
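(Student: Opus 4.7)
The plan is to verify that $f^\sharp$ is a noble eigenform of weight $2$ and tame level $N$ in the sense of Definition \ref{noble-def}, and then invoke Theorem \ref{interpolation-thm}.

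First I would check that, by construction, $f^\sharp=f_\alpha$ is a $p$-stabilization of the normalized newform $f\in S_2^{\new}(\Gamma_0(N))$, whose character $\chi_f$ is trivial so that $\chi_f(p)=1$. The Hecke polynomial of $f$ at $p$ attached to it in Definition \ref{noble-def} is therefore $X^2-a_p(f)X+p$, which is exactly the polynomial $H_{f,p}$ introduced in \eqref{hecke-pol-eq}. Theorem \ref{hecke-thm} of Coleman--Edixhoven then guarantees that $H_{f,p}$ has distinct roots, so the $p$-regularity condition (1) in Definition \ref{noble-def} is satisfied.

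Next I would verify the non-criticality condition (2), which is only nontrivial when $\ord_p\bigl(a_p(f^\sharp)\bigr)=k-1=1$. Since $a_p(f^\sharp)=\alpha$ and $\alpha\beta=p$, the relation $\ord_p(\alpha)+\ord_p(\beta)=1$ combined with Assumption \ref{0-ass} forces $\ord_p(\alpha),\ord_p(\beta)\in(0,1)$. Together with the normalization $\ord_p(\alpha)\leq\ord_p(\beta)$ chosen just before the statement, this gives
\[
0<\ord_p(\alpha)\leq \tfrac{1}{2}<1,
\]
so $f^\sharp$ is not of critical slope (\emph{cf.} Remark \ref{critical-rem}) and condition (2) is vacuously fulfilled.

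Having checked both conditions, $f^\sharp$ is a noble eigenform of weight $2$ and tame level $N$, so Theorem \ref{interpolation-thm} provides an open disc $U\ni 2$ in $\mathscr W_N$ and a (unique) Coleman family $\f_U$ over $U$ of tame level $N$ such that $f_2=f^\sharp$, which is what is required. I expect no real obstacle here: the only potentially delicate point would have been the non-criticality clause of Definition \ref{noble-def}, but this is precisely what Assumption \ref{0-ass} is designed to avoid, reducing the verification to the classical Coleman--Edixhoven statement on the Hecke polynomial.
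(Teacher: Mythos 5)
Your proof is correct and follows essentially the same approach as the paper: verify $p$-regularity via Theorem \ref{hecke-thm}, verify non-criticality from Assumption \ref{0-ass}, and invoke Theorem \ref{interpolation-thm}. The only superficial difference is that you deduce $\ord_p(\alpha)\leq 1/2$ from the chosen normalization, whereas the paper simply observes that Assumption \ref{0-ass} already forces $\ord_p(\alpha)\neq 1$; both arguments rule out critical slope.
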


\begin{proof} Of course, $(f^\sharp)^\flat=f$, so Theorem \ref{hecke-thm} ensures that condition (1) in Definition \ref{noble-def} is satisfied by $f^\sharp$. On the other hand, $a_p(f^\sharp)=\alpha$ and, by Assumption \ref{0-ass}, $\ord_p(\alpha)\neq1$; hence, property (2) in Definition \ref{noble-def} is trivially enjoyed by $f^\sharp$. We conclude that $f^\sharp$ is a noble eigenform, so the existence of a Coleman family through $f^\sharp$ follows from Theorem \ref{interpolation-thm}. \end{proof}

As before, we write $\f_U$ for the Coleman family through $f$ (or, rather, $f^\sharp$) from Proposition \ref{coleman-existence-prop}. When we want to emphasize the prime $p$, as we will need to do later in this paper, we write $\hf_U$ in place of $\f_U$. Note that, here, $U$ is a suitable open disc in $\mathscr W_N$ such that $2\in U$.

\subsubsection{Assumption on $r_\an(f)$} \label{analytic-assumption-subsubsec}

From here until Section \ref{rank-zero-sec}, the following assumption will be in force.

\begin{assumption} \label{main-ass}
$r_\an(f)=1$.
\end{assumption}

In particular, by \eqref{analytic-congruence-eq}, the root number of $f$ is $\varepsilon(f)=-1$.

\begin{remark}
By \cite[Lemma 2.9]{Vigni}, there is an equality $r_\an(f)=r_\an(f^\sharp)$, so $r_\an(f^\sharp)=1$.
\end{remark}

\subsection{Heegner points and choice of $p$} \label{heegner-subsec}

As in \cite[\S 4.3]{Vigni}, the choice of $p$ is made in terms of Heegner points on $A_f$. 

\subsubsection{Heegner points} \label{heegner-subsubsec}

Recall that, as a consequence of Assumption \ref{main-ass}, $\varepsilon(f)=-1$. By a result of Waldspurger (\cite{Waldspurger}) and Bump--Friedberg--Hoffstein (\cite[p. 543, Theorem, (ii)]{BFH}), there exists an imaginary quadratic field $K$, whose associated Dirichlet character will be denoted by $\chi_K$, such that
\begin{itemize}
\item[(a)] the primes dividing $N$ split in $K$;
\item[(b)] $r_\an(f\otimes\chi_K)=0$.
\end{itemize}
Fix such a field $K$ once and for all. Condition (a) can be equivalently formulated by saying that $K$ satisfies the \emph{Heegner hypothesis} relative to $N$; as in \cite[\S 4.3]{Vigni}, let $\alpha_1\in A_f(K_1)$ be a Heegner point on $A_f$ rational over the Hilbert class field of $K$ and let
\begin{equation} \label{alpha-K-eq}
\alpha_K\defeq\tr_{K_1/K}(\alpha_1)\in A_f(K)
\end{equation}
be the $K$-rational Heegner point that appears in the work of Kolyvagin on the arithmetic of elliptic curves and in its extensions to modular abelian varieties (see, \emph{e.g.}, \cite[\S 2.3]{Kol-Log}). Since there is a factorization 
\[ L(f/K,s)=L(f,s)\cdot L(f\otimes\chi_K,s), \]
Assumption \ref{main-ass} and condition (b) above give $r_\an(f/K)=1$, which amounts, by the Gross--Zagier formula (\cite[Ch. I, Theorem 6.3]{GZ}), to $\alpha_K$ being non-torsion.

\subsubsection{Choice of $p$} \label{choice-subsubsec}

For every prime ideal $\lambda$ of $\cO_f$ denote by $\bar\rho_{f,\lambda}$ the representation of $G_\Q$ over the residue field of $\Q_f$ at $\lambda$ associated with $f$. We will exploit the following two facts:
\begin{itemize}
\item for all but finitely many prime ideals $\p$ of $\cO_f$, the point $\alpha_K$ in \eqref{alpha-K-eq} does not belong to the $\cO_f$-submodule $\p A_f(K)$ (\cite[Proposition 4.8]{Vigni});
\item for all but finitely many prime ideals $\lambda$ of $\cO_f$, the representation $\bar\rho_{f,\lambda}$ is irreducible (\cite[Theorem 2.1, (a)]{ribet2}).
\end{itemize}
Write $\mathscr P$ for the set of all prime numbers. Let $\p_1,\dots,\p_n$ be the prime ideals of $\cO_f$ such that $\alpha_K\in\p_iA_f(K)$ for $i\in\{1,\dots,n\}$, denote by $p_i$ the residue characteristic of $\p_i$ and define
\[ \mathscr P_f\defeq\mathscr P\smallsetminus\{p_1,\dots,p_n\}. \]
On the other hand, let $\{\ell_1,\dots,\ell_t\}$ be the set of the residue characteristics of those primes $\lambda$ for which $\bar\rho_{f,\lambda}$ is reducible and set
\[ \mathscr I_f\defeq\mathscr P\smallsetminus\{\ell_1,\dots,\ell_t\}. \]

\begin{remark} \label{absolutely-rem}
If the residue characteristic of the prime $\lambda$ is not $2$, then $\bar\rho_{f,\lambda}$ is irreducible if and only if it is absolutely irreducible (see, \emph{e.g.}, \cite[(1.5.3), (3)]{NP}).
\end{remark}

Recall from \S \ref{image-subsubsec} that there is a finite subset $\mathscr S_f\subset\mathscr P$ such that for every $p\in\mathscr S_f$ the image of $\rho_{f,p}$ does not contain the set $\mathscr A_{f,p}$ from \eqref{determinant-eq}. Finally, define
\[ \mathscr B_f\defeq\mathscr P\smallsetminus\mathscr S_f \]
and
\begin{equation} \label{xi-f-eq}
\Xi_f\defeq\bigl\{\ell\in\mathscr P\mid\ell\nmid6N\bigr\}\cap\mathscr P_f\cap\mathscr I_f\cap\mathscr B_f. 
\end{equation}
By what we have just said, $\Xi_f$ excludes only finitely many prime numbers. 

\begin{remark}
Fix an odd prime $\ell$ and an integer $M\geq1$ with $\ell\nmid M$. It is a theorem of Jochnowitz (\cite{jochnowitz-TAMS}) that there are only finitely many mod $\ell$ modular
representations of level $\Gamma_1(M)$ (see, \emph{e.g.}, \cite[Proposition 5.1.1]{ColMaz} for the case of level $\Gamma_1(M\ell^r)$ for some $r\in\N$). On the other hand, by a recent result of Calegari--Sardari (\cite[Theorem 1.1]{CS}), there are only finitely many eigenforms $g$ of even weight and level $M$ without complex multiplication such that $a_\ell(g)=0$. 
\end{remark}

Now pick a prime $p\in\Xi_f$, which should be regarded as fixed until further notice, and fix a $p$-adic Coleman family $\hf_U$ passing through $f$, which exists by Proposition \ref{coleman-existence-prop}.

\subsection{Big Heegner classes and their specializations}

As before, for every admissible weight $k\in U\cap\mathscr W_{N,\cla}^0$ we denote by $f_k^\flat$ the newform of level $N$ of which the specialization $f_k$ of $\f_U$ is one of the two $p$-stabilizations. 

\subsubsection{Heegner classes} \label{heegner-classes-subsubsec}

Let $k\in U\cap\mathscr W_{N,\cla}^0$ such that $k>2$ and let $K$ be the imaginary quadratic field that was chosen in \S \ref{heegner-subsubsec}. In \cite{Nek}, Nekov\'a\v{r} introduced a collection of (higher weight) Heegner classes
\begin{equation} \label{heegner-classes-eq}
y_{k,c}\in H^1\Bigl(K_c,T_{f_k^\flat}^\dag\Bigr) 
\end{equation}
that are indexed by the integers $c\geq1$ such that $(c,pND_K)=1$; here $T_{f_k^\flat}^\dag=T_{f_k^\flat,\p}^\dag$ is the $G_\Q$-stable $\cO_{\Q_{f_k^\flat},\p}$-lattice defined in \S \ref{Galois_repr}. By definition, these classes are images under \'etale Abel--Jacobi maps of Heegner cycles on Kuga--Sato varieties that are built out of Heegner points on the modular curve $X_0(N)$. In particular, it follows from a result of Nizio\l, Nekov\'a\v{r} and Saito (see, \emph{e.g.}, \cite[Theorem 2.4]{LV}) that
\[ y_{k,c}\in H^1_f\Bigl(K_c,T_{f_k^\flat}^\dag\Bigr), \]
where $H^1_f(K_c,\star)$ stands for the relevant Bloch--Kato Selmer group (see, \emph{e.g.}, \cite[\S 2.4]{LV}).

\begin{remark} \label{AJ-Kummer-rem}
For $k=2$, the class $y_{2,1}\in H^1_f\bigl(K_1,T_f^\dag\bigr)\simeq H^1_f\bigl(K_1,\Ta_\p(A_f)\bigr)$ turns out to be the Kummer image of the $f$-isotypical projection of a Heegner point on the Jacobian $J_0(N)$ of $X_0(N)$. In fact, as observed in \cite[Examples 2.2]{Nek3},  $\CH^1(X_0(N)/K_1{)}_0$ coincides with $\Pic^0(X_0(N))(K_1)$, which is the Mordell--Weil group $J_0(N)(K_1)$, and the cycle class map reduces to the Kummer map
\[ J_0(N)(K_1)\otimes_\Z\cO_{\Q_f,\p}\longrightarrow H^1\Bigl(K_1,H^1_{\text{\'et}}\bigl(X_0(N)_{\bar{\Q}},\cO_{\Q_f,\p}(1)\bigr)\!\Bigr). \]
Then it can be checked that taking the $f$-isotypical quotient produces the weight $2$ version of the \'etale Abel--Jacobi map $\Phi_{f,K_1,\p}$ from \S \ref{AJ-subsubsec}.
\end{remark}

\subsubsection{The class $\zeta_{\hf,c}$} \label{big-heegner-subsubsec}

Recently, Jetchev--Loeffler--Zerbes (\cite{JLZ}) and B\"uy\"ukboduk--Lei (\cite{BL}) have proved independently that the classes in \eqref{heegner-classes-eq} can be interpolated along $\f_U$; in this paper, we find it more convenient to use the results of \cite{BL}. For every integer $c\geq1$ coprime to $Np$, B\"uy\"ukboduk and Lei construct a big Heegner-type class
\begin{equation} \label{zeta-integrality-eq}
\zeta_{\f_U,c}\in H^1\Bigl(K_c,\T^\dagger_{\f_U}\Bigr), 
\end{equation}
where, as in \S \ref{big-galois-subsec}, $\T^\dagger_{\f_U}$ is the (twisted) integral big Galois representation attached to $\f_U$. The definition of $\zeta_{\f_U,c}$, which comes from a universal class interpolating generalized Heegner cycles \emph{\`a la} Bertolini--Darmon--Prasanna (\cite{BDP}), is essentially immaterial for our goals: see \cite[\S 4.1]{BL} for details (\emph{cf.} \S \ref{integrality-subsubsec} too).

\subsubsection{On the integrality of $\zeta_{\f_U,c}$} \label{integrality-subsubsec}

The integrality of the big Heegner class $\zeta_{\f_U,c}$ is key for our purposes and we want to highlight that it holds unconditionally; the ingredients needed to check this property are contained in \cite[Appendix B]{BL-bis}. 

With notation and terminology analogous to those of \cite{BL}, which we will use freely in the following lines, $\zeta_{\f_U,c}$ is the evaluation at the trivial character of the universal Heegner class $\zeta_{\f_U,c}^{\mathrm{ac}}$ interpolating the generalized Heegner classes of Bertolini--Darmon--Prasanna along the Coleman family $\f_U$. Set $K_{p^\infty}\defeq\cup_{n\geq1}K_{p^n}$ and $\tilde{\Gamma}_{\mathrm{ac}}\defeq\Gal(K_{p^\infty}/K)\simeq\Z_p\times\Delta$ with $\Delta$ finite, fix a topological generator $\gamma_{\mathrm{ac}}$ of the free part of $\tilde{\Gamma}_{\mathrm{ac}}$ and write $\Lambda(\tilde{\Gamma}_{\mathrm{ac}})$ for the Iwasawa algebra of $\tilde{\Gamma}_{\mathrm{ac}}$ over $\Z_p$. Finally, set $\Lambda(\tilde{\Gamma}_{\mathrm{ac}})_{\Q_p}\defeq\Lambda(\tilde{\Gamma}_{\mathrm{ac}})\otimes_{\Z_p} \Q_p$. It turns out that  
\[ \zeta_{\f_U,c}^{\mathrm{ac}}\in H^1\Bigl(K_c,\T_{\f_U}^{\dag,\mathrm{ac}}[1/p]\Bigr)\hat{\otimes}_{{\Lambda(\tilde{\Gamma}_{\mathrm{ac}})}_{\Q_p}}\mathcal{H}(\tilde{\Gamma}_{\mathrm{ac}}), \]
where $\T_{\f_U}^{\dag,\mathrm{ac}}$ is the anticyclotomic twist of $\T_{\f_U}^\dag$ with respect to the canonical involution on $\tilde{\Gamma}_{\mathrm{ac}}$ (\emph{cf.} \cite[\S 1.1.2]{BL}) and $\mathcal{H}(\tilde{\Gamma}_{\mathrm{ac}})$ is an appropriate ${\Lambda(\tilde{\Gamma}_{\mathrm{ac}})}_{\Q_p}$-submodule of the $\Q_p$-algebra of power series $\Q_p[[\gamma_{\mathrm{ac}}-1]]$. 

If the image of a certain big Dieudonn\'e module under Perrin-Riou's big exponential map is an integral class (this is condition {\bf (IntPR)} in \cite[Appendix]{BL}, which always holds true in the slope $0$ setting, \emph{i.e.}, for Hida families), then
\begin{equation} \label{int-eq}
\zeta_{\f_U,c}^{\mathrm{ac}}\in H^1\Bigl(K_c,\T_{\f_U}^{\dagger,\mathrm{ac}}\Bigr)\hat\otimes_{\Lambda(\tilde{\Gamma}_{\mathrm{ac}})}\,p^C\mathcal{H}^+(\tilde{\Gamma}_{\mathrm{ac}}),
\end{equation}
where $C$ is a constant that depends only on the slope of $\f_U$ and $\mathcal{H}^+(\tilde{\Gamma}_{\mathrm{ac}})\subset \mathcal{H}(\tilde{\Gamma}_{\mathrm{ac}})$ is the $\Lambda(\tilde{\Gamma}_{\mathrm{ac}})$-submodule of power series that are integral with respect to a suitable valuation (\emph{cf.} \cite[\S 1.1.1]{BL}). Now let $\Hom_{\mathrm{cts}}(\tilde{\Gamma}_{\mathrm{ac}},\GG_m)$ be the space in which the anticyclotomic characters of the generalized Heegner classes can vary. In particular, one deduces from \eqref{int-eq} that 
\begin{equation} \label{zeta-integrality-eq2}
\mathds{1}\bigl(\zeta_{\f_U,c}^{\mathrm{ac}}\bigr)\in H^1\Bigl(K_c,\T^\dagger_{\f_U}\Bigr), 
\end{equation}
where $\mathds{1}\in\Hom_{\mathrm{cts}}\bigl(\tilde\Gamma_{\mathrm{ac}},\GG_m\bigr)$ denotes the trivial character. As explained in \cite[Appendix B]{BL-bis}, the integrality \eqref{int-eq} of $\zeta_{\f_U,c}^{\mathrm{ac}}$ is unconditional as long as one replaces $\Hom_{\mathrm{cts}}(\tilde{\Gamma}_{\mathrm{ac}},\GG_m)$ with any wide open disc around $\mathds{1}$ of radius smaller than $1$ (\emph{cf.}, in particular, \cite[Theorem B.5]{BL-bis}). On the other hand, by definition, $\zeta_{\f_U,c}\defeq\mathds{1}\bigl(\zeta_{\f_U,c}^{\mathrm{ac}}\bigr)$, so the integrality of $\zeta_{\f_U,c}$ as in \eqref{zeta-integrality-eq} follows from \eqref{zeta-integrality-eq2}.

\subsubsection{The specialization theorem} \label{specialization-subsubsec}

As explained in \S \ref{big-galois-subsec}, for every admissible $k\in U\cap\mathscr{W}_{N,\text{cl}}$ there is a Galois-equivariant specialization map 
\[ \spe_k:\T_{\f_U}^\dagger\longrightarrow T_{f_k}^{*,\dag}. \] 
On the other hand, as observed in \cite[\S 3.1]{BL}, there is a natural isomorphism 
\begin{equation} \label{pr-eq}
V_{f_k}^{*,\dag}\overset\simeq\longrightarrow V_{f_k^\flat}^{*,\dag}.
\end{equation}
By the argument employed at the end of \S \ref{residual-subsubsec-1}, up to rescaling by a suitable element of $\Q_{f_k^\flat,\fP}^\times$ we can assume that \eqref{pr-eq} induces a $G_\Q$-equivariant isomorphism
\begin{equation} \label{pr-eq2}
T_{f_k}^{*,\dag}\overset\simeq\longrightarrow T_{f_k^\flat}^{*,\dag}
\end{equation}
between lattices. Finally, by \eqref{Poinc} there is an isomorphism $T_{f_k^\flat}^{*,\dag}\simeq T_{f_k^\flat}^\dag$. By functoriality, for every number field $L$ there is a specialization map in cohomology  
\begin{equation} \label{specialization-eq}
\spe_{k,L}: H^1\Bigl(L,\T_{\f_U}^\dag\Bigr)\longrightarrow H^1\Bigl(L,T_{f_k^\flat}^\dag\Bigr). 
\end{equation}
We remark that the notation used for the map in \eqref{specialization-eq} is different from that of \cite{BL} and closer to the one adopted in \cite{Vigni}.

Let $y_{k,1}$ be the Heegner class from \eqref{heegner-classes-eq}; let us introduce the $K$-rational Heegner class
\begin{equation} \label{heegner-K-eq}
y_{k,K}\defeq\cores_{K_1/K}(y_{k,1})\in H^1\Bigl(K,T_{f_k^\flat}^\dag\Bigr). 
\end{equation}
Finally, consider also the $K$-rational big Heegner class
\[ \zeta_{\f_U,K}\defeq\cores_{K_1/K}\bigl(\zeta_{\f_U,1}\bigr)\in H^1\Bigl(K,\T_{\f_U}^\dag\Bigr). \]
Now we can state the specialization theorem we are interested in, which is a consequence of the main result of \cite{BL}.

\begin{theorem}[B\"uy\"ukboduk--Lei] \label{BL-thm}
For every admissible $k\in U\cap\mathscr W_{N,\cla}^0$, there is $\ell_k\in\bar\Z_p^\times$ such that
\[ \spe_{k,K}\bigl(\zeta_{\f_U,K}\bigr)=\ell_k\cdot y_{k,K}. \]
\end{theorem}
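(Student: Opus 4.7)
The plan is to deduce the theorem from the main interpolation result of B\"uy\"ukboduk--Lei applied at level $K_1$, and then to push the relation down to $K$ via corestriction, checking that the resulting constant is a $p$-adic unit.

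First, I would invoke the specialization theorem of \cite{BL} at level $K_1$: for every admissible $k \in U \cap \mathscr{W}_{N,\cla}^0$, there exists a $p$-adic period $\ell_k \in \bar\Q_p^\times$ such that
\[
\spe_{k,K_1}(\zeta_{\f_U,1}) \;=\; \ell_k \cdot y_{k,1} \qquad \text{in } H^1\bigl(K_1, T_{f_k^\flat}^\dag\bigr).
\]
Unpacking this, one uses the fact that $\zeta_{\f_U,1}^{\mathrm{ac}}$ is constructed in \cite[\S 4.1]{BL} as a universal class interpolating, both over continuous characters of $\tilde\Gamma_{\mathrm{ac}}$ and over the Coleman family $\f_U$, the generalized Heegner cycles of Bertolini--Darmon--Prasanna. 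Evaluating this universal class at the trivial character $\mathds{1}$ recovers $\zeta_{\f_U,1}$, and specializing the Coleman variable at $k$ matches (up to the explicit $p$-adic period $\ell_k$) the generalized Heegner cycle attached to $f_k^\flat$ in weight $k$ with Nekov\'a\v{r}'s classical Heegner cycle, which after applying the Abel--Jacobi map is precisely $y_{k,1}$.

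Second, I would apply $\cores_{K_1/K}$ to both sides. By functoriality of corestriction with respect to the $G_\Q$-equivariant specialization $\T_{\f_U}^\dag \to T_{f_k^\flat}^\dag$ (the composition of $\mathrm{sp}_k$ from \eqref{sp}, the isomorphism \eqref{pr-eq2} and the Poincar\'e duality identification \eqref{Poinc}), we have the compatibility
\[
\cores_{K_1/K} \circ \spe_{k,K_1} \;=\; \spe_{k,K} \circ \cores_{K_1/K}.
\]
Since $\zeta_{\f_U,K} = \cores_{K_1/K}(\zeta_{\f_U,1})$ and $y_{k,K} = \cores_{K_1/K}(y_{k,1})$ by the definitions given in \S \ref{specialization-subsubsec}, the equality $\spe_{k,K}(\zeta_{\f_U,K}) = \ell_k \cdot y_{k,K}$ follows at once.

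The main obstacle is to verify that $\ell_k \in \bar\Z_p^\times$, rather than just in $\bar\Q_p^\times$. The non-vanishing of $\ell_k$ is built into the interpolation formula of \cite{BL} (it is the ratio of two non-degenerate period comparisons); its integrality is the delicate point. Here the unconditional integrality analysis of \S \ref{integrality-subsubsec} is essential: because $\zeta_{\f_U,1}^{\mathrm{ac}}$ lies in $H^1(K_1, \T_{\f_U}^{\dag,\mathrm{ac}}) \hat\otimes \,p^C \mathcal{H}^+(\tilde\Gamma_{\mathrm{ac}})$ unconditionally in our setting (via the wide-open-disc argument of \cite[Appendix B]{BL-bis} combined with condition \textbf{(IntPR)} which holds on a sufficiently small disc around $\mathds{1}$), its value $\zeta_{\f_U,1} = \mathds{1}(\zeta_{\f_U,1}^{\mathrm{ac}})$ is integral, and so is $y_{k,1}$ by construction via Abel--Jacobi from integral Heegner cycles. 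The equality above then forces $\ell_k$ to be a $p$-adic integer; that it is actually a unit is a consequence of the explicit shape of the comparison period at weight $k \geq 2$ together with our standing hypothesis $p \in \Xi_f$, which by design excludes the finitely many primes at which the period would degenerate modulo $\fP$.
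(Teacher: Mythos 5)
Your first two steps are essentially the paper's argument: the paper also cites \cite[Proposition 4.15, (ii)]{BL} for the underlying interpolation, and corestriction from $K_1$ to $K$ is implicit in the very definitions $\zeta_{\f_U,K}=\cores_{K_1/K}(\zeta_{\f_U,1})$ and $y_{k,K}=\cores_{K_1/K}(y_{k,1})$. Spelling out the commutation of $\cores_{K_1/K}$ with $\spe_k$ is fine and routine.

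The gap is in your verification that $\ell_k$ is a $p$-adic \emph{unit}, which is precisely the point the paper takes care to argue. Two issues. First, the integrality step does not go through as stated: from $\spe_{k,K_1}(\zeta_{\f_U,1}) = \ell_k\cdot y_{k,1}$ with both cohomology classes integral, one cannot conclude $\ell_k\in\bar\Z_p$, since $y_{k,1}$ need not be a primitive vector in $H^1(K_1,T_{f_k^\flat}^\dagger)$ (if $y_{k,1}$ were $p$-divisible, $\ell_k$ could have a pole and the product would still be integral). In any case, $\ell_k$ is not an undetermined scalar to be pinned down by such constraints; it is given by an explicit closed formula in \cite[Proposition 4.15, (ii)]{BL}. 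Second, your appeal to $p\in\Xi_f$ as ``excluding the primes at which the period would degenerate modulo $\fP$'' is incorrect: inspecting \eqref{xi-f-eq}, the set $\Xi_f$ is carved out by the non-triviality of $\alpha_K$ modulo $\p$, irreducibility of $\bar\rho_{f,\lambda}$, the big-image condition, and $\ell\nmid 6N$ --- none of which has anything to do with the period $\ell_k$. What the paper actually uses is the explicit expression for $\ell_k$ from \cite{BL}: the only potentially non-unit factor is the Atkin--Lehner pseudo-eigenvalue $\lambda_N(f_k^\flat)$, and since $f_k^\flat\in S_k^\new(\Gamma_0(N))$ has trivial character, $W_N$ is an involution on $S_k^\new(\Gamma_0(N))$, so $\lambda_N(f_k^\flat)\in\{\pm1\}$ and is automatically a $p$-adic unit. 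This Atkin--Lehner observation is the key ingredient that is missing from your argument.
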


\begin{proof} This is \cite[Proposition 4.15, (ii)]{BL}. To see that the constant $\ell_k$ is indeed a $p$-adic unit, recall that $f_k^\flat\in S_k^\new(\Gamma_0(N))$, \emph{i.e.}, $f_k^\flat$ is a newform of level $N$ and trivial character. Thus, the pseudo-eigenvalue $\lambda_N\bigl(f_k^\flat\bigr)$ of the Atkin--Lehner operator $W_N$ is actually an eigenvalue (see, \emph{e.g.}, \cite[p. 224]{AL}), and then $\lambda_N\bigl(f_k^\flat\bigr)\in\{\pm1\}$ because $W_N$ is an involution on $S_k^\new(\Gamma_0(N))$. Now the fact that $\ell_k$ is a $p$-adic unit follows from the explicit expression for $\ell_k$ given in \cite[Proposition 4.15, (ii)]{BL}. \end{proof}

\begin{remark}
The reader is referred to work of Castella (\cite{CasHeeg}) and of Ota (\cite{ota-JNT}) for results analogous to Theorem \ref{BL-thm} in the setting of Hida families.
\end{remark}

\section{Shafarevich--Tate groups in Coleman families: the rank one case} \label{sha-sec}

In this section, we prove our results on Shafarevich--Tate groups and on images of $p$-adic \'etale Abel--Jacobi maps attached to a large class of even weight eigenforms in the Coleman family $\hf_U$ introduced at the end of \S \ref{choice-subsubsec}, and so in a rank $1$ setting.

\subsection{Torsion-freeness in cohomology} \label{image-subsec2}

Here we want to prove a torsion-freeness result for the cohomology of the self-dual Galois representations attached to the specializations of the Coleman family $\f_U=\hf_U$ that was fixed at the end of \S \ref{choice-subsubsec}. The result we obtain is the counterpart in the finite slope setting of \cite[Corollary 4.24]{Vigni}. It is worth remarking that the strategy in \cite{Vigni} towards torsion-freeness in cohomology was crucially based on results of Fischman on the image of big Galois representations associated with Hida families (\cite{fischman}). Unfortunately, no analogue for Coleman families of Fischman's results seems to be currently available (but see, \emph{e.g.}, \cite[\S 10.5]{CLM} for results in this direction), so in this paper we adopt an alternative, more direct approach that takes advantage of our ``big image'' assumption on $f$ (\emph{cf.} \S \ref{image-subsubsec} and \eqref{xi-f-eq}).

\subsubsection{Representations of non-solvable type} \label{non-solvable-subsubsec}

As in \cite[\S 4.5]{Vigni}, a group representation over a commutative ring is said to be \emph{of non-solvable type} if its image is a non-solvable group. As observed in \cite[Remark 1.1, (i)]{BL}, the fact that, thanks to our choice of $p$, the image of $\rho_{f,p}$ contains the set $\mathscr A_{f,p}$ from \eqref{determinant-eq} ensures that $\bar\rho_f^\dagger$ is full, \emph{i.e.}, the image of $\bar\rho^\dagger_f$ contains a conjugate of $\SL_2(\F_p)$; in particular, $\bar\rho^\dagger_f$ is of non-solvable type (\emph{cf.} \cite[Lemma 4.15]{Vigni}). Furthermore, as was pointed out in \S \ref{residual-subsubsec}, $\bar\rho^\dagger_f$ is equivalent to $\bar\rho^\dagger_{f_k}$ for every admissible weight $k\in U\cap\mathscr W_{N,\cla}$ with $k\equiv2\pmod{2(p-1)}$, so $\bar\rho^\dagger_{f_k}$ is of non-solvable type for all such $k$.

\subsubsection{Torsion-freeness of $H^1$ over solvable extensions}

We begin with two auxiliary results.

\begin{lemma} \label{lemma-1}
Let $\rho$ be an irreducible representation of $G_\Q$ of non-solvable type. If $F\subset\bar\Q$ is a solvable field extension of $\Q$, then $H^0(F,\rho)$ is trivial.
\end{lemma}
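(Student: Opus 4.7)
The plan is to reduce immediately to the case where $F/\Q$ is a finite Galois extension with solvable Galois group, and then exploit the normality of $G_F$ in $G_\Q$ together with the irreducibility of $\rho$. By definition, $F/\Q$ being solvable means its Galois closure $L/\Q$ (inside $\bar\Q$) is a Galois extension with solvable Galois group. Since $G_L \subset G_F$, there is an inclusion $H^0(F,\rho) \hookrightarrow H^0(L,\rho)$, so it suffices to prove the statement for $L$ in place of $F$. Thus from now on I would assume $F/\Q$ is Galois with $\Gal(F/\Q)$ solvable.

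Next, I would consider the submodule $W \defeq \rho^{G_F} = H^0(F,\rho) \subset \rho$ and observe that it is stable under the full $G_\Q$-action. Indeed, $G_F$ is normal in $G_\Q$ (by the Galois assumption), so for any $\sigma \in G_\Q$, $g \in G_F$ and $w \in W$ we have $g\sigma(w) = \sigma(\sigma^{-1}g\sigma)(w) = \sigma(w)$, using that $\sigma^{-1}g\sigma \in G_F$ fixes $w$. Hence $\sigma(W) \subset W$, and $W$ is a $G_\Q$-subrepresentation of $\rho$.

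Now I would invoke irreducibility of $\rho$ as a $G_\Q$-representation: either $W = 0$ (which is exactly the desired conclusion) or $W = \rho$. In the second case $G_F$ acts trivially on $\rho$, so the representation $\rho$ factors through the quotient $G_\Q/G_F \simeq \Gal(F/\Q)$. The image of $\rho$ is then a quotient of a solvable group, hence solvable, contradicting the hypothesis that $\rho$ is of non-solvable type. Consequently $W = 0$, as claimed.

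I do not expect any genuine obstacle here: the only mildly delicate point is the reduction to the Galois case, which rests on the standard convention that a (not necessarily Galois) solvable extension is one whose Galois closure has solvable Galois group. Everything else is formal and uses only the definition of non-solvable type together with the normality argument above.
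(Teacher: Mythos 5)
Your proof is correct and essentially the canonical argument. The paper itself does not spell out a proof but instead cites \cite[Lemma 3.10]{LV}; the argument you give—passing to the Galois closure, observing that $H^0(F,\rho)$ is a $G_\Q$-subrepresentation because $G_F\trianglelefteq G_\Q$, and then using irreducibility together with the fact that a quotient of a solvable group is solvable to rule out $H^0(F,\rho)=\rho$—is the standard one that the cited lemma relies on, so there is no substantive divergence from the paper's intended route.
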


\begin{proof} This is essentially \cite[Lemma 3.10]{LV}. \end{proof}

\begin{lemma} \label{lemma-2}
Let $G$ be a profinite group, let $\cO$ be the valuation ring of a finite extension of $\Q_p$, let $\varpi$ be a uniformizer of $\cO$ and let $T$ be a free $\cO$-module of finite rank equipped with a continuous $\cO$-linear action of $G$. If $H^0(G,T/\varpi T)$ is trivial, then the $\cO$-module $H^1(G,T)$ is torsion-free.
\end{lemma}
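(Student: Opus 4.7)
The plan is to exploit the Bockstein-type long exact sequence attached to the short exact sequence of continuous $G$-modules
\[
0 \longrightarrow T \xrightarrow{\;\varpi\;} T \longrightarrow T/\varpi T \longrightarrow 0,
\]
in which the first map is multiplication by $\varpi$. Since $T$ is free of finite rank over $\cO$, the module $T$ is profinite for the $\varpi$-adic topology and $T/\varpi T$ is finite discrete, so this really is a short exact sequence in the category of continuous $G$-modules in Tate's sense, and the standard long exact sequence in continuous cohomology applies.

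The relevant segment of that long exact sequence is
\[
H^0(G, T/\varpi T) \longrightarrow H^1(G, T) \xrightarrow{\;\varpi\;} H^1(G, T),
\]
so the kernel of multiplication by $\varpi$ on $H^1(G,T)$ is identified with a quotient of $H^0(G, T/\varpi T)$. The hypothesis that $H^0(G, T/\varpi T)$ is trivial therefore forces $H^1(G,T)[\varpi] = 0$, i.e.\ multiplication by $\varpi$ on $H^1(G, T)$ is injective.

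To conclude I would invoke the fact that $\cO$ is a discrete valuation ring with uniformizer $\varpi$: any nonzero torsion element $x$ of an $\cO$-module is annihilated by some power $\varpi^n$, and if $n \geq 1$ is chosen minimally then $\varpi^{n-1} x$ is a nonzero $\varpi$-torsion element. Vanishing of $H^1(G, T)[\varpi]$ therefore implies vanishing of the whole $\cO$-torsion submodule of $H^1(G, T)$, which is exactly the desired torsion-freeness.

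There is no serious obstacle here: once the continuous long exact sequence is written down, everything reduces to the tautology that over a DVR being torsion-free is equivalent to being $\varpi$-torsion-free. The only point worth a line of justification is the existence of the long exact sequence in \emph{continuous} cohomology, which holds because $T$ is a finitely generated free $\cO$-module (hence compact in its $\varpi$-adic topology) and the quotient by $\varpi T$ is a short exact sequence of continuous $G$-modules in the sense of \cite[\S 2]{Tate}.
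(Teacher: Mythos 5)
Your proof is correct and is precisely the argument the paper has in mind: the paper's proof simply cites the long exact sequence attached to $0 \to T \xrightarrow{\varpi} T \to T/\varpi T \to 0$, and you have filled in the same two ingredients (vanishing of the $\varpi$-torsion via $H^0(G,T/\varpi T)=0$, and the DVR fact that $\varpi$-torsion-freeness implies torsion-freeness). No discrepancy to report.
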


\begin{proof}  This follows easily by considering the long exact sequence in cohomology associated with the short exact sequence of Galois modules
\[ 0\longrightarrow T\overset{\varpi\cdot}\longrightarrow T\longrightarrow T/\varpi T\longrightarrow0, \]
where the first non-trivial arrow is the multiplication-by-$\varpi$ map. \end{proof}

Now we are ready to prove the cohomological result that will be used in our study of the (algebraic and analytic) ranks of the specializations of $\f_U$. In the next statement, $F$ is a number field.

\begin{proposition} \label{torsion-free-prop}
If $F/\Q$ is solvable and $k\in U\cap\mathscr W_{N,\cla}$ is an admissible weight such that $k\equiv2\pmod{2(p-1)}$, then the $\cO_{\Q_{f_k},\fP}$-module $H^1\bigl(F,T^\dagger_{f_k}\bigr)$ is torsion-free.
\end{proposition}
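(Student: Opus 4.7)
The plan is to reduce the torsion-freeness of $H^1\bigl(F,T^\dagger_{f_k}\bigr)$ over $\cO_{f_k,\fP}$ to the vanishing of the residual $G_F$-invariants, and then to exploit the ``big image'' hypothesis that was built into our choice of $p\in\Xi_f$. More precisely, let $\varpi$ be a uniformizer of $\cO_{f_k,\fP}$; applying Lemma \ref{lemma-2} to $G=G_F$, $\cO=\cO_{f_k,\fP}$ and $T=T^\dagger_{f_k}$, it is enough to verify that
\[ H^0\bigl(F,\bar T^\dagger_{f_k}\bigr)=0. \]

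The next step is to replace $\bar T^\dagger_{f_k}$ with the more familiar $\bar T^\dagger_f$. Since $k\equiv 2\pmod{2(p-1)}$, diagram \eqref{diagram} in \S \ref{residual-subsubsec} supplies a $G_\Q$-equivariant isomorphism $\bar T^\dagger_{f_k}\simeq\bar T^\dagger_f$, so it suffices to prove $H^0\bigl(F,\bar T^\dagger_f\bigr)=0$. For this I would invoke Lemma \ref{lemma-1}, whose hypotheses are that $\bar\rho^\dagger_f$ is irreducible and of non-solvable type. Both conditions have already been recorded in \S \ref{non-solvable-subsubsec}: the definition of $\Xi_f$ in \eqref{xi-f-eq} forces $p\in\mathscr B_f$, hence the image of $\rho_{f,p}$ contains $\mathscr A_{f,p}$, which in turn guarantees that the image of $\bar\rho^\dagger_f$ contains a conjugate of $\SL_2(\F_p)$. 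Since $p\geq5$ (indeed $p\nmid 6N$), the group $\SL_2(\F_p)$ is non-solvable and acts irreducibly on $\F_p^2$, so both irreducibility and non-solvability of $\bar\rho^\dagger_f$ are immediate. Because $F/\Q$ is solvable by hypothesis, Lemma \ref{lemma-1} then yields $H^0\bigl(F,\bar T^\dagger_f\bigr)=0$, and combining this with Lemma \ref{lemma-2} gives the conclusion.

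The potentially delicate point is propagating a ``big image'' statement for $f$, which is fixed, to \emph{all} admissible specializations $f_k$ simultaneously; in the Hida setting of \cite{Vigni} this was handled via results of Fischman on the image of big Galois representations, whereas here no such results are available for Coleman families. The congruence condition $k\equiv 2\pmod{2(p-1)}$ circumvents this obstacle entirely: it collapses every $\bar\rho^\dagger_{f_k}$ onto the single residual representation $\bar\rho^\dagger_f$ of $f$ via diagram \eqref{diagram}, so that the single ``big image'' input at $p\in\Xi_f$ suffices to control the residual invariants uniformly in $k$. Apart from this reduction, the argument is purely formal and requires no further input.
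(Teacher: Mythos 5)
Your proof is correct and follows the same overall structure as the paper's: reduce via Lemma~\ref{lemma-2} to the vanishing of $H^0\bigl(F,\bar T^\dagger_{f_k}\bigr)$, transport to $\bar T^\dagger_f$ via the isomorphism supplied by the congruence $k\equiv 2\pmod{2(p-1)}$, and invoke Lemma~\ref{lemma-1}. The one point where your argument genuinely diverges is the source of irreducibility of $\bar\rho^\dagger_f$: the paper deduces it from the condition $p\in\mathscr I_f$ (which ensures $\bar\rho_f$ is irreducible) together with the observation that irreducibility is preserved under twisting by a one-dimensional character, whereas you read irreducibility directly off the ``big image'' condition $p\in\mathscr B_f$, using that a representation whose image contains a conjugate of $\SL_2(\F_p)$ is (absolutely) irreducible. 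Both routes are valid; yours is arguably tidier in that it extracts both non-solvability and irreducibility from a single hypothesis ($\mathscr B_f$), so that $\mathscr I_f$ plays no role in this particular proof, though it remains part of the definition of $\Xi_f$ and is needed elsewhere in the paper.
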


\begin{proof} By our choice of $p$, the representation $\bar\rho_f$ is irreducible; since the irreducibility of a given representation is preserved by tensorization with $1$-dimensional
representations (see, \emph{e.g.}, \cite[Exercise 2.2.14, (2)]{kowalski}), the same is true of $\bar\rho_f^\dagger$. On the other hand, as we remarked before, $\bar\rho_f^\dagger$ is of non-solvable type, so $H^0\bigl(F,\bar T_f^\dagger\bigr)$ is trivial by Lemma \ref{lemma-1}. Finally, as explained in \S \ref{residual-subsubsec}, if $k\in U\cap\mathscr W_{N,\cla}$ is admissible and $k\equiv2\pmod{2(p-1)}$, then $\bar T_f^\dagger\simeq\bar T_{f_k}^\dagger$ as $G_\Q$-representations, from which we deduce that $H^0\bigl(F,\bar T_{f_k}^\dagger\bigr)$ is trivial. Now the torsion-freeness of the $\cO_{f_k,\fP}$-module $H^1\bigl(F,T^\dagger_{f_k}\bigr)$ follows from Lemma \ref{lemma-2}. \end{proof}

\subsection{Non-triviality of Heegner cycles in families}

Pick $p\in\Xi_f$, where $\Xi_f$ is the set of prime numbers introduced in \eqref{xi-f-eq}, and let $\p$ a prime of $\Q_f$ above $p$. As before, let $\fP$ be our fixed prime ideal of $\bar\Z$ above $p$.

\subsubsection{Kummer maps}

Write $\Ta_\p(A_f)$ for the $\p$-adic Tate module of $A_f$ and $A_f[\p]$ for the $\p$-torsion $\cO_f$-submodule of $A_f(\bar\Q)$. For any number field $F$, the surjection $\Ta_\p(A_f)\twoheadrightarrow A_f[\p]$ induces a map in Galois cohomology $\pi_{f,F}:H^1\bigl(F,\Ta_\p(A_f)\bigr)\rightarrow H^1(F,A_f[\p])$. There are also Kummer maps $\delta_{f,F}:A_f(F)\rightarrow H^1\bigl(F,\Ta_\p(A_f)\bigr)$ and $\pi_{f,F}\circ\delta_{f,F}: A_f(F)\rightarrow H^1(F,A_f[\p])$ (see, \emph{e.g.}, \cite[Appendix A.1]{GP}), the second of which gives an injection
\[ \bar\delta_{f,F}:A_f(F)\big/\p A_f(F)\longmono H^1(F,A_f[\p]). \] 
Let $K$ be the imaginary quadratic field from \S \ref{heegner-subsubsec}. Thanks to the choice of $p$ we made in \S \ref{choice-subsubsec}, the image $\bar\alpha_K$ of $\alpha_K$ in $A_f(K)/\p A_f(K)$ is non-zero, hence 
\begin{equation} \label{nonzero-eq}
\bar\delta_{f,K}(\bar\alpha_K)\not=0. 
\end{equation}

\subsubsection{Non-triviality of Heegner cycles} \label{nontrivial-subsubsec}

With notation as in \eqref{classical-weights-n-eq}, let us fix an admissible $k\in U\cap\mathscr W_{N,\cla,4}$ with $k\equiv 2 \pmod{2(p-1)}$ and recall the Heegner cycle $y_{k,K}$ from \eqref{heegner-K-eq}. The next result is the counterpart in our finite slope setting of \cite[Theorem 5.17]{Vigni}.

\begin{theorem} \label{heegner-nontrivial-prop}
The cycle $y_{k,K}$ is not torsion over $\cO_{\Q_{f_k^\flat},\fP}$.
\end{theorem}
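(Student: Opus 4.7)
The plan is to argue by contradiction using the big Heegner class $\zeta_{\f_U,K}$ as a bridge between weight $2$ (where non-triviality is known via the Heegner point $\alpha_K$) and weight $k$. Specifically, I will assume that $y_{k,K}$ is torsion over $\cO_{f_k^\flat,\fP}$ and derive that its reduction to residual cohomology vanishes; the commutative diagram \eqref{diagram} will then transport this vanishing to the weight $2$ specialization, contradicting the fact that $\bar\alpha_K\neq0$ in $A_f(K)/\fP A_f(K)$ (a consequence of our choice $p\in\Xi_f$).

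In more detail, suppose $y_{k,K}$ is torsion. By Theorem \ref{BL-thm}, the specialization $\spe_{k,K}\bigl(\zeta_{\f_U,K}\bigr)=\ell_k\cdot y_{k,K}$ is then torsion as well, because $\ell_k\in\bar\Z_p^\times$. Combining the isomorphisms \eqref{pr-eq2} and \eqref{Poinc} we identify $T^{*,\dag}_{f_k}\simeq T^{\dag}_{f_k^\flat}$ $G_\Q$-equivariantly, so that $H^1\bigl(K,T^{*,\dag}_{f_k}\bigr)\simeq H^1\bigl(K,T^{\dag}_{f_k^\flat}\bigr)$; since $K/\Q$ is abelian, hence solvable, Proposition \ref{torsion-free-prop} (applied to the analogous lattice $T^\dag_{f_k}$ and transported via the same chain of isomorphisms) guarantees that this cohomology group is torsion-free over $\cO_{f_k^\flat,\fP}$. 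Thus $y_{k,K}=0$ and $\spe_{k,K}\bigl(\zeta_{\f_U,K}\bigr)=0$. Reducing modulo the maximal ideal of $\cO_L$, the image of $\zeta_{\f_U,K}$ in $H^1\bigl(K,\bar T^{*}_{f_k}\bigr)\simeq H^1\bigl(K,\bar T^{\dag}_{f_k}\bigr)$ is also zero.

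To reach a contradiction I now invoke diagram \eqref{diagram}. Since $k\equiv2\pmod{2(p-1)}$, that diagram shows that reducing $\zeta_{\f_U,K}$ via $\spe_k$ and via $\spe_2$ produces, up to the canonical isomorphisms between $\bar T^{\dag}_{f_k}$, $\bar T^{*}_{f_k}$, $\bar T^{*}_f$ and $\bar T^{\dag}_f$, the same class in $H^1\bigl(K,\bar T^{\dag}_{f}\bigr)$. Applying Theorem \ref{BL-thm} at weight $2$ gives $\spe_{2,K}\bigl(\zeta_{\f_U,K}\bigr)=\ell_2\cdot y_{2,K}$ with $\ell_2\in\bar\Z_p^\times$, so the relevant residual class equals $\bar\ell_2\cdot\bar{y}_{2,K}$. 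By Remark \ref{AJ-Kummer-rem} and the identification $T^\dag_f\simeq\Ta_\fP(A_f)$ from \eqref{V-2-eq}, the class $y_{2,K}$ is the image under the Kummer map of the $f$-isotypical projection of the Heegner point $\alpha_K$, so $\bar{y}_{2,K}$ coincides, in $H^1(K,A_f[\fP])$, with $\bar\delta_{f,K}(\bar\alpha_K)$. By our choice $p\in\Xi_f\subset\mathscr P_f$, $\bar\alpha_K\neq0$, and injectivity of $\bar\delta_{f,K}$ yields $\bar{y}_{2,K}\neq0$; since $\bar\ell_2\in\F_{\Lambda_U}^\times$, this contradicts the vanishing obtained above.

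The delicate point is not the formal argument above but ensuring that each step is legitimate in the finite slope setting. The most sensitive issue is that the whole strategy rests on $\zeta_{\f_U,K}$ living in integral cohomology $H^1\bigl(K,\T^\dag_{\f_U}\bigr)$; this is exactly what \S\ref{integrality-subsubsec} secures unconditionally via \cite[Appendix B]{BL-bis}, allowing us to specialize to the integral lattices $T^{*,\dag}_{f_k}$ and reduce modulo $\mathfrak{m}_{\cO_L}$. A secondary bookkeeping issue is tracking the cascade of $G_\Q$-equivariant isomorphisms \eqref{pr-eq2}, \eqref{Poinc}, and \eqref{V-2-eq} so that the reduction of $y_{2,K}$ is identified with $\bar\delta_{f,K}(\bar\alpha_K)$ up to a unit, and so that torsion-freeness in $H^1\bigl(K,T^\dag_{f_k}\bigr)$ transfers to $H^1\bigl(K,T^\dag_{f_k^\flat}\bigr)$; this is routine once the normalizations of \S\ref{residual-subsubsec-1} are in place.
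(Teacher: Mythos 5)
Your proof is correct and follows essentially the same route as the paper's: it uses the commutative diagram \eqref{diagram}, Theorem \ref{BL-thm} at weights $2$ and $k$, the identification of $y_{2,K}$ with the Kummer image of $\alpha_K$ via Remark \ref{AJ-Kummer-rem}, the residual non-vanishing $\bar\delta_{f,K}(\bar\alpha_K)\neq0$ from the choice $p\in\Xi_f$, and the torsion-freeness of $H^1\bigl(K,T^\dag_{f_k^\flat}\bigr)$ from Proposition \ref{torsion-free-prop}. The only difference is presentational: you argue by contradiction (assume torsion $\Rightarrow$ zero $\Rightarrow$ residual class vanishes $\Rightarrow$ contradiction), while the paper argues directly ($\bar\alpha_K\neq0\Rightarrow$ residual class of $y_{k,K}$ is nonzero $\Rightarrow y_{k,K}\neq0\Rightarrow$ non-torsion by torsion-freeness); these are logically equivalent.
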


\begin{proof} In light of Remark \ref{AJ-Kummer-rem}, one can identify the cohomology classes $y_{2,K}\in H^1\bigl(K,T_f^\dag\bigr)$ and $\delta_{f,K}(\alpha_K)\in H^1\bigl(K,\Ta_\p(A_f)\bigr)$. Furthermore, reduction modulo $\fP$ (equivalently, in weight $2$, modulo $\p$) induces maps 
\[ \varpi_{2,K}:H^1\bigl(K,T_f^\dagger\bigr)\longrightarrow H^1\bigl(K,\bar T_f^\dagger\bigr),\quad\varpi_{k,K}:H^1\Bigl(K,T_{f_k^\flat}^\dagger\Bigr)\longrightarrow H^1\Bigl(K,\bar T_{f_k^\flat}^\dagger\Bigr), \]
the first of which allows one to identify $\varpi_{2,K}(y_{2,K})$ and $\bar\delta_{f,K}(\bar\alpha_K)$. It follows from \eqref{nonzero-eq} that 
\begin{equation} \label{y-nonzero-eq}
\varpi_{2,K}(y_{2,K})\neq0.
\end{equation} 
On the other hand, diagram \eqref{diagram} induces a commutative diagram 
\begin{equation} \label{cohomology-diagram-eq}
\xymatrix@C=41pt@R=25pt{
H^1\bigl(K,T_f^\dag\bigr)\ar[d]^-{\varpi_{2,K}} & H^1\Bigl(K,\T_{\f_U}^\dagger\Bigr)\ar[l]_-{\spe_{2,K}}\ar[r]^-{\spe_{k,K}} & H^1\Bigl(K,T_{f_k^\flat}^\dag\Bigr)\ar[d]^-{\varpi_{k,K}}\\
H^1\bigl(K,\bar{T}_f^\dag\bigr)\ar[rr]_-{\simeq}^-{\overline{\spe}} && H^1\Bigl(K,\bar{T}_{f_k^\flat}^\dag\Bigr)
}
\end{equation}
in which $\spe_{2,K}$ and $\spe_{k,K}$ are the specialization maps from \eqref{specialization-eq} and $\overline\spe$ is an isomorphism. The commutativity of \eqref{cohomology-diagram-eq} and Theorem \ref{BL-thm} yield equalities
\begin{equation} \label{equalities-eq}
\begin{split}
\overline\spe\bigl(\varpi_{2,K}(y_{2,K})\bigr)&= \overline\spe\Bigl(\varpi_{2,K}\bigl(\ell_2^{-1}\cdot\spe_{2,K}(\zeta_{\f_U,K})\bigr)\!\Bigr)\\
 &=\bar{\ell}_2^{-1}\cdot\varpi_{k,K}\bigl(\spe_{k,K}(\zeta_{\f_U,K})\bigr)=\bar{\ell}_2^{-1}\bar{\ell}_k\cdot\varpi_{k,K}(y_{k,K})
\end{split}
\end{equation}
with $\bar{\ell}_2^{-1}\bar{\ell}_k\in\bar\F_p^\times$, and then \eqref{y-nonzero-eq} and \eqref{equalities-eq} show that $y_{k,K}\not=0$. Finally, keeping isomorphism \eqref{pr-eq2} in mind, the $\cO_{\Q_{f_k^\flat},\fP}$-module $H^1\bigl(K,T_{f_k^\flat}^\dagger\bigr)$ is torsion-free by Proposition \ref{torsion-free-prop}, and the proof is complete. \end{proof}

\subsection{On Shafarevich--Tate groups in families} \label{sha-subsec}

Under our running Assumption \ref{main-ass}, we prove a result on algebraic ranks and Shafarevich--Tate groups of specializations of $\hf_U$ for the prime $p\in\Xi_f$ that was chosen in \S \ref{choice-subsubsec}.

\subsubsection{Sweeting's converse theorem}

Recall that the dimension $d_f$ of the abelian variety $A_f$ attached to $f$ is equal to $[\cO_f:\Z]$. Let $\Sha_{p^\infty}(A_f/\Q)$ (respectively, $\Sha_{\p}(A_f/\Q)$) be the $p$-primary (respectively, $\p$-primary) Shafarevich-Tate group of $A_f$ over $\Q$. 

The result we are about to state generalizes converse theorems of, among others, Skinner (\cite[Theorem A]{Skinner}) and W. Zhang (\cite[Theorem 1.5]{zhang-selmer}).

\begin{theorem}[Sweeting] \label{sweeting-thm}
If $\mathrm{rank}_\Z\, A_f(\Q)=d_f$ and $\Sha_{\p}(A_f/\Q)$ is finite, then $r_\an(f)=1$. Conversely, if $r_\an(f)=1$, then $\mathrm{rank}_\Z\, A_f(\Q)=d_f$ and $\Sha_{p^\infty}(A_f/\Q)$ is finite.
\end{theorem}

\begin{proof} Since 
\begin{itemize}
\item $N$ is square-free, 
\item $\bar\rho_{f,\p}$ is (absolutely) irreducible (\emph{cf.} \S \ref{choice-subsubsec}),
\item the image of $\rho_{f,\p}$ contains $\SL_2(\Z_p)$ (\emph{cf.} \eqref{BI-eq} in \S \ref{image-subsubsec}), 
\end{itemize}
this is \cite[Corollary D]{sweeting}. \end{proof}

By Theorem \ref{sweeting-thm}, we can reformulate Assumption \ref{main-ass} as

\begin{assumption} \label{main-ass2}
$\rank_\Z A_f(\Q)=d_f$ and $\#\Sha_{\p}(A_f/\Q)<\infty$.
\end{assumption}

This is the condition that will be used in \S \ref{alg-sha-subsubsec}.

\begin{remark} \label{BSD-rem}
There is an equality $L(A_f,s)=\prod_{\sigma:\Q_f\hookrightarrow\C}L(f^\sigma,s)$, where $L(A_f,s)$ is the $L$-function of $A_f$ and $f^\sigma$ is the conjugate of $f$ under a complex embedding $\sigma$ of $\Q_f$. It is conjectured that $r_\an(f^\sigma)=r_\an(f)$ for all such $\sigma$ (this equality is known to hold if $r_\an(f)\in\{0,1\}$, \emph{cf.} \cite[Ch. V, Corollary 1.3]{GZ}), and then the Birch--Swinnerton-Dyer conjecture leads to the expectation that the rank of $A_f(\Q)$ is divisible by $d_f$. In particular, $d_f$ is conjecturally the smallest non-zero rank that can be attained by $A_f(\Q)$.
\end{remark}

\subsubsection{Algebraic ranks and Shafarevich--Tate groups} \label{alg-sha-subsubsec}

As before, take an admissible weight $k\in U\cap\mathscr W_{N,\cla,4}$ with $k\equiv2\pmod{2(p-1)}$. As in \S \ref{sha-subsubsec}, given a number field $F$, denote by $\Sha_\fP(f_k^\flat/F)$ the $\fP$-primary Shafarevich--Tate group of $f_k^\flat$ over $F$, where $\fP$ is used as a shorthand for $\fP\cap\cO_{f_k^\flat}$ (\emph{cf.} Notation \ref{notation-rem}).

The next result asserts that the finiteness of the $p$-primary Shafarevich--Tate group of $f$ together with the minimal algebraic rank property from Assumption \ref{main-ass2} (\emph{cf.} Remark \ref{BSD-rem}) propagates to certain even weight forms in $\hf_U$.

\begin{theorem} \label{sha-thm}
$r_{\alg,\fP}(f_k^\flat/\Q)=1$ and $\#\Sha_\fP(f_k^\flat/\Q)<\infty$.
\end{theorem}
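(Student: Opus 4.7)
The plan is to propagate the non-triviality of $y_{k,K}$ established in Theorem \ref{heegner-nontrivial-prop} into bounds on both the algebraic rank and the Shafarevich--Tate group over $\Q$, going via the imaginary quadratic field $K$ and descending by the complex conjugation eigenspace decomposition. Concretely, I would proceed as follows.

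First, I would invoke Nekov\'a\v{r}'s Kolyvagin-type Euler system machinery for Heegner cycles on Kuga--Sato varieties (\cite{Nek}). The family $\{y_{k,c}\}_c$ introduced in \S \ref{heegner-classes-subsubsec} satisfies the norm compatibilities of an anticyclotomic Euler system, the bottom class $y_{k,K}$ lies in the Bloch--Kato Selmer group $H^1_f\bigl(K,T_{f_k^\flat}^\dag\bigr)$ and, by Theorem \ref{heegner-nontrivial-prop}, it is non-torsion over $\cO_{f_k^\flat,\fP}$. The big image hypothesis \eqref{BI-eq}, which holds for $p\in\Xi_f$ and transfers to $\fP$, supplies the ``large Galois image'' input needed to run Nekov\'a\v{r}'s argument. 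The conclusion is that $\rank_{\cO_{f_k^\flat,\fP}}\Lambda_{f_k^\flat}(K)=1$ (and so the Selmer $\fP$-corank over $K$ equals $1$) and that $\Sha_\fP\bigl(f_k^\flat/K\bigr)$ is finite.

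Second, I would descend from $K$ to $\Q$ using the action of the non-trivial element $\tau\in\Gal(K/\Q)$. The key observation is that $\varepsilon(f_k^\flat)=\omega_N(\hf_U)=\varepsilon(f)=-1$ by \eqref{epsilon-omega-eq} (the root number is constant in the family and equals $-1$ because of Assumption \ref{main-ass}). Standard properties of Heegner cycles then show that, up to torsion, $\tau$ acts on $y_{k,K}$ with the sign prescribed by $-\varepsilon(f_k^\flat)=+1$, so that the image of $y_{k,K}$ in $\Lambda_{f_k^\flat}(K)\otimes_{\cO_{f_k^\flat,\fP}}\Q_{f_k^\flat,\fP}$ sits in the $+$-eigenspace. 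Via the comparison of \'etale Abel--Jacobi images over $\Q$ and over $K$ worked out in \cite{Vigni}, this $+$-eigenspace is identified with $\Lambda_{f_k^\flat}(\Q)\otimes_{\cO_{f_k^\flat,\fP}}\Q_{f_k^\flat,\fP}$. Hence $y_{k,K}$ actually forces a non-torsion class in $\Lambda_{f_k^\flat}(\Q)$, giving $r_{\alg,\fP}(f_k^\flat/\Q)\geq1$.

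Combining this with the restriction map $\Lambda_{f_k^\flat}(\Q)\hookrightarrow\Lambda_{f_k^\flat}(K)$ (injective on the Abel--Jacobi images by Proposition \ref{torsion-free-prop} applied to the solvable extension $\Q\subset\Q$, eliminating torsion kernels) and the rank $1$ bound over $K$ from the first step, I obtain the reverse inequality $r_{\alg,\fP}(f_k^\flat/\Q)\leq r_{\alg,\fP}(f_k^\flat/K)=1$, whence $r_{\alg,\fP}(f_k^\flat/\Q)=1$. Finiteness of $\Sha_\fP(f_k^\flat/\Q)$ then follows from finiteness of $\Sha_\fP(f_k^\flat/K)$ via the restriction map $\Sha_\fP(f_k^\flat/\Q)\to\Sha_\fP(f_k^\flat/K)$, whose kernel is controlled by cohomology of the solvable extension $K/\Q$ with coefficients in a torsion-free module thanks to Proposition \ref{torsion-free-prop} again.

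The main obstacle I anticipate is the sign-matching step: verifying that the Heegner cycle $y_{k,K}$ really lies, up to torsion, in the correct eigenspace for $\tau$ relative to $\varepsilon(f_k^\flat)=-1$ and then invoking the precise ``comparison of Abel--Jacobi images'' statement from \cite{Vigni} in the higher-weight setting. In weight $2$ this is classical (Heegner points on $A_f$ and the BSD-type eigenspace decomposition), but for weight $k\geq4$ it requires Nekov\'a\v{r}'s description of the Galois action on higher-weight Heegner cycles, and the torsion-freeness supplied by Proposition \ref{torsion-free-prop} is what makes the eigenspace argument yield genuine rank bounds rather than merely rational statements.
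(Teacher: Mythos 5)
Your proposal follows essentially the same route as the paper's: non-triviality of $y_{k,K}$ from Theorem \ref{heegner-nontrivial-prop}, Nekov\'a\v{r}'s Euler-system bounds over $K$ (including finiteness of $\Sha_\fP(f_k^\flat/K)$ via \cite[Theorem 13.1]{Nek}), and descent from $K$ to $\Q$ via the complex-conjugation eigenspace decomposition together with torsion-freeness of $H^1$; the paper delegates the descent and the sign-matching to \cite[Propositions 5.8, 5.10, Theorem 5.27, Lemma 5.26]{Vigni}, whereas you unpack them. One minor imprecision: you invoke Proposition \ref{torsion-free-prop} for the ``solvable extension $\Q\subset\Q$'', when what you mean is to take $F=\Q$ in that proposition to conclude that $H^1(\Q,T^\dagger_{f_k})$ is torsion-free; the substance of the argument is otherwise sound.
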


\begin{proof} Let $K$ be the imaginary quadratic field chosen in \S \ref{heegner-subsubsec}. Using Theorem \ref{heegner-nontrivial-prop} and \cite[Propositions 5.9]{Vigni}, one proceeds as in the proof of \cite[Theorem 5.26]{Vigni} to show that $r_{\alg,\fP}(f_k^\flat/\Q)=1$. Finally, $\Sha_\fP(f_k^\flat/K)$ is finite by \cite[Theorem 13.1]{Nek}, and then the finiteness of $\Sha_\fP(f_k^\flat/\Q)$ follows from \cite[Proposition 5.25]{Vigni}. \end{proof}

\begin{remark}
While \cite{Vigni} deals with the case of Hida (\emph{i.e.}, slope $0$) families, the results in \cite[\S 5.3]{Vigni}, like many others in \cite{Vigni}, are insensitive to the prime $p$ being ordinary or not (actually, they have nothing to do with families of modular forms; rather, they are statements on the arithmetic of a given newform).
\end{remark}

\begin{remark}
Strictly speaking, \cite[Proposition 5.25]{Vigni} concerns a specialization of a Hida family, but the arguments used in the proof of it apply \emph{verbatim} to any newform. 
\end{remark}

\begin{remark}
See \cite[Remark 5.23]{Vigni} for an alternative description of $\Sha_\fP(f_k^\flat/K)$ when $y_{k,K}$ is not torsion.
\end{remark}

The only condition on $p$ is that it belong to $\Xi_f$, whose complement in the set of prime numbers is finite, so Theorem \ref{sha-thm} implies the rank $1$ part of Theorem A.

\begin{remark}
The analogue for Hida families of Theorem \ref{sha-thm} is \cite[Theorem 5.26]{Vigni}.
\end{remark}

\begin{corollary} \label{sha-coro}
$r_{\alg,\fP}(f_k^\flat/\Q)=r_{\min}\bigl(\hf_U\bigr)$.
\end{corollary}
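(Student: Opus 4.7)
The plan is to combine Theorem \ref{sha-thm} with a direct computation of $r_{\min}(\hf_U)$. By Theorem \ref{sha-thm} we already know that $r_{\alg,\fP}(f_k^\flat/\Q) = 1$, so the only thing left to verify is that $r_{\min}(\hf_U) = 1$ under our running hypotheses. This is essentially a matter of unwinding the definitions.

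First I would recall Definition \ref{minimal-rank-def}, which gives $r_{\min}(\hf_U) = (1-\varepsilon(\hf_U))/2$, and Definition \ref{root-number-def}, which identifies $\varepsilon(\hf_U)$ with the common value $\omega_N(\hf_U)$. By \eqref{epsilon-omega-eq} applied to the weight $2$ specialization, this common value equals the root number $\varepsilon(f_2^\flat)$. Since $f$ is a newform of weight $2$ and $f^\sharp$ is its $p$-stabilization through which $\hf_U$ passes, we have $f_2^\flat = f$, and therefore $\varepsilon(\hf_U) = \varepsilon(f)$.

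Next I would invoke Assumption \ref{main-ass}, which states $r_\an(f) = 1$. By congruence \eqref{analytic-congruence-eq}, this forces $\varepsilon(f) = -1$, hence $\varepsilon(\hf_U) = -1$ and
\[
r_{\min}(\hf_U) = \frac{1-(-1)}{2} = 1.
\]
Combining this with Theorem \ref{sha-thm} yields the desired equality $r_{\alg,\fP}(f_k^\flat/\Q) = r_{\min}(\hf_U)$.

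There is no real obstacle here: the corollary is simply the reformulation of Theorem \ref{sha-thm} in the language of Conjecture \ref{main-conj}, packaged so as to highlight that the result provides evidence for that conjecture in the rank $1$ case.
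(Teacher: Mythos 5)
Your proof is correct and takes essentially the same route as the paper: it reduces the corollary to Theorem \ref{sha-thm} after computing $r_{\min}\bigl(\hf_U\bigr)=1$ from $\varepsilon\bigl(\hf_U\bigr)=-1$. The only difference is that you spell out why $\varepsilon\bigl(\hf_U\bigr)=-1$ (tracing it back through \eqref{epsilon-omega-eq}, $f_2^\flat=f$, and Assumption \ref{main-ass}), whereas the paper cites this as already established in \S\ref{analytic-rank-1-subsec}.
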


\begin{proof} Since $\varepsilon\bigl(\hf_U\bigr)=-1$, we have $r_{\min}\bigl(\hf_U\bigr)=1$, and the corollary follows at once from Theorem \ref{sha-thm}. \end{proof}

\section{Analytic ranks in Coleman families} \label{analytic-sec}

In this section, we prove results on the behaviour of analytic ranks of specializations in a Coleman family $\hf_U$ when $r_{\min}\bigl(\hf_U\bigr)\in\{0,1\}$. In particular, our results provide some evidence for Conjecture \ref{greenberg-conj} and, as a consequence, for the more general conjectures formulated by Greenberg in \cite{Greenberg-CRM}. As in Section \ref{sha-sec}, take a prime number $p\in\Xi_f$.

\subsection{The analytic rank one case} \label{analytic-rank-1-subsec}

Here we offer a result towards Conjecture \ref{greenberg-conj} when $r_{\min}\bigl(\hf_U\bigr)=1$. Notice that, as explained in \S \ref{root-numbers-subsubsec}, this equality is guaranteed by the fact that $r_\an(f)=1$ (\emph{cf}. Assumption \ref{main-ass}), which implies that $\varepsilon(f)=-1$.

\subsubsection{Results over $K$} \label{K-subsubsec}

Here we are interested in the analytic ranks of modular forms in our Coleman family $\hf_U$ after base change to the imaginary quadratic field $K$ that we chose in \S \ref{heegner-subsubsec}. As before, the Dirichlet character associated with $K$ will be denoted by $\chi_K$. 

Using arithmetic intersection theory \emph{\`a la} Gillet--Soul\'e (\cite{GS-1}), S.-W. Zhang defined in \cite{Zhang-heights} a pairing between certain CM cycles on Kuga--Sato varieties. Using this pairing, which we denote by ${\langle\cdot,\cdot\rangle}_{g,\GS}$, he proved a formula of Gross--Zagier type for the $L$-function of a higher weight modular form (\cite[Corollary 0.3.2]{Zhang-heights}; \emph{cf.} also \cite[Theorem 6.1]{Vigni}). In the present article, we apply Zhang's result to the specializations $f_k$ of $\hf_U$ (or, rather, to the newforms $f_k^\flat$).

Unfortunately, unlike what happens in weight $2$, the non-degeneracy of ${\langle\cdot,\cdot\rangle}_{g,\GS}$ for a $g$ of arbitrary (higher) weight is currently only conjectural; for example, it is a special case of \cite[Conjecture 2]{GS-2}. As a consequence, our results on analytic ranks will be conditional on the following

\begin{assumption} \label{main-ass3}
The pairing ${\langle\cdot,\cdot\rangle}_{f_k,\GS}$ is non-degenerate for each admissible $k\in U\cap\mathscr W_{N,\cla}^0$.
\end{assumption}

Note that a non-degeneracy condition of this sort is imposed also in \cite[Assumption 6.3]{Vigni} in the study of analytic ranks in Hida families. 

\begin{theorem} \label{quadratic-thm}
If $k\in U\cap\mathscr W_{N,\cla,4}$ is an admissible weight such that $k\equiv2\pmod{2(p-1)}$, then $r_\an(f_k^\flat/K)=1$.
\end{theorem}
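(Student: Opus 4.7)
The plan is to sandwich $r_{\an}(f_k^\flat/K)$ between $1$ and $1$: a parity argument for the lower bound and Zhang's Gross--Zagier type formula for the upper bound.

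For the lower bound, observe first that along our family the root numbers are constant in even weight: by \eqref{epsilon-omega-eq} together with Assumption \ref{main-ass} and \eqref{analytic-congruence-eq}, one has $\varepsilon(f_k^\flat) = \omega_N(\hf_U) = \varepsilon(f) = -1$ for every admissible $k \in U \cap \mathscr W_{N,\cla}^0$. Since $L(f_k^\flat/K,s) = L(f_k^\flat,s)\, L(f_k^\flat \otimes \chi_K, s)$ factors as a product, one has $\varepsilon(f_k^\flat/K) = \varepsilon(f_k^\flat) \cdot \varepsilon(f_k^\flat \otimes \chi_K)$; a standard computation of local root numbers under the Heegner hypothesis (all primes of $N$ split in $K$, trivial nebentype, and the archimedean contribution compensated by $\chi_K(-1) = -1$) then gives $\varepsilon(f_k^\flat/K) = -1$ for every even weight $k$. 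The functional equation centered at $s = k/2$ therefore forces $r_{\an}(f_k^\flat/K)$ to be odd, in particular $\geq 1$.

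For the upper bound, I would invoke Zhang's formula of Gross--Zagier type from \cite{Zhang-heights} (\emph{cf.} \cite[Theorem 6.1]{Vigni}) applied to the newform $f_k^\flat \in S_k^{\new}(\Gamma_0(N))$ of even weight $k \geq 4$ and to the imaginary quadratic field $K$, which reads
\[
L'(f_k^\flat/K, k/2) \;=\; c_k \cdot \langle y_{k,K}, y_{k,K} \rangle_{f_k^\flat,\GS}
\]
up to an explicit non-zero constant $c_k$, where the right-hand side is the Gillet--Soul\'e self-pairing of the $f_k^\flat$-isotypic projection of the Heegner cycle underlying $y_{k,K}$. By Theorem \ref{heegner-nontrivial-prop}, the class $y_{k,K}$ is non-torsion in $H^1(K, T_{f_k^\flat}^\dagger)$ over $\cO_{f_k^\flat, \fP}$; by $\cO_{f_k^\flat,\fP}$-linearity of the \'etale Abel--Jacobi map, the corresponding algebraic cycle is non-torsion in the $f_k^\flat$-isotypic component of the relevant Chow group. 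The non-degeneracy Assumption \ref{main-ass3} then yields $\langle y_{k,K}, y_{k,K} \rangle_{f_k^\flat,\GS} \neq 0$, so $L'(f_k^\flat/K, k/2) \neq 0$ and $r_{\an}(f_k^\flat/K) \leq 1$. Combining the two bounds gives the claimed equality.

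The delicate point is the transition from the cohomological non-triviality result Theorem \ref{heegner-nontrivial-prop} to the non-vanishing of a self-height in an arithmetic Chow group: one has to know both that the \'etale Abel--Jacobi image detects the $f_k^\flat$-isotypic part of the cycle faithfully, and that Assumption \ref{main-ass3} is strong enough to exclude isotropic elements on this component. By contrast, the parity computation $\varepsilon(f_k^\flat/K) = -1$, while conceptually important, is a standard (if tedious) local check that depends essentially on the Heegner hypothesis built into the choice of $K$ in \S \ref{heegner-subsubsec}.
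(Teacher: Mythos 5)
Your proposal is correct and follows essentially the same route as the paper: a parity lower bound $r_\an(f_k^\flat/K)\geq 1$ (which the paper obtains by citing \cite[Proposition 4.2]{Vigni}, where the same root-number computation under the Heegner hypothesis is the underlying content), and the upper bound via Zhang's Gross--Zagier type formula combined with Theorem \ref{heegner-nontrivial-prop} and the non-degeneracy Assumption \ref{main-ass3}. Your unpacking of the lower bound is a bit circuitous --- under the Heegner hypothesis the sign $\varepsilon(f_k^\flat/K)=-1$ holds regardless of the sign of $\varepsilon(f_k^\flat)$, so the preliminary observation that $\varepsilon(f_k^\flat)=-1$ is redundant for this step --- but this does not affect the correctness.
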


\begin{proof} Let $k$ be as in the statement of the theorem. By \cite[Proposition 4.2]{Vigni}, we know that 
\begin{equation} \label{order-K-eq}
r_\an(f_k^\flat/K)\geq1.
\end{equation}   
By the same arguments as in the proof of \cite[Theorem 6.4]{Vigni}, if $r_\an(f_k^\flat/K)>1$, then \cite[Theorem 6.1]{Vigni} and Assumption \ref{main-ass3} force $y_{k,K}$ to be torsion, which contradicts Proposition \ref{heegner-nontrivial-prop}. Thus, it follows from \eqref{order-K-eq} that $r_\an(f_k^\flat/K)=1$. \end{proof}

\subsubsection{Proof of Theorem B, $r_\an=1$} \label{B-1-subsubsec}

Let $k\in U\cap\mathscr W_{N,\cla}^0$ be admissible. As in \S \ref{K-subsubsec}, let $K$ be the imaginary quadratic field, with associated Dirichlet character $\chi_K$, that was fixed in \S \ref{heegner-subsubsec}. The factorization $L(f_k^\flat/K,s)=L(f_k^\flat,s)\cdot L(f_k^\flat\otimes\chi_K,s)$ yields the equality 
\begin{equation} \label{analytic-ranks-eq}
r_\an(f^\flat_k/K)=r_\an(f^\flat_k)+r_\an(f^\flat_k\otimes\chi_K). 
\end{equation}
On the other hand, $r_\an(f^\flat_k/K)=1$ by Theorem \ref{quadratic-thm}, while $r_\an(f^\flat_k\otimes\chi_K)=0$ by condition (b) in \S \ref{heegner-subsubsec}. Therefore, it follows from \eqref{analytic-ranks-eq} that $r_\an(f^\flat_k)=1$, as was to be shown.\hfill\qedsymbol

\subsection{The analytic rank zero case} \label{analytic-0-subsec}

We prove a result in the direction of Conjecture \ref{greenberg-conj} when $r_{\min}\bigl(\hf_U\bigr)=0$ (\emph{cf.} Remark \ref{r-min-rem}). To achieve our goal, we will need to use an auxiliary imaginary quadratic twist of $\hf_U$ (\emph{cf.} \S \ref{B-0-subsubsec}): resorting to an argument of this kind is perhaps novel in this context. We remark that, as in the $r_{\min}\bigl(\hf_U\bigr)=1$ case (\emph{cf.} \S \ref{analytic-rank-1-subsec}), we impose a non-degeneracy condition on certain Gillet--Soul\'e height pairings; in Appendix \ref{appendix} we will sketch how one can use two-variable $p$-adic $L$-functions attached to $p$-adic Coleman families to prove a rank $0$ result in the vein of Theorem B without any extra assumption of the kind above (this is well known for Hida families: see, \emph{e.g.}, \cite[Theorem 7]{Howard-derivatives}).

\subsubsection{The newform $f$ of weight $2$ and choice of $p$} \label{newform-0-subsubsec}

As before, let $f\in S_2^\new(\Gamma_0(N))$ be a normalized newform of weight $2$ and square-free level $N$. Unlike what was done in \S \ref{analytic-assumption-subsubsec}, from here until the end of the paper we make the following

\begin{assumption} \label{main-0-ass}
$r_\an(f)=0$.
\end{assumption}

It follows that, in particular, $\varepsilon(f)=1$. By \cite[p. 543, Theorem, (i)]{BFH} (\emph{cf.} also \cite{MM-derivatives}), there is an imaginary quadratic field $K$, with associated Dirichlet character $\chi_K$, such that
\begin{itemize}
\item[(a)] the primes dividing $N$ split in $K$;
\item[(b)] $r_\an(f\otimes\chi_K)=1$.
\end{itemize}
Fix such a field $K$ and write $D_K$ for the discriminant of $K$. As in \S \ref{heegner-subsubsec},we consider the Heegner point $\alpha_K\in A_f(K)$. Combining Assumption \ref{main-0-ass} and condition (b), we deduce that $r_\an(f/K)=1$; by the Gross--Zagier formula, this is tantamount to $\alpha_K$ being non-torsion. Finally, we define a set $\Xi^0_f$ of prime numbers exactly as the set $\Xi_f$ from \S \ref{choice-subsubsec}.

Now put $M\defeq ND_K^2$ and $g\defeq f\otimes\chi_K\in S_2^\new(\Gamma_0(M))$, so that $r_\an(g)=1$ by condition (b). By \cite[p. 543, Theorem, (ii)]{BFH}, there is an imaginary quadratic field $K'$, with associated Dirichlet character $\chi_{K'}$, such that
\begin{itemize}
\item[(a')] the primes dividing $M$ split in $K'$;
\item[(b')] $r_\an(g\otimes\chi_{K'})=0$.
\end{itemize}
As before, consider the Heegner point $\alpha_{K'}\in A_g(K')$. Combining conditions (b) and (b'), we get $r_\an(g/K')=1$, hence $\alpha_{K'}$ is not torsion. On the other hand, $f$ has no complex multiplication, so $g$ has no complex multiplication as well. Using $g$ instead of $f$ and $K'$ in place of the field $K$ from \S \ref{heegner-subsubsec}, we define, as above, a set $\Xi_g$ of prime numbers by the same recipe as that for the set $\Xi_f$ introduced in \S \ref{choice-subsubsec}. Finally, let
\begin{equation} \label{omega-eq}
\Omega_f\defeq\Xi_f^0\cap\Xi_g; 
\end{equation}
then $\Omega_f$ excludes only finitely many primes.

Pick $p\in\Omega_f$ and, notation being as in \S \ref{stabilizations-subsubsec}, let $\hf_U=\sum_{n\geq1}a_n\bigl(\hf_U\bigr)q^n$ be a $p$-adic Coleman family (of tame level $N$) passing through $f^\sharp$. As usual, write $f_k$ for the specialization of $\hf_U$ at $k$.

\subsubsection{Proof of Theorem B, $r_\an=0$} \label{B-0-subsubsec}

Let us consider the quadratic twist
\[ \hf_U\otimes\chi_K\defeq\sum_{n\geq1}\chi_K(n)a_n(\f_U)q^n\in\Lambda_U[[q]], \]
which is a $p$-adic Coleman family over $U$ of tame level $M$ passing through $g$. Now let $k\in U\cap\mathscr W_{N,\cla}^0$ be admissible; there is an injection $\mathscr W_N\hookrightarrow\mathscr W_M$ (\emph{cf.} Remark \ref{weight-inclusion-rem}), so $k\in U\cap\mathscr W_{M,\cla}^0$. Clearly, the specialization of $\hf_U\otimes\chi_K$ at $k$ is $g_k\defeq f_k\otimes\chi_K$. Moreover, a direct computation shows that $(f_k^\flat\otimes\chi_K)^\sharp=g_k$, hence $g_k^\flat=f_k^\flat\otimes\chi_K$.

In analogy with what we did in \S \ref{K-subsubsec} in analytic rank $1$, we make the following

\begin{assumption} \label{main-ass4}
\begin{enumerate}
\item The pairing ${\langle\cdot,\cdot\rangle}_{f_k,\GS}$ is non-degenerate for each admissible $k\in U\cap\mathscr W_{N,\cla}^0$.
\item The pairing ${\langle\cdot,\cdot\rangle}_{g_k,\GS}$ is non-degenerate for each admissible $k\in U\cap\mathscr W_{M,\cla}^0$.
\end{enumerate}
\end{assumption}

Combining part (1) of Assumption \ref{main-ass4} with the analogue of Theorem \ref{heegner-nontrivial-prop} for our current choice of $\hf_U$, we deduce that $r_\an(f^\flat_k/K)=1$. On the other hand, Theorem \ref{heegner-nontrivial-prop} holds for the Coleman family $\hf_U\otimes\chi_K$ and its specialization $g_k$ as well. Therefore, since $r_\an(g)=1$, the $r_\an=1$ part of Theorem B (\emph{cf.} \S \ref{B-1-subsubsec}) applied to $\hf_U\otimes\chi_K$, which we can invoke thanks to part (2) of Assumption \ref{main-ass4}, ensures that $r_\an(g^\flat_k)=1$. Since $g^\flat_k=f^\flat_k\otimes\chi_K$, the equality
\[ r_\an(f^\flat_k/K)=r_\an(f^\flat_k)+r_\an(g^\flat_k) \]
gives $r_\an(f^\flat_k)=0$, which is equivalent, by \cite[Lemma 2.9]{Vigni}, to $r_\an(f_k)=0$.\hfill\qedsymbol

\begin{remark} \label{r-min-rem}
Under our assumptions on analytic ranks, congruence \eqref{analytic-congruence-eq} gives $\varepsilon(f_k^\flat)=1$. It follows from \eqref{epsilon-omega-eq} that $\varepsilon\bigl(\hf_U\bigr)=\omega_N\bigl(\hf_U\bigr)=1$, and then $r_{\min}\bigl(\hf_U\bigr)=0$.
\end{remark}

\begin{remark}
In Appendix \ref{appendix} we explain how two-variable $p$-adic $L$-functions attached to $\hf_U$ can be used to prove a refined version of the rank $0$ part of Theorem B for which we can drop the congruence condition on the weights and the non-degeneracy assumption on height pairings is not necessary. From our perspective, our strategy to prove this analytic rank $0$ result remains interesting because it bypasses any consideration whatsoever involving $p$-adic $L$-functions.
\end{remark}

\section{Shafarevich--Tate groups in Coleman families: the rank zero case} \label{rank-zero-sec}

In this section, the weight $2$ newform $f$ and the imaginary quadratic field $K$ are chosen as in \S \ref{newform-0-subsubsec}; in particular, we work under Assumption \ref{main-0-ass}. 

\subsection{On Shafarevich--Tate groups in families} \label{sha-subsec2}

We want to prove the counterpart in the rank $0$ case of Theorem \ref{sha-thm}.

\subsubsection{Wan's converse theorem}

The result below slightly generalizes a converse theorem for elliptic curves due to Skinner--Urban (\cite[Theorem 2, (b)]{SU}). Although we were not able to find a reference where its proof is spelled out in detail, this statement is certainly well known to the experts.

\begin{theorem}[Castella--\c{C}iperiani--Skinner--Sprung] \label{ccss-thm}
If $\mathrm{rank}_\Z\, A_f(\Q)=0$ and $\Sha_{\p}(A_f/\Q)$ is finite, then $r_\an(f)=0$. Conversely, if $r_\an(f)=0$, then $\mathrm{rank}_\Z\, A_f(\Q)=0$ and $\Sha_{p^\infty}(A_f/\Q)$ is finite.
\end{theorem}

\begin{proof}[Sketch of proof.] Using \cite[Theorem A]{CCSS}, one obtains an extension of \cite[Theorem 2, (b)]{SU} to our context, from which the theorem follows. \end{proof}

\subsubsection{Algebraic ranks and Shafarevich--Tate groups}

We turn to our main algebraic result in rank $0$ (\emph{cf.} Theorem \ref{sha-thm} for the rank $1$ case). Thanks to Theorem \ref{ccss-thm}, we can reformulate Assumption \ref{main-0-ass} as 

\begin{assumption} \label{main-0-ass2}
$\rank_\Z A_f(\Q)=0$ and $\#\Sha_\p(A_f/\Q)<\infty$.
\end{assumption}

Now let $p\in\Omega_f$, where $\Omega_f$ is the set of primes defined in \eqref{omega-eq}, and take an admissible $k\in U\cap\mathscr W_{N,\cla,4}$ with $k\equiv2\pmod{2(p-1)}$. 

\begin{theorem} \label{sha-thm2}
$r_{\alg,\fP}(f_k^\flat/\Q)=0$ and $\#\Sha_\fP(f_k^\flat/\Q)<\infty$.
\end{theorem}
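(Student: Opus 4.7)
The argument parallels that of Theorem \ref{sha-thm}, but runs the Heegner machinery twice: once for the Coleman family $\hf_U$ itself (relative to the field $K$ from \S \ref{newform-0-subsubsec}) and once for its quadratic twist $\hf_U\otimes\chi_K$ (relative to the auxiliary field $K'$ of \S \ref{newform-0-subsubsec}); the two outputs are then combined through the decomposition of $H^1(K,-)$ induced by complex conjugation.

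First, I would apply the \emph{proof} of Theorem \ref{sha-thm}, rather than its formal statement, to the triple $\bigl(\hf_U\otimes\chi_K,\,g,\,K'\bigr)$ in place of $\bigl(\hf_U,\,f,\,K\bigr)$; running the proof directly sidesteps Sweeting's converse theorem, which would fail for $g$ because its level $M=ND_K^2$ is not square-free. The required inputs all hold: $r_\an(g)=1$ by condition (b) of \S \ref{newform-0-subsubsec}, and $p\in\Omega_f\subseteq\Xi_g$ ensures both that $\alpha_{K'}\in A_g(K')$ is nontorsion (via Gross--Zagier, since $r_\an(g/K')=1$) and that $\bar\alpha_{K'}\in A_g(K')/\p A_g(K')$ is nonzero, while also encoding the big-image and residual absolute irreducibility hypotheses for $g$. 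Proceeding as in Section \ref{sha-sec} — invoking the analogue of Theorem \ref{heegner-nontrivial-prop}, Nekov\'a\v{r}'s Theorem 13.1 of \cite{Nek}, Propositions 5.8 and 5.10 of \cite{Vigni}, and \cite[Lemma 5.26]{Vigni} — yields
\[
r_{\alg,\fP}\bigl(g_k^\flat/\Q\bigr)=1,\qquad \#\Sha_\fP\bigl(g_k^\flat/\Q\bigr)<\infty
\]
for every admissible $k\in U\cap\mathscr W_{N,\cla,4}$ with $k\equiv 2\pmod{2(p-1)}$.

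Second, I would establish the nontriviality of the Heegner class for $f_k^\flat$ over $K$. The equality $r_\an(f/K)=r_\an(f)+r_\an(f\otimes\chi_K)=0+1=1$, combined with Gross--Zagier, makes $\alpha_K\in A_f(K)$ nontorsion, while $p\in\Xi_f^0$ forces $\alpha_K\notin\p A_f(K)$. The interpolation-and-reduction argument of Theorem \ref{heegner-nontrivial-prop} depends only on the nonvanishing of $\bar\alpha_K$ and on the big-image hypothesis for $f$, neither of which requires $r_\an(f)=1$; it therefore applies verbatim in our setting, showing that $y_{k,K}\in H^1\bigl(K,T_{f_k^\flat}^\dagger\bigr)$ is not $\cO_{f_k^\flat,\fP}$-torsion for every admissible $k$ in the range above. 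Then \cite[Theorem 13.1]{Nek} delivers
\[
r_{\alg,\fP}\bigl(f_k^\flat/K\bigr)=1,\qquad \#\Sha_\fP\bigl(f_k^\flat/K\bigr)<\infty.
\]

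Finally, I would descend from $K$ to $\Q$. Since $\chi_K$ restricts trivially to $G_K$ and $g_k^\flat=f_k^\flat\otimes\chi_K$, Shapiro's lemma produces a canonical decomposition
\[
H^1\bigl(K,V_{f_k^\flat}^\dagger\bigr)\;\simeq\;H^1\bigl(\Q,V_{f_k^\flat}^\dagger\bigr)\oplus H^1\bigl(\Q,V_{g_k^\flat}^\dagger\bigr)
\]
compatible with Bloch--Kato local conditions. Combined with the eigenspace identification for \'etale Abel--Jacobi images supplied by Propositions 5.8 and 5.10 of \cite{Vigni}, this yields the additivity
\[
r_{\alg,\fP}\bigl(f_k^\flat/K\bigr)=r_{\alg,\fP}\bigl(f_k^\flat/\Q\bigr)+r_{\alg,\fP}\bigl(g_k^\flat/\Q\bigr),
\]
which, after substituting the two identities above, forces $r_{\alg,\fP}(f_k^\flat/\Q)=0$. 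Finiteness of $\Sha_\fP(f_k^\flat/\Q)$ then follows from finiteness of $\Sha_\fP(f_k^\flat/K)$ via \cite[Lemma 5.26]{Vigni}. The main technical obstacle is the careful parity bookkeeping in Vigni's propositions: unlike in Section \ref{sha-sec}, where the Heegner cycle for $f$ sits in the $+1$ eigenspace of complex conjugation and directly produces $\Lambda_{f_k^\flat}(\Q)$ of rank $1$, here the cycle lies in the $-1$ eigenspace and contributes to $\Lambda_{g_k^\flat}(\Q)$ instead, so the rank-$0$ conclusion for $\Lambda_{f_k^\flat}(\Q)$ emerges only after confronting Nekov\'a\v{r}'s upper bound over $K$ with the lower bound coming from the twisted Coleman family.
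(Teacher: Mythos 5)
Your proposal is correct and follows the same general strategy that the paper outsources to \cite[Theorem 7.4]{Vigni}: show that the Heegner cycle $y_{k,K}$ is non-torsion (the analogue of Theorem \ref{heegner-nontrivial-prop} with $K$ as in \S \ref{newform-0-subsubsec}), invoke Nekov\'a\v{r}'s \cite[Theorem 13.1]{Nek} for rank $1$ and finite Shafarevich--Tate group over $K$, and descend to $\Q$ via the quadratic decomposition and \cite[Lemma 5.26]{Vigni}. The one place you diverge is in running the entire Heegner machinery a second time for the triple $\bigl(\hf_U\otimes\chi_K,\,g,\,K'\bigr)$ to manufacture the lower bound $r_{\alg,\fP}(g_k^\flat/\Q)\geq 1$. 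This is a valid but avoidable detour: the very parity bookkeeping you invoke at the end (which is the content of \cite[Propositions 5.8 and 5.10]{Vigni}) places $y_{k,K}$ in the $-1$ eigenspace of complex conjugation, so its non-triviality already forces $r_{\alg,\fP}(g_k^\flat/\Q)\geq1$; together with Nekov\'a\v{r}'s $r_{\alg,\fP}(f_k^\flat/K)=1$ and additivity, that single class supplies both the upper and lower bounds, and $r_{\alg,\fP}(f_k^\flat/\Q)=0$ falls out without any appeal to the twisted Coleman family or to the field $K'$. Your observation that Sweeting's converse theorem fails for $g$ because $M=ND_K^2$ is not square-free is correct, and you are right that running the \emph{proof} of Theorem \ref{sha-thm} (rather than citing it) sidesteps this; but once the parity argument is used at full strength, that whole leg of the argument can be dropped. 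The paper's requirement $p\in\Omega_f=\Xi_f^0\cap\Xi_g$ (rather than $p\in\Xi_f^0$) is most plausibly there so that Theorems A and B hold over a common set of primes, not because $\Xi_g$ is needed for the algebraic statement.
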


\begin{proof} Using an analogue of Theorem \ref{heegner-nontrivial-prop} in our current setting, one can argue exactly as in the proof of \cite[Theorem 7.4]{Vigni}. \end{proof}

The set $\Omega_f$ excludes only finitely many prime numbers, so Theorem \ref{sha-thm2} implies the rank $0$ part of Theorem A.

\begin{remark}
The counterpart for Hida families of Theorem \ref{sha-thm2} is \cite[Theorem 7.4]{Vigni}.
\end{remark}

\begin{corollary} \label{sha-coro2}
$r_{\alg,\fP}(f_k^\flat/\Q)=r_{\min}\bigl(\hf_U\bigr)$.
\end{corollary}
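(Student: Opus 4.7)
The plan is to deduce this as an immediate consequence of Theorem \ref{sha-thm2} once the root number $\varepsilon(\hf_U)$ is identified. Specifically, I would argue as follows.

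First, I would invoke Remark \ref{r-min-rem}: under Assumption \ref{main-0-ass}, we have $r_\an(f)=0$, so the parity constraint \eqref{analytic-congruence-eq} yields $\varepsilon(f)=1$. Since $f=f^\flat_2$ and the equality \eqref{epsilon-omega-eq} shows that $\varepsilon(f^\flat_k)=\omega_N(\hf_U)$ for every admissible even classical weight $k$, in particular for $k=2$, we deduce $\omega_N(\hf_U)=1$. By Definition \ref{root-number-def} this means $\varepsilon(\hf_U)=1$, and therefore by Definition \ref{minimal-rank-def},
\[
r_{\min}\bigl(\hf_U\bigr)=\frac{1-\varepsilon(\hf_U)}{2}=0.
\]

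Second, for the admissible weight $k\in U\cap\mathscr W_{N,\cla,4}$ with $k\equiv 2\pmod{2(p-1)}$ under consideration, Theorem \ref{sha-thm2} gives $r_{\alg,\fP}(f^\flat_k/\Q)=0$. Combining the two equalities yields $r_{\alg,\fP}(f^\flat_k/\Q)=r_{\min}(\hf_U)$, as desired.

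There is no real obstacle here: the corollary is a purely bookkeeping consequence of identifying the root number of $\hf_U$ from the analytic rank hypothesis on $f$ together with the algebraic rank computation already carried out in Theorem \ref{sha-thm2}. The substantive content (the non-triviality of the Heegner cycle over $K$, the passage from the algebraic rank over $K$ to the one over $\Q$, and the finiteness of the Shafarevich--Tate group) is entirely packaged into Theorem \ref{sha-thm2}, so the proof of the corollary itself reduces to matching the right-hand sides.
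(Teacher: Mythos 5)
Your proof is correct and follows exactly the paper's argument: identify $\varepsilon(\hf_U)=1$ (hence $r_{\min}(\hf_U)=0$) from the analytic rank hypothesis via the constancy of the root number, and then cite Theorem \ref{sha-thm2} for $r_{\alg,\fP}(f^\flat_k/\Q)=0$. You spell out slightly more of Remark \ref{r-min-rem} than the paper's one-line proof does, but the route is the same.
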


\begin{proof} Since $\varepsilon\bigl(\hf_U\bigr)=1$, we have $r_{\min}\bigl(\hf_U\bigr)=0$, and then the corollary is an immediate consequence of Theorem \ref{sha-thm2}. \end{proof}

\appendix

\section{Two-variable $p$-adic $L$-functions and the rank zero case} \label{appendix}

We briefly explain how the two-variable $p$-adic $L$-functions attached to $p$-adic Coleman families by Bella\"iche--Pollack--Stevens (see, \emph{e.g.}, \cite{bellaiche}) and Panchishkin (\cite{Panchishkin-Inventiones}) can be used to obtain a refinement of the rank $0$ part of Theorem B for which we can drop the congruence condition on the weights and the non-degeneracy of height pairings is not needed. While the corresponding statement for Hida families is well known (see, \emph{e.g.}, \cite[Theorem 7]{Howard-derivatives}), no result of this sort for Coleman families seems to be available, so Theorem \ref{main-appendix-thm} fills a gap in the literature. Throughout this appendix, we freely employ terminology and notation from the main body of the paper.

\subsection{Two-variable $p$-adic $L$-functions}

We work in somewhat greater generality than in the rest of the article.

\subsubsection{The setting}

Let $p$ be a prime number, let $N\geq1$ be an integer such that $p\nmid N$ and take a newform $f\in S^\new_{k_0}\bigl(\Gamma_1(N),\omega^i\bigr)$, where $k_0\geq2$ is an integer, $\omega:\F_p^\times\rightarrow\Bmu_{p-1}$ is the Teichm\"uller character and $i\in\Z$. The two roots of the Hecke polynomial of $f$ at $p$ (in the sense of part (1) of Definition \ref{noble-def}; \emph{cf.} also \S \ref{hecke-polynomial-subsubsec} for the $k_0=2$ case) will be denoted by $\alpha$ and $\beta$. We assume that the $p$-stabilization $f_\beta$ of $f$ is noble and has non-critical slope (\emph{i.e.}, $v_p(\beta)<k_0-1$). As before, we consider a $p$-adic Coleman family $\f_U$ passing through $f$ (or, rather, $f_\beta$) defined over some wide open disc $U$ of the weight space $\mathscr W_N$.

\subsubsection{Two-variable $p$-adic $L$-functions and interpolation}

By \cite[Theorem 3]{bellaiche}, for a choice of sign $\pm$ there is a two-variable analytic $p$-adic $L$-function $L_p^\pm(x,\sigma)$, where $x\in U$ and $\sigma:\Z_p^\times\rightarrow\C_p^\times$ is a continuous character ($\C_p$ is, as customary, the completion of $\bar\Q_p$). Let $k\in U\cap\mathscr W_{N,\cla}$ be admissible, let $\phi$ be a character of finite order of $\Z_p^\times$ with $\phi(-1)=\pm1$ and let $j$ be an integer such that $0\leq j\leq k-2$; denote by $\phi t^j$ the character $t \mapsto \phi(t)t^j$. Finally, write $\bar\phi$ for the inverse (\emph{i.e.}, conjugate) character to $\phi$ (see, \emph{e.g.}, \cite[Ch. I, \S 13]{MTT} for further details and standard conventions on $p$-adic characters, which we shall tacitly use in the lines below). The $p$-adic $L$-function $L_p^\pm$ satisfies (\cite[eq. (4) and Theorem 3]{bellaiche}) the interpolation formula
\begin{equation} \label{eq:interpolationL}
L_p^\pm(k,\phi t^j) = c \cdot e_p(\beta_k,\phi t^j) \cdot \frac{m^{j+1}}{(-2\pi i )^j} \cdot \frac{j!}{\tau(\bar\phi)} \cdot \frac{L(f_k^\flat,\bar\phi,j+1)}{\Omega_{f^\flat_k}^\pm},
\end{equation}
where
\begin{itemize}
  \item $c$ is a non-zero scalar depending only on $k$,
  \item $m=p^\nu M$, with $p\nmid M$, is the conductor of $\phi$,
  \item $L(f^\flat_k,\bar\phi,s)$ is the complex $L$-function of $f_k^\flat$ twisted by $\bar\phi$,
  \item $\Omega_{f^\flat_k}^\pm$ are the two archimedean periods of $f^\flat_k$,
  \item $\tau(\bar\phi)$ is the Gauss sum of $\bar\phi$,
  \item $\displaystyle{e_p(\beta_k,\phi t^j)=\frac{1}{(\beta_k)^\nu}\cdot\biggl(1-\frac{\bar\phi(p)\chi_{f^\flat_k}(p)p^{k-j}}{\alpha_k}\biggr)\cdot\biggl(1-\frac{\phi(p)p^j}{\alpha_k}\biggr)}$; here $\alpha_k,\beta_k$ are the roots of the Hecke polynomial of $f_k^\flat$ at $p$.
\end{itemize}
In light of the interpolation formula recalled in \eqref{eq:interpolationL}, we say that $L_p^\pm$ is the two-variable $p$-adic $L$-function attached to $\f_U$. 

\begin{remark}
The reader is invited to compare \eqref{eq:interpolationL} with the formula in \cite[Ch. I, \S 14, Proposition]{MTT}.
\end{remark}

\begin{remark}
For an alternative approach to the construction of the two-variable $p$-adic $L$-function, see \cite[Theorem 0.5]{Panchishkin-Inventiones}.
\end{remark}

\subsubsection{Arithmetic primes and critical character}

Let us consider the subgroup $\Gamma\defeq1+p\Z_p$ of principal units of $\Z_p^\times$, set $\Lambda\defeq\cO_L[[\Gamma]]$ and fix isomorphisms
\begin{equation} \label{Lambda-isom-eq}
\Lambda\overset\simeq\longrightarrow\cO_L[[T]]\overset\simeq\longrightarrow\Lambda_U
\end{equation}
of $\cO_L$-algebras. A continuous homomorphism $\eta:\Lambda_U\rightarrow\bar\Q_p$ is \emph{arithmetic} if the composition $\Gamma\hookrightarrow\Lambda_U^\times\xrightarrow\eta\bar\Q_p^\times$ with the injection $\Gamma\hookrightarrow\Lambda_U^\times$ induced by \eqref{Lambda-isom-eq} has the form $\gamma\mapsto\psi(\gamma)\gamma^{k-2}$ for an integer $k\geq2$ (the \emph{weight} of $\eta$) and a finite order character $\psi$ of $\Gamma$ (the \emph{wild character} of $\eta$). An \emph{arithmetic prime} of $\Lambda_U$ is the kernel of an arithmetic morphism, and the weight $k_\wp$ and the wild character $\psi_\wp$ of an arithmetic prime are defined to be the weight and the wild character, respectively, of the corresponding morphism. The Coleman family $\f_U$ admits specializations $f_\wp$ at the arithmetic primes $\wp$ of $\Lambda_U$ that are defined \emph{mutatis mutandis} as in the case of Hida families (see, \emph{e.g.}, \cite[\S 2.5]{Vigni}). It turns out that if $\wp$ is an arithmetic prime and $\Lambda_{U,\wp}$ is the localization of $\Lambda_U$ at $\wp$, then the field $\mathcal F_\wp\defeq\Lambda_{U,\wp}/\wp\Lambda_{U,\wp}$ is a finite extension of $\Q_p$ containing the Fourier coefficients of $f_\wp$.

Similarly to what is done in \cite{Howard-derivatives} and \cite{Howard-Inv}, we consider a \emph{critical character}
\[ \theta:\Z_p^\times\longrightarrow\Lambda_U^\times \]
such that $\theta^2(t)=[t]$ for $t\in\Z_p^\times$, where the map $[\cdot]:\Z_p^\times\rightarrow\Lambda_U^\times$ has the property that if $\wp$ is an arithmetic prime of $\Lambda_U$, then the composition $[\cdot]_\wp$ of $[\cdot]$ with the canonical map $\Lambda_U^\times\rightarrow\mathcal F_\wp^\times$ is given by $[t]_\wp=\omega(t)^{k_0+i-k_\wp}\psi_\wp(t)t^{k_\wp-2}$. Analogously, denote by $\theta_\wp$ the composition of $\theta$ with $\Lambda_U^\times\rightarrow\mathcal F_\wp^\times$. Now observe that if $k\in U\cap\mathscr W_{N,\cla}^0$ is admissible, then the character of $f_k$ is $\omega^{k_0+i-k}$; as a consequence, $\theta_k(t)=\omega(t)^{(k_0+i-k)/2} t^{(k-2)/2}$ and the modular form
\[ g_k\defeq f_k\otimes\omega^{-(k_0+i-k)/2}\in S_k(\Gamma_0(Np^2)) \]
has trivial character (see, \emph{e.g.}, \cite[Ch. III, Proposition 17, (b)]{Koblitz}).

Finally, following Mazur--Tate--Teitelbaum (\emph{cf.} \cite[Ch. I, \S 15]{MTT}), we say that an admissible $k\in U\cap\mathscr W^0_{N,\cla}$ is a \emph{trivial zero} of $L_p^\pm$ if $e_p(\beta_k,\theta_k)=0$.

\begin{proposition} \label{coro:ranL}
If an admissible $k\in U\cap\mathscr W_{N,\cla}^0$ is not a trivial zero, then $r_\an(g_k)=0$ if and only if $L_p^\pm(k,\theta_k)\neq0$. 
\end{proposition}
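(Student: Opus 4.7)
The plan is to specialize the interpolation formula \eqref{eq:interpolationL} at the critical character $\theta_k$, identify the resulting classical $L$-value with the central critical value of the newform underlying $g_k$, and then compare orders of vanishing. To begin, I would rewrite
\[ \theta_k(t) \;=\; \omega(t)^{(k_0+i-k)/2}\cdot t^{(k-2)/2} \;=\; \phi(t)\cdot t^j \]
with $\phi \defeq \omega^{(k_0+i-k)/2}$ and $j \defeq (k-2)/2$. Since $k$ is an even admissible weight, $j$ is an integer with $0\leq j\leq k-2$; the character $\phi$ is of finite order, with conductor $m$ dividing $p$, so in particular $m\neq 0$ and the Gauss sum $\tau(\bar\phi)$ is non-zero; and $j+1=k/2$ is the centre of the functional equation of $L(f_k^\flat,s)$.

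Substituting these choices into \eqref{eq:interpolationL} produces
\[ L_p^\pm(k,\theta_k) \;=\; C_k\cdot e_p(\beta_k,\theta_k)\cdot \frac{L(f_k^\flat,\bar\phi,k/2)}{\Omega_{f_k^\flat}^\pm}, \]
where $C_k\in\bar\Q_p^\times$ collects the manifestly non-zero factors $c$, $m^{k/2}/(-2\pi i)^{(k-2)/2}$, $((k-2)/2)!$ and $\tau(\bar\phi)^{-1}$, and $\Omega_{f_k^\flat}^\pm\neq 0$ by construction. The next step is the identification $L(f_k^\flat,\bar\phi,s)=L(g_k^\flat,s)$: since $f_k^\flat$ has character $\omega^{k_0+i-k}$ (the same as $f_k$), twisting it by $\bar\phi=\omega^{-(k_0+i-k)/2}$ yields a form of trivial character which, in view of the definition $g_k=f_k\otimes\omega^{-(k_0+i-k)/2}$, is precisely the newform $g_k^\flat$ underlying $g_k$.

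Under the assumption that $k$ is not a trivial zero, $e_p(\beta_k,\theta_k)\neq 0$, so the displayed equality reduces the proposition to the equivalence $L(g_k^\flat,k/2)\neq 0 \iff r_\an(g_k)=0$. This follows from the observation that $g_k$ is a $p$-stabilization of $g_k^\flat$: a direct Euler product computation shows that $L(g_k,s)$ differs from $L(g_k^\flat,s)$ by the factor $1-\alpha\, p^{-s}$, where $\alpha$ is the root of the $p$-Hecke polynomial of $g_k^\flat$ distinct from $\beta_k$, and this factor does not vanish at $s=k/2$ because $|\alpha|=p^{(k-1)/2}\neq p^{k/2}$ by the Ramanujan--Petersson bound. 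Hence $r_\an(g_k)=r_\an(g_k^\flat)=\ord_{s=k/2}L(g_k^\flat,s)$, which settles the biconditional. The main (really only) subtlety, which I would verify carefully, is the twist identification $f_k^\flat\otimes\bar\phi = g_k^\flat$ at the level of newforms rather than merely of $p$-stabilizations, together with the non-vanishing of the Euler factor linking $L(g_k,s)$ and $L(g_k^\flat,s)$ at the central point.
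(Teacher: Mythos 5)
Your proof is correct and takes the same route as the paper, which dispatches the proposition with ``Immediate from formula \eqref{eq:interpolationL}''; you have simply written out the details that make it immediate: the choice $\phi=\omega^{(k_0+i-k)/2}$, $j=(k-2)/2$ in the interpolation formula, the non-vanishing of the auxiliary factors, the identification of $L(f_k^\flat,\bar\phi,\cdot)$ with $L(g_k^\flat,\cdot)$, and the comparison of $r_\an(g_k)$ with $r_\an(g_k^\flat)$ via a non-vanishing Euler factor at $p$. The one small wrinkle worth noting is that when $\bar\phi$ has conductor $p$ (rather than being trivial) the form $g_k$ already equals the newform $g_k^\flat$ of level $Np^2$ and the relating Euler factor is $1$, not $1-\alpha p^{-s}$; in both cases, however, the factor is non-zero at $s=k/2$ (by Deligne's bound when $\bar\phi$ is trivial, and trivially otherwise), which is exactly the point you flag and which suffices for the conclusion.
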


\begin{proof} Immediate from formula \eqref{eq:interpolationL}. \end{proof}

\subsection{The analytic rank zero case} 

Here we prove the refinement of the rank $0$ part of Theorem B that we are interested in.

\subsubsection{A lemma on analytic functions on $U$}

Before turning to the main theorem of the appendix, we need an auxiliary result. Denote by $\mathscr R_U$ the ring of analytic functions on $U$.

\begin{lemma} \label{lemm:zero_affinoid}
Let $h\in\mathscr R_U$ be non-torsion. Then $h(x)\neq0$ for all but finitely many $x\in U$.
\end{lemma}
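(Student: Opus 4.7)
The strategy is to reduce the statement to the Weierstrass preparation theorem for formal power series over $\cO_L$. Since $U$ is a wide open disc defined over $L$, fixing a parameter $T$ on $U$ produces an isomorphism $\Lambda_U \simeq \cO_L[[T]]$, under which the bounded rigid analytic functions on $U$ form the ring $\mathcal{B}_U = \Lambda_U[1/p]$. I would view elements of $\mathscr{R}_U$ via their power series expansions $\sum_{n \geq 0} a_n T^n$ and reduce to working inside $\cO_L[[T]]$.

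First, since $\mathscr{R}_U$ is a domain, the non-torsion hypothesis on $h$ is equivalent to $h \ne 0$. Using that the coefficients $a_n$ lie in $L$, I would rescale by an appropriate power of $p$ so as to realize $h$ as a non-zero element of $\cO_L[[T]]$, with at least one coefficient lying in $\cO_L^\times$. Scaling by a non-zero element of $L$ preserves the zero locus of $h$ on $U$, so this reduction is harmless.

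Second, I would apply the Weierstrass preparation theorem to obtain a factorization $h(T) = u(T) \cdot P(T)$, where $u \in \cO_L[[T]]^\times$ is a unit in the power series ring and $P(T) \in \cO_L[T]$ is a distinguished polynomial of degree $d$ equal to the least index $n$ for which $a_n \in \cO_L^\times$. A unit of $\cO_L[[T]]$ has constant term in $\cO_L^\times$, so its reduction modulo the maximal ideal is a non-zero constant of the residue field; a standard Newton polygon (or direct lifting) argument then shows that $u$ is nowhere vanishing on the whole open disc $U$. Consequently the zero locus of $h$ on $U$ coincides with that of the polynomial $P$, which contains at most $d$ points.

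The only delicate step is the rescaling reduction to $\cO_L[[T]]$: I would want to confirm that, under the convention adopted for $\mathscr{R}_U$, every non-torsion $h$ admits such a representative after multiplication by a power of $p$. This is automatic if $\mathscr{R}_U$ coincides with the ring $\mathcal{B}_U$ of bounded rigid analytic functions, in which case the whole argument goes through immediately; more generally, the non-torsion hypothesis is precisely what prevents the coefficients from having unbounded denominators, and Weierstrass preparation then delivers the conclusion without further obstacles.
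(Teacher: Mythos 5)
Your argument is correct and takes a genuinely different, more hands-on route than the paper's. The paper disposes of the lemma in one line by asserting that $\mathscr R_U$ is noetherian and citing a general statement of Howard about height-one primes over a noetherian domain; you instead unwind what that gives for the specific ring $\Lambda_U[1/p]\simeq\cO_L[[T]][1/p]$ via Weierstrass preparation. The two are two presentations of the same underlying fact: Weierstrass preparation is exactly what makes $\cO_L[[T]]$ a noetherian UFD in which a non-zero element lies in only finitely many height-one primes, and each point $x\in U$ determines such a prime. Your version is more explicit and self-contained, and the Weierstrass step itself is carried out correctly: after rescaling, $h=uP$ with $u$ a unit (whose constant term is in $\cO_L^\times$ and whose higher coefficients are integral, so $|u(x)|=|a_0|>0$ throughout $|x|<1$) and $P$ a distinguished polynomial with at most $\deg P$ roots.

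One remark: the ``more generally'' clause in your final sentence is false and should be dropped. In a domain, non-torsion is synonymous with non-zero, and being non-zero does not bound the denominators of the coefficients. For example $h(T)=\sum_{n\ge 0}p^{-n}T^{p^n}$ is rigid analytic on the open unit disc, non-zero, and cannot be brought into $\cO_L[[T]]$ by multiplying with any power of $p$. In fact, the full ring of (possibly unbounded) rigid analytic functions on a wide open disc is \emph{not} noetherian and contains non-zero elements with infinitely many zeros, so the lemma as stated would fail there. The statement holds, and both your proof and the paper's go through, only under the identification $\mathscr R_U=\mathcal B_U=\Lambda_U[1/p]$; this is precisely what the paper's assertion that $\mathscr R_U$ is noetherian is implicitly relying on. You correctly flag this as the delicate point; just state the identification as the hypothesis rather than claiming it follows from non-torsion.
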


\begin{proof} Since $\mathscr R_U$ is noetherian, this is \cite[Lemma 2.1.7]{Howard-Inv}. \end{proof}

\subsubsection{The analytic rank zero case}

To begin with, let us set
\[ \mathscr U\defeq\bigl\{k\in U\cap\mathscr W_{N,\cla}^0\mid\text{$k$ is admissible and not a trivial zero of $L_p^\pm$}\bigr\}. \]
The result we are about to prove is an analogue in the finite slope setting of the analytic part of \cite[Theorem 7]{Howard-derivatives}.

\begin{theorem} \label{main-appendix-thm}
The following statements are equivalent:
\begin{enumerate}
\item there is $k\in\mathscr U$ such that $r_\an(g_k)=0$;
\item $r_\an(g_k)=0$ for all but finitely many $k\in\mathscr U$.
\end{enumerate} 
\end{theorem}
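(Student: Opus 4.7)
The plan is to package the two-variable $p$-adic $L$-function $L_p^\pm$ into a single-variable rigid analytic function on $U$ by restricting along the critical character, and then to deduce the theorem by combining Proposition~\ref{coro:ranL} with Lemma~\ref{lemm:zero_affinoid}. More precisely, I would construct $\mathcal{L}_p^\pm \in \mathscr{R}_U$ whose value at each admissible $k \in U \cap \mathscr{W}_{N,\cla}^0$ equals $L_p^\pm(k,\theta_k)$. Since the critical character $\theta\colon \Z_p^\times \to \Lambda_U^\times$ already varies $\Lambda_U$-analytically in the weight, the assignment $x \mapsto \theta_x$ defines an analytic section of the projection $U \times \Hom_\cont\bigl(\Z_p^\times,\C_p^\times\bigr) \to U$, and pulling $L_p^\pm$ back along this section gives the desired element of $\mathscr{R}_U$; this is the Coleman-family analogue of the restriction of the Mazur--Kitagawa two-variable $p$-adic $L$-function to the central critical line.

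Once $\mathcal{L}_p^\pm$ is in hand, the direction $(1) \Rightarrow (2)$ goes quickly. By Proposition~\ref{coro:ranL}, the hypothesis means $\mathcal{L}_p^\pm(k_0) \neq 0$ for some $k_0 \in \mathscr{U}$, whence $\mathcal{L}_p^\pm$ is a non-zero element of $\mathscr{R}_U$; since $\mathscr{R}_U$ is an integral domain (reflecting the isomorphism $\Lambda_U \simeq \cO_L[[T]]$ and the identity theorem for analytic functions on a disc), $\mathcal{L}_p^\pm$ is in particular non-torsion. Lemma~\ref{lemm:zero_affinoid} then forces $\mathcal{L}_p^\pm(x) \neq 0$ for all but finitely many $x \in U$, and intersecting this conclusion with $\mathscr{U}$ and re-applying Proposition~\ref{coro:ranL} yields $(2)$. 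The converse $(2) \Rightarrow (1)$ is immediate provided $\mathscr{U}$ is infinite, which follows because non-admissible classical weights form a finite subset of $U \cap \mathscr{W}_{N,\cla}$ by Remark~\ref{slope-rem2}, while the trivial zeros of $L_p^\pm$ are the zeros on $U$ of the non-identically-zero rigid analytic function $x \mapsto e_p(\beta_x, \theta_x)$ and hence form a finite set by Lemma~\ref{lemm:zero_affinoid} as well.

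The hard part will be the rigorous construction of $\mathcal{L}_p^\pm$ as an element of $\mathscr{R}_U$: one needs to interpret the output of \cite{bellaiche} (or \cite{Panchishkin-Inventiones}) as an element of a suitable two-variable rigid analytic (or distribution) algebra over which the weight-varying character $\theta$ acts analytically, so that the diagonal specialization $x \mapsto (x,\theta_x)$ makes sense. In the Hida setting this step is routine thanks to $\Lambda$-adic modular symbols (\emph{cf.}~\cite{Howard-derivatives}); in the Coleman setting it requires working with families of overconvergent distributions and carefully tracking integrality, and this is the technical point that substantiates the claim in the introduction that Theorem~\ref{main-appendix-thm} fills a gap in the literature.
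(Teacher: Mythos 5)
Your argument matches the paper's proof exactly: both restrict the two-variable $p$-adic $L$-function along the critical character to get a rigid analytic function $L_p^\pm(\cdot,\theta)$ on $U$, then combine Proposition~\ref{coro:ranL}, the integrality of $\mathscr R_U$, and Lemma~\ref{lemm:zero_affinoid} to get $(1)\Rightarrow(2)$. The paper simply asserts the analyticity of $x\mapsto L_p^\pm(x,\theta_x)$ in a short note before the proof and treats $(2)\Rightarrow(1)$ as trivial, whereas you flag the analyticity as requiring care and observe explicitly that $(2)\Rightarrow(1)$ requires $\mathscr U$ to be infinite (which your appeal to Remark~\ref{slope-rem2} and Lemma~\ref{lemm:zero_affinoid} correctly supplies); these are sensible small additions but not a different route.
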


Note that the function
\[ L_p^\pm(\cdot,\theta):U\longrightarrow\C_p,\quad x\longmapsto L_p^\pm(x,\theta_x) \]
is analytic over $U$.

\begin{proof}[Proof of Theorem \ref{main-appendix-thm}] Assume there is $k\in\mathscr U$ such that $r_\an(g_k)=0$. Then, by Proposition \ref{coro:ranL}, we have $L_p^\pm(k,\theta_k)\neq0$, hence $L_p^\pm(\cdot,\theta)\neq0$. Since $\mathscr R_U$ is an integral domain, we deduce that $L_p^\pm(\cdot,\theta)$ is not torsion, and the theorem follows from Lemma \ref{lemm:zero_affinoid}. \end{proof}

\begin{remark}
Under the assumptions of Theorem B (\emph{i.e.}, $k_0=2$, $i=0$), if we add the condition $k\equiv2\pmod{2(p-1)}$, then Theorem \ref{main-appendix-thm} provides indeed a refinement of the rank $0$ case of Theorem B.
\end{remark}

\bibliographystyle{amsplain}
\bibliography{Coleman}

\end{document}